\definecolor{darkblue}{rgb}{0.0,0.0,0.6}
\definecolor{darkgreen}{rgb}{0.0,0.6,0.0}
\numberwithin{table}{section}    
\numberwithin{figure}{section}   
\numberwithin{equation}{section} 
\newcommand{\dx}{\textup{d}x}
\newcommand{\sign}{\operatorname{sign}}
\newcommand{\absop}{\operatorname{abs}}
\renewcommand{\supp}{\operatorname{supp}}
\newcommand{\dist}{\operatorname{dist}}
\newcommand{\R}{\mathbb{R}}
\newcommand{\N}{\mathbb{N}}
\newcommand{\BB}{\mathcal{B}}
\newcommand{\II}{\mathcal{I}}
\newcommand{\JJ}{\mathcal{J}}
\renewcommand{\AA}{\mathcal{A}}
\newcommand{\UU}{\mathcal{U}}
\newcommand{\KK}{\mathcal{K}}
\newcommand{\TT}{\mathcal{T}}
\newcommand{\Uad}{U_{\mathrm{ad}}}
\newcommand{\weak}{\rightharpoonup}
\newcommand{\vdual}[2]{\left\langle #1 ,\, #2 \right\rangle}
\newcommand{\Bigdual}[2]{\Big\langle #1 ,\, #2 \Big\rangle}
\newcommand{\dual}[2]{\langle #1 ,\, #2 \rangle}
\newcommand{\embed}{\hookrightarrow}
\newcommand{\OO}{\mathcal{O}}
\newcommand{\ddp}[2]{\frac{\partial #1}{\partial #2}}
\newcommand{\clos}[1]{\overline{#1}}
\newcommand{\interior}{\operatorname{int}}
\newcommand{\capa}{\operatorname{cap}}
\newtheorem{assumption}[theorem]{Assumption}
\newtheorem{remark}[theorem]{Remark}
\newenvironment{proofof}[1]{\emph{Proof of #1.}}{\endproof}
\begin{document}
\title{Strong stationarity conditions for a class of optimization problems governed by variational inequalities of the 2nd kind}
\date{\today}
\author{J.~C.~De Los Reyes\footnotemark[3] \and C.~Meyer\footnotemark[4]}
\renewcommand{\thefootnote}{\fnsymbol{footnote}}
\footnotetext[3]{Research Center on Mathematical Modelling (MODEMAT), Escuela Politécnica Nacional, Quito-Ecuador} 
\footnotetext[4]{Faculty of Mathematics, Technische Universit\"at Dortmund, Dortmund-Germany.}
\renewcommand{\thefootnote}{\arabic{footnote}}

\maketitle
\begin{abstract}
We investigate optimality conditions for optimization problems constrained by a class of variational inequalities of the second kind. Based on a nonsmooth primal-dual reformulation of the governing inequality, the differentiability of the solution map is studied. Directional differentiability is proved both for finite-dimensional and function space problems, under suitable assumptions on the active set. A characterization of B- and strong stationary optimal solutions is obtained thereafter. Finally, based on the obtained first-order information, a trust-region algorithm is proposed for the solution of the optimization problems. 
\end{abstract}
\begin{keywords}
 Variational inequalities, optimality conditions, mathematical programs with equilibrium constraints.
\end{keywords}

\section{Introduction}
Optimization problems with variational inequality constraints have been intensively investigated in the last years with many important applications in focus. Problems in contact mechanics, phase separation or elastoplasticity are some of the most relevant application examples. Special analytical and numerical techniques have been developed for characterizing and finding optima of such problems, mainly in the finite-dimensional case (see \cite{LuoPangRalph} and references therein).

In the function space framework much of the work has been devoted to optimization problems constrained by variational inequalities of the first kind:
\begin{subequations}
\begin{align}
\min ~& j(y,u)\\
\text{subject to: }& (Ay,v-y) \geq (u, v-y), \text{ for all } v \in K,
\end{align}
\end{subequations}
where $A :V \mapsto V^*$ is an elliptic operator and $K \subset V$ is closed convex set. Such obstacle type structure has allowed to develop an analytical machinery for such kind of problems. In addition, different type of stationarity concepts have been investigated in that framework (\emph{C-}, \emph{B-}, \emph{M-} and strong stationary points). The utilized proof techniques include regularization approaches as well as differentiability properties (directional, conic) of the solution map or elements of set valued analysis
(see e.g. \cite{Mignot1976, MignotPuel1984, Barbu1993, Bergounioux1998, hintermuller2009mathematical, OutrataJarusekStara2011, KunischWachsmuth2012a, KunischWachsmuth2012b, HerzogMeyerWachsmuth, SchielaWachsmuth2011, HMS13}).

For problems involving variational inequalities of the second kind: 
\begin{subequations}
\begin{align}
\min ~& j(y,u)\\
\text{subject to: }& (Ay,v-y) +\varphi(v)- \varphi(y) \geq (u, v-y),  \text{ for all } v \in V,
\end{align}
\end{subequations}
with $\varphi$ continuous and convex, only weak results have been obtained in the past, due to the very general structure (see e.g. \cite{Barbu1993,Bergounioux1998,BoCa2,outrata2000}). In \cite{Delosreyes2009} a special class of problems were investigated, where a richer structure of the nondifferentiability was exploited. Nonsmooth terms of the type $\varphi (y)=\int_S |B y|~ds$ were considered there and, by using a tailored regularization approach, a more detailed optimality system was obtained. The results were then extended to problems in fluid mechanics \cite{dlRe2010}, image processing \cite{dlReSchoen2013} and elastoplasticity \cite{dlRHM13}. Thanks to the availability of primal and dual formulations in elastoplasticity, the kind of optimality systems obtained in \cite{Delosreyes2009} were proved to be equivalent to C-stationary optimality systems in optimization problems constrained by variational inequalities of the first kind, see \cite{dlRHM13}.

In this paper we aim to characterize further stationary points by investigating differentiability properties of the solution map. In that spirit B- and strong stationarity conditions are in focus. To avoid problems related to the regularity of the variables, we start by considering the finite-dimensional case. A reformulation of the variational inequality as a nonsmooth system of primal dual equations enables us to take difference quotients and prove directional differentiability of the finite-dimensional solution operator. 

The technique is then extended to the function space setting. Since in this context the regularity of the functions as well as the structure of the active set play a crucial role, special functional analysis and measure theoretical methods have to be considered. As a preparatory step, the Lipschitz continuity of the solution operator from $L^p(\Omega) \to L^\infty(\Omega)$ is proved by using Stampacchia's technique. The directional differentiability of the solution map is then proved by assuming that the active set has a special structure, namely that it consists of the union of a regular subdomain of positive measure and a set of zero capacity (see Assumption \ref{assu:active} below). With the directional differentiability at hand, the characterization of B-stationarity points is carried out thereafter. The theoretical part of the paper ends with the derivation of strong stationarity conditions by 
an adaptation of the method of proof introduced by \cite{MignotPuel1984} for optimal control of the obstacle problem.

In the last part of the paper the first order information related to the directional derivative is utilized within a trust-region algorithm for the solution of the VI-constrai\-ned optimization problem. The computed derivative information is treated as an inexact descent direction, which is inserted into the trust-region framework to get robust iterates. The performance of the resulting algorithm is tested on a representative test problem, showing the suitability of the approach.

\section{Differentiability for a finite dimensional VI of second kind}
\label{sec:finite}

We start by considering the following prototypical VI in $\R^n$:
\begin{equation}\label{eq:vi2}
 \dual{A y}{v-y} + |v|_1 - |y|_1 \geq \dual{u}{v-y} \quad \forall \, v\in \R^n.
\end{equation}
Throughout this section $\dual{.}{.} = \dual{.}{.}_{\R^n}$ denotes the Euclidean scalar product. 
Moreover, $A\in \R^{n\times n}$ is positive definite and $|v|_1 = \sum_{i=1}^n |v_i|$. 
Existence and uniqueness for \eqref{eq:vi2} for arbitrary right hand sides $u\in \R^n$ follows by classical arguments 
due to the maximal monotonicity of $A + \partial |\,.\,|_1$.

\begin{definition}
 We denote the solution mapping associated to \eqref{eq:vi2} by $S: \R^n \ni u \mapsto y \in \R^n$.
\end{definition}

Next let us introduce a dual (slack) variable $q \in \R^n$ by $q := u - A y$.
If we test \eqref{eq:vi2} with $v_i = 0$, $v_i = 2 y_i$, and $v_i = q_i + y_i$ and $v_j = y_j$ for all $j\neq i$, 
then the following complementarity-like equivalent problem is obtained:
\begin{equation*}
 \left\{\;
 \begin{aligned}
  A y + q &= u &&\\
  q_i y_i &= |y_i|, \quad && i = 1, 2, ..., n \\ 
  |q_i| &\leq 1, \quad && i = 1, 2, ..., n, 
 \end{aligned}
 \right.
\end{equation*}
which can be reformulated as the following system of nonsmooth equations
\begin{equation}\label{eq:complsys}
 \left\{\;
 \begin{aligned}
  A y + q &= u &&\\
  q_i y_i &= |y_i|, \quad && i = 1, 2, ..., n \\ 
  \max\{|q_i|,1\} &= 1, \quad && i = 1, 2, ..., n.
 \end{aligned}
 \right.
\end{equation}
In order to derive a directional derivative for $S$, consider a perturbed version of \eqref{eq:vi2}, given by
\begin{equation}\label{eq:pert}
\begin{aligned}
 A y^t + q^t &= u + t\,h\\
 q^t_i y^t_i &= |y^t_i|, \quad i = 1, 2, ..., n \\ 
 \max\{|q^t_i|,1\} &= 1, \quad i = 1, 2, ..., n, 
\end{aligned}
\end{equation}
which leads to the following nonsmooth system for the difference quotient:
\begin{equation}\label{eq:diffquot}
\begin{aligned}
 A\, \frac{y^t - y}{t} + \frac{q^t - q}{t} &= h\\
 \frac{q^t_i y^t_i - q_i y_i -(|y^t_i| - |y_i|)}{t} &= 0, \quad i = 1, 2, ..., n \\ 
 \frac{\max\{|q^t_i|,1\} - \max\{|q_i|,1\}}{t} &= 0, \quad i = 1, 2, ..., n.
\end{aligned}
\end{equation}
In the sequel, we will pass to the limit in \eqref{eq:diffquot} to obtain the relations determining the directional derivative of $S$. 
For this purpose we test the VI associated with \eqref{eq:pert}, given by
\begin{equation}\label{eq:vi2pert}
 \dual{A y^t}{v-y^t} + |v|_1 - |y^t|_1 \geq \dual{u + t h}{v-y^t} \quad \forall \, v\in \R^n,
\end{equation}
with $v=y$. If we test \eqref{eq:vi2} with $v=y^t$ and add both inequalities, we arrive at
\begin{equation*}
 \lambda_{\min}(A) \Big|\frac{y^t - y}{t}\Big|^2 
 \leq \vdual{\frac{y^t - y}{t}}{A\, \frac{y^t - y}{t}} \leq \vdual{h}{\frac{y^t - y}{t}},
\end{equation*}
where $|\,.\,| = |\,.\,|_{\R^n}$ denotes the euclidian norm  and $\lambda_{\min}(A) > 0$ is the smallest eigenvalue of $A$. Thus
\begin{equation*}
 \Big|\frac{y^t - y}{t}\Big| \leq \frac{1}{\lambda_{\min}(A)}\, |h| < \infty,
\end{equation*} 
and so there exists a converging subsequence, w.l.o.g.\ $\left\{\frac{y^t - y}{t}\right\}_{t>0}$ itself, such that
\begin{equation}\label{eq:convy}
 \frac{y^t - y}{t} \stackrel{t \searrow 0}{\longrightarrow} \eta.
\end{equation}
In Theorem \ref{thm:rablfinite} below we will see that the limit $\eta$ is unique so that the whole sequence $\{(y^t - y)/t\}$ converges. 
This justifies to assume the convergence of the whole sequence right from the beginning. By definition of $q$ we have
\begin{equation}\label{eq:convq}
 \frac{q^t - q}{t} = h  - A \, \frac{y^t - y}{t} \stackrel{t \searrow 0}{\longrightarrow} h - A \eta =: \lambda,
\end{equation}
which in particular implies $q^t \to q$.

\begin{lemma}\label{lem:rablabs}
 For all $i=1, 2, ..., n$ there holds
 \begin{equation}
  \frac{q^t_i y^t_i - q_i y_i -(|y^t_i| - |y_i|)}{t} \stackrel{t \searrow 0}{\longrightarrow} 
  \lambda_i y_i + q_i \eta_i - f_i(\eta_i)
 \end{equation}
 with 
 \begin{equation*}
  f_i: \R\to \R, \quad f_i(x) := 
  \begin{cases}
   \sign(y_i)x, & y_i \neq 0,\\
   |x|, & y_i = 0.
  \end{cases}
 \end{equation*}
\end{lemma}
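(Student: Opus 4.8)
The plan is to split the difference quotient into the bilinear contribution stemming from the term $q_i y_i$ and the contribution of the modulus $|y_i|$, and to pass to the limit in each separately, using the convergences \eqref{eq:convy} and \eqref{eq:convq} already at hand.

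First I would treat the bilinear part by inserting $\pm\, q_i y^t_i$, which gives
\begin{equation*}
\frac{q^t_i y^t_i - q_i y_i}{t} = \frac{q^t_i - q_i}{t}\, y^t_i + q_i\, \frac{y^t_i - y_i}{t}.
\end{equation*}
Here $(q^t_i - q_i)/t \to \lambda_i$ by \eqref{eq:convq}, $y^t_i \to y_i$ as a consequence of \eqref{eq:convy}, and $(y^t_i - y_i)/t \to \eta_i$ by \eqref{eq:convy}; hence this part tends to $\lambda_i y_i + q_i \eta_i$.

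It then remains to evaluate the limit of the modulus difference quotient $(|y^t_i| - |y_i|)/t$, and this is where the directional derivative $f_i$ of $|\,\cdot\,|$ enters via a case distinction on $y_i$. If $y_i \neq 0$, then since $y^t_i \to y_i$ the sign of $y^t_i$ agrees with $\sign(y_i)$ for all sufficiently small $t$, so $(|y^t_i| - |y_i|)/t = \sign(y_i)\,(y^t_i - y_i)/t \to \sign(y_i)\,\eta_i = f_i(\eta_i)$. If $y_i = 0$, then $|y_i| = 0$ and, using crucially that $t > 0$, I may draw $t$ into the modulus to obtain $|y^t_i|/t = |(y^t_i - y_i)/t| \to |\eta_i| = f_i(\eta_i)$. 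Subtracting this from the bilinear limit yields the claimed value $\lambda_i y_i + q_i \eta_i - f_i(\eta_i)$.

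The only genuinely delicate point is the case $y_i = 0$, where $x \mapsto |x|$ fails to be differentiable and no linearization is available. The positivity of the increment $t$ (recall the directional derivative is taken for $t \searrow 0$) is what allows the limit to be identified as $|\eta_i|$, and it is precisely this term that renders the resulting directional derivative nonsmooth in the direction $\eta$.
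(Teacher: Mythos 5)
Your proof is correct and follows essentially the same route as the paper: split the difference quotient into the bilinear part $q^t_i y^t_i - q_i y_i$ and the modulus part, and pass to the limit in each using \eqref{eq:convy} and \eqref{eq:convq}. The only cosmetic differences are your choice of cross terms in the product rule (the paper writes $y_i\,(q^t_i - q_i)/t + q^t_i\,(y^t_i - y_i)/t$ rather than your $y^t_i\,(q^t_i - q_i)/t + q_i\,(y^t_i - y_i)/t$) and your direct case distinction for $(|y^t_i| - |y_i|)/t$ --- sign stabilization when $y_i \neq 0$, pulling $t>0$ into the modulus when $y_i = 0$ --- where the paper instead inserts the comparison term $|y_i + t\eta_i|$, bounds the discrepancy via the reverse triangle inequality, and invokes the directional derivative of $|\,\cdot\,|$ at $y_i$; both treatments are equally valid.
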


\begin{proof}
 We start by estimating
 \begin{equation*}
 \begin{aligned}
  &\Big|\frac{q^t_i y^t_i - q_i y_i -(|y^t_i| - |y_i|)}{t} - \lambda_i y_i - q_i \eta_i + f_i(\eta_i)\Big|\\
  &\quad \leq 
  \begin{aligned}[t]
   \Big|\Big(\frac{q^t - q}{t} - \lambda_i\Big) y_i\Big| 
   &+ \Big|q^t_i\,\frac{y^t - y}{t} - q_i \eta_i \Big|\\
   &+ \Big|\frac{|y^t_i| - |y_i + t\eta_i|}{t}\Big| + \Big| \frac{|y_i + t\eta_i| - |y_i|}{t} - f_i(\eta_i)\Big|.
  \end{aligned}   
 \end{aligned}
 \end{equation*}
 Because of \eqref{eq:convy}, \eqref{eq:convq}, and $q^t \to q$, the first two terms converge to zero. 
 Moreover, due to \eqref{eq:convy}, it holds
 \begin{equation*}
  \frac{y^t_i - y_i}{t} = \eta_i + \OO(t)
 \end{equation*}
 and thus the strong triangle inequality gives
 \begin{equation*}
  \Big|\frac{|y^t_i| - |y_i + t\eta_i|}{t}\Big|
  \leq \Big|\frac{y^t_i - y_i - t\eta_i}{t}\Big| = \OO(t) \to 0.
 \end{equation*}
 Moreover, $f_i$ is just the directional derivative of $|\,.\,|: \R \to \R$ at $y_i$ so that
 \begin{equation*}
  \frac{|y_i + t\eta_i| - |y_i|}{t} - f_i(\eta_i) \stackrel{t\searrow 0}{\longrightarrow} 0.
 \end{equation*}
 Altogether this implies the assertion.
\end{proof}

\begin{lemma}\label{lem:rablmax}
 The function $g: \R \to \R$, $g(x) = \max\{|x|, 1\}$ is directionally differentiable with 
 \begin{equation}\label{eq:rablmax}
  g'(x;h) = 
  \begin{cases}
   0, & |x| < 1\\
   \sign(x)h, & |x| > 1\\
   \max\{0, x h\}, & |x| = 1.
  \end{cases}
 \end{equation}
\end{lemma}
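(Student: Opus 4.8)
The plan is to prove directional differentiability of $g(x) = \max\{|x|, 1\}$ at each point $x$ in direction $h$ by computing the limit of the difference quotient
\begin{equation*}
 g'(x;h) = \lim_{t \searrow 0} \frac{g(x + th) - g(x)}{t}
\end{equation*}
directly, splitting into the three cases $|x| < 1$, $|x| > 1$, and $|x| = 1$. The function $g$ is a composition of the absolute value with a max, so the key structural observation is that $g$ is smooth away from the ``kink'' locations $x = 0$ and $|x| = 1$, and that only the boundary case $|x| = 1$ requires genuine care.

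First I would dispose of the interior cases. When $|x| < 1$, continuity of the map $s \mapsto |s|$ guarantees $|x + th| < 1$ for all sufficiently small $t > 0$, whence $g(x + th) = 1 = g(x)$ and the difference quotient is identically zero, giving $g'(x;h) = 0$. When $|x| > 1$, again for small $t > 0$ we have $|x + th| > 1$, so $g$ agrees locally with the smooth function $|\,\cdot\,|$; since $x \neq 0$ here, $|\,\cdot\,|$ is classically differentiable at $x$ with derivative $\sign(x)$, yielding $g'(x;h) = \sign(x)\,h$.

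The main obstacle, and the only case with real content, is $|x| = 1$. Here I would treat $x = 1$ and $x = -1$ and exploit the evenness of $g$; by symmetry it suffices to analyze $x = 1$. For $x = 1$ and small $t > 0$ one has $x + th = 1 + th > 0$, so $|x + th| = 1 + th$ and hence $g(1 + th) = \max\{1 + th, 1\}$. Subtracting $g(1) = 1$ and dividing by $t$ gives $\max\{h, 0\}$, so that $g'(1;h) = \max\{0, h\}$. Writing this as $\max\{0, x h\}$ recovers the stated formula at $x = 1$ since $x = 1$. For $x = -1$, the analogous computation with $|{-1} + th| = 1 - th$ for small $t$ produces $g'(-1;h) = \max\{0, -h\} = \max\{0, xh\}$, matching the formula. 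This confirms the unified expression $\max\{0, xh\}$ on $|x| = 1$ and completes the three cases; assembling them establishes directional differentiability of $g$ together with the claimed representation \eqref{eq:rablmax}.
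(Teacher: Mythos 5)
Your proof is correct and takes essentially the same route as the paper: a direct evaluation of the difference quotient with the identical three-way case split on $|x|<1$, $|x|>1$, and $|x|=1$, the first two cases being handled verbatim as in the paper. The only (cosmetic) difference is in the boundary case, where you split on $x=\pm 1$ and read off $\max\{0,xh\}$ in closed form from the identity $\max\{1+th,1\}-1=\max\{th,0\}$, whereas the paper distinguishes the signs of $hx$ to get a two-branch formula and then verifies separately that it coincides with $\max\{0,xh\}$ — your version is marginally more streamlined, since the difference quotient is even constant in $t$ for small $t>0$.
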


\begin{proof}
Let us define 
 \begin{equation*}
  B_t := \frac{\max\{|x+ th|, 1\} - \max\{|x|,1\}}{t}.
 \end{equation*}
 If $|x| < 1$, then $|x+th| < 1$ and thus $B_t = 0$ for sufficiently small $t>0$. 
 If $|x| > 1$, then $|x+th| > 1$ for sufficiently small $t>0$ and thus 
 \begin{equation*}
  B_t = \frac{|x + th| - |x|}{t} \stackrel{t\searrow 0}{\longrightarrow} \sign(x)h.
 \end{equation*}  
 If $|x| = 1$, then a simple distinction of cases shows that for $t> 0$ sufficiently small
 \begin{equation*}
  B_t  
  = \begin{cases}
   0, & h x < 0\\
   \sign(x) h, & h x \geq 0.
  \end{cases}
 \end{equation*}
 The right hand side is equivalent to $\max\{0,x h\}$ as we will see in the following.
 This is clear for $h x < 0$. If $h x = 0$, then $h = 0$ (since $|x| = 1$) and thus $\max\{0,x h\} = 0 = \sign(x) h$. 
 If $h x > 0$, then 
 \begin{equation*}
  \max\{0,x h\} = x h = |x| |h| = |h| = \sign(h) h = \sign(x) h.
 \end{equation*}
 All in all we have proven the assertion.
\end{proof}

\begin{lemma}\label{lem:rablmax2}
 For every $i=1, 2, ..., n$ there holds
 \begin{equation*}
  \frac{\max\{|q^t_i|,1\} - \max\{|q_i|,1\}}{t} \stackrel{t\searrow 0}{\longrightarrow} g'(q_i;\lambda_i)
 \end{equation*}
 with $g_i'$ as defined in \eqref{eq:rablmax}.
\end{lemma}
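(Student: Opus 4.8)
The plan is to proceed exactly as in the proof of Lemma~\ref{lem:rablabs}, exploiting two structural properties of $g(x) = \max\{|x|,1\}$: first, $g$ is globally Lipschitz continuous with constant $1$, being the pointwise maximum of the two $1$-Lipschitz functions $x \mapsto |x|$ and $x \mapsto 1$; and second, $g$ is directionally differentiable with derivative $g'(q_i;\lambda_i)$ by Lemma~\ref{lem:rablmax}. The key observation is that \eqref{eq:convq} furnishes $\frac{q^t_i - q_i}{t} \to \lambda_i$, so that, up to a term that is small relative to $t$, the argument $q^t_i$ behaves like the linearized point $q_i + t\lambda_i$.

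Accordingly, I would introduce the intermediate point $q_i + t\lambda_i$ and decompose
\begin{equation*}
 \frac{\max\{|q^t_i|,1\} - \max\{|q_i|,1\}}{t}
 = \frac{g(q^t_i) - g(q_i + t\lambda_i)}{t} + \frac{g(q_i + t\lambda_i) - g(q_i)}{t}.
\end{equation*}
The second summand on the right-hand side is precisely the difference quotient defining the directional derivative of $g$ at $q_i$ in direction $\lambda_i$, and hence converges to $g'(q_i;\lambda_i)$ as $t \searrow 0$ by Lemma~\ref{lem:rablmax}. For the first summand, the $1$-Lipschitz continuity of $g$ yields
\begin{equation*}
 \Big| \frac{g(q^t_i) - g(q_i + t\lambda_i)}{t} \Big|
 \leq \frac{|q^t_i - q_i - t\lambda_i|}{t}
 = \Big| \frac{q^t_i - q_i}{t} - \lambda_i \Big|,
\end{equation*}
which tends to zero by \eqref{eq:convq}. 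Combining the two contributions gives the asserted limit.

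There is no genuine obstacle in this argument; the only point that requires a little care is that \eqref{eq:convq} provides mere convergence $\frac{q^t_i - q_i}{t} \to \lambda_i$ rather than a quantitative rate, as was available for $(y^t_i - y_i)/t$ in Lemma~\ref{lem:rablabs}. This is, however, immaterial: the Lipschitz estimate bounds the error term directly by $\big|\frac{q^t_i - q_i}{t} - \lambda_i\big|$, which vanishes on its own, so no rate is needed. In effect, the combination of Lipschitz continuity and (G\^{a}teaux) directional differentiability amounts to Hadamard directional differentiability of $g$, which is exactly what licenses the insertion of the converging difference quotient $\frac{q^t_i - q_i}{t}$ in place of the fixed direction $\lambda_i$.
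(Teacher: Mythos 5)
Your proof is correct and follows essentially the same route as the paper: the identical decomposition through the intermediate point $q_i + t\lambda_i$, the Lipschitz continuity of $g$ to kill the first summand, and Lemma \ref{lem:rablmax} for the second. Your closing remark is in fact a small sharpening of the paper's presentation, which writes $(q_i^t - q_i)/t = \lambda_i + \OO(t)$ although \eqref{eq:convq} only provides plain convergence --- as you correctly observe, that is all the Lipschitz estimate requires.
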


\begin{proof}
 The proof is similar to the one of Lemma \ref{lem:rablabs}. We estimate
 \begin{multline*}
  \Big|\frac{\max\{|q^t_i|,1\} - \max\{|q_i|,1\}}{t} - g'(q_i;\lambda_i)\Big| \\
  \leq \Big|\frac{\max\{|q^t_i|,1\} - \max\{|q_i + t \lambda_i|,1\}}{t}\Big| \\
  + \Big|\frac{\max\{|q_i + t \lambda_i|,1\} - \max\{|q_i|,1\}}{t} - g'(q_i;\lambda_i)\Big|.
 \end{multline*}
 The second addend tends to zero due to Lemma \ref{lem:rablmax}. 
 Moreover, by \eqref{eq:convq} we have $(q_i^t - q_i)/t = \lambda_i + \OO(t)$ and thus we find for the first addend, 
 by employing the Lipschitz continuity of $x\mapsto \max\{1,x\}$ and again the strong triangle inequality,
 \begin{equation*}
  \Big|\frac{\max\{|q^t_i|,1\} - \max\{|q_i + t \lambda_i|,1\}}{t}\Big| 
  \leq \Big|\frac{|q^t_i| - |q_i + t \lambda_i|}{t}\Big| 
  \leq \Big| \frac{q^t_i - q_i - t \lambda_i}{t} \Big|
  \leq \OO(t) \to 0,
 \end{equation*}
 which gives the assertion.
\end{proof}

In view of \eqref{eq:convy}, \eqref{eq:convq}, and lemmas \ref{lem:rablabs} and \ref{lem:rablmax2}, we can pass to the limit as
$t\searrow 0$ in \eqref{eq:diffquot} and obtain in this way:
\begin{subequations}\label{eq:rablcomplsys}
\begin{align}
 A \eta + \lambda &= h\\
 \lambda_i y_i + q_i \eta_i &= 
 \begin{cases}
  \sign(y_i)\eta_i, & y_i \neq 0,\\
  |\eta_i|, & y_i = 0,
 \end{cases}
 \qquad i = 1, 2, ..., n \label{eq:rablb}\\
 \max\{0,q_i\lambda_i\} &= 0 \quad \text{for all } i \in \{1, ..., n\} \text{ with } |q_i| = 1. \label{eq:rablc}
\end{align}
\end{subequations}
(Note that the case $|q_i|> 1$ is obsolete.) The system \eqref{eq:rablcomplsys} will lead to a VI satisfied by the limit $\eta$. 
To see this, we have to reformulate \eqref{eq:rablcomplsys} in the following way:

\begin{lemma}
 The system \eqref{eq:rablcomplsys} is equivalent to
 \begin{subequations}\label{eq:rablcomplsys2}
 \begin{align}
  A \eta + \lambda &= h \label{eq:rabl2a}\\
  \lambda_i &= 0 \quad \text{for all } i \in \{1, ..., n\} \text{ with } y_i \neq 0 \label{eq:rabl2b}\\
  \eta_i &= 0 \quad \text{for all } i \in \{1, ..., n\} \text{ with } |q_i| < 1 \label{eq:rabl2c}\\
  \eta_i q_i &\geq 0 \quad \text{for all } i \in \{1, ..., n\} \text{ with } y_i = 0,\; |q_i| = 1 \label{eq:rabl2d}\\
  \lambda_i q_i &\leq 0 \quad \text{for all } i \in \{1, ..., n\} \text{ with } y_i = 0,\; |q_i| = 1. \label{eq:rabl2e}
 \end{align}
 \end{subequations}
\end{lemma}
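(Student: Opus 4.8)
The plan is to establish the equivalence index by index, exploiting the complementarity relations from \eqref{eq:complsys} that are satisfied by the unperturbed pair $(y,q)$, namely $q_i y_i = |y_i|$ and $|q_i|\le 1$. Since the first line \eqref{eq:rabl2a} coincides verbatim with the first equation of \eqref{eq:rablcomplsys}, it suffices to show that \eqref{eq:rablb}--\eqref{eq:rablc} are equivalent to \eqref{eq:rabl2b}--\eqref{eq:rabl2e} componentwise. The relations $q_i y_i = |y_i|$ and $|q_i|\le 1$ partition the indices into three mutually exclusive, exhaustive cases: (i) $y_i\neq 0$, which forces $q_i=\sign(y_i)$ and hence $|q_i|=1$; (ii) $y_i=0$, $|q_i|<1$; and (iii) $y_i=0$, $|q_i|=1$. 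The parenthetical remark that $|q_i|>1$ is obsolete is precisely the bound $|q_i|\le 1$.

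In case (i) I would substitute $q_i=\sign(y_i)$ into \eqref{eq:rablb}; the term $q_i\eta_i$ then cancels the right-hand side $\sign(y_i)\eta_i$, leaving $\lambda_i y_i=0$, and since $y_i\neq 0$ this is exactly $\lambda_i=0$, i.e.\ \eqref{eq:rabl2b}. Condition \eqref{eq:rablc} then reduces to $q_i\lambda_i\le 0$, which holds trivially, while \eqref{eq:rabl2c}--\eqref{eq:rabl2e} do not apply to this index. In case (ii), equation \eqref{eq:rablb} becomes $q_i\eta_i=|\eta_i|$; combining this with $|\eta_i|=q_i\eta_i\le|q_i|\,|\eta_i|$ and $|q_i|<1$ forces $\eta_i=0$, which is \eqref{eq:rabl2c}, and \eqref{eq:rablc} is vacuous here since it is active only for $|q_i|=1$. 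Conversely, $\eta_i=0$ clearly solves $q_i\eta_i=|\eta_i|$, so the two descriptions agree.

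The borderline case (iii) is the only one where the equivalence is not immediate, and I expect it to be the crux of the argument. Here $|q_i|=1$ reduces \eqref{eq:rablb} to $q_i\eta_i=|\eta_i|$, and since $q_i=\pm 1$ this identity holds if and only if $\eta_i$ and $q_i$ have the same sign, that is $q_i\eta_i\ge 0$, which is \eqref{eq:rabl2d}; the converse direction uses $q_i\eta_i\ge 0 \Rightarrow q_i\eta_i=|q_i\eta_i|=|q_i|\,|\eta_i|=|\eta_i|$. Simultaneously, \eqref{eq:rablc} is active for exactly these indices and states $\max\{0,q_i\lambda_i\}=0$, which is equivalent to $q_i\lambda_i\le 0$, i.e.\ \eqref{eq:rabl2e}. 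Collecting the three cases yields the claimed equivalence; the delicate points are the sign-characterization of $q_i\eta_i=|\eta_i|$ when $|q_i|=1$ and carefully tracking which of the conditions in each system are active in each case.
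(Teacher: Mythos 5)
Your proof is correct and follows essentially the same route as the paper's: both arguments rest on the partition of indices into the three cases $y_i\neq 0$ (forcing $q_i=\sign(y_i)$), $y_i=0$ with $\abs{q_i}<1$, and the biactive case, together with the substitution $q_i=\sign(y_i)$, the inference $q_i\eta_i=\abs{\eta_i}$ with $\abs{q_i}<1$ $\Rightarrow$ $\eta_i=0$, and the equivalence $\max\{0,q_i\lambda_i\}=0 \Leftrightarrow q_i\lambda_i\leq 0$. The only difference is organizational — you verify the biconditional case by case, whereas the paper proves the two implications sequentially — which does not change the substance.
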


\begin{proof}
 $\eqref{eq:rablcomplsys} \Rightarrow \eqref{eq:rablcomplsys2}$:\\
 It is evident that 
 \begin{equation}\label{eq:maxequiv}
  \max\{0,q_i\lambda_i\} = 0 \text{ if } |q_i| = 1 
  \quad \Longleftrightarrow \quad 
  q_i\lambda_i \leq 0 \text{ if } |q_i| = 1,
 \end{equation}
 which implies \eqref{eq:rabl2e}. Next, let $i\in \{1, ..., n\}$ such that $y_i \neq 0$. 
 Then
 \begin{equation*}
  q_i = \frac{y_i}{|y_i|} = \sign(y_i),
 \end{equation*}
 and hence \eqref{eq:rablb} yields $\lambda_i y_i = 0$, which in turn gives \eqref{eq:rabl2b} due to $y_i \neq 0$.
 Now take $i\in \{1, ..., n\}$ with $|q_i| < 1$ arbitrary. Then we have $y_i = 0$, and hence \eqref{eq:rablb} implies
 $q_i \eta_i = |\eta_i|$. Because of $|q_i| < 1$ this results in \eqref{eq:rabl2c}. 
 To show \eqref{eq:rabl2d}, let $i\in \{1, ..., n\}$ with $y_i = 0$ and $|q_i| = 1$ 
 be arbitrary. Then \eqref{eq:rablb} gives $q_i \eta_i = |\eta_i| \geq 0$.

 $\eqref{eq:rablcomplsys2} \Rightarrow \eqref{eq:rablcomplsys}$:\\
 Due to \eqref{eq:rabl2b} and \eqref{eq:rabl2e} we have $\lambda_i q_i \leq 0$ whenever $|q_i| = 1$, 
 which, in view of \eqref{eq:maxequiv}, implies \eqref{eq:rablc}. 
 Because of \eqref{eq:rabl2b}, we have
 \begin{equation}\label{eq:rablequiv1}
  \lambda_i y_i + \eta_i q_i = \eta_i q_i \quad \forall \, i = 1, ..., n.
 \end{equation}
 Now, if $y_i \neq 0$, then $q_i = \sign(y_i)$ and thus $\eta_i q_i = \sign(y_i) \eta_i$.
 If $y_i = 0$ and $|q_i| < 1$, then, by \eqref{eq:rabl2c}, we obtain  $\eta_i q_i = 0 = |\eta_i|$.
 If finally $y_i = 0$ and $|q_i| = 1$, then \eqref{eq:rabl2d} implies $\eta_i q_i  = |\eta_i||q_i| = |\eta_i|$.
 In summary \eqref{eq:rablb} is verified, which yields the assertion.
\end{proof}

System \eqref{eq:rablcomplsys2} is not yet complete, since there is still one relation missing to derive the VI fulfilled 
by $\eta$. The missing part is stated in the following lemma.

\begin{lemma}\label{lem:etalam}
 There holds
 \begin{equation*}
  \eta_i \lambda_i = 0 \quad \text{for all } i \in \{1, ..., n\} \text{ with } y_i = 0,\; |q_i| = 1.
 \end{equation*}
\end{lemma}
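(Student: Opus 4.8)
The plan is to exploit the complementarity relation $q_i^t y_i^t = |y_i^t|$ from the perturbed system \eqref{eq:pert} at the level of the sequences, rather than only in the limit. Fix an index $i$ with $y_i = 0$ and $|q_i| = 1$. Since $y_i = 0$, the convergence \eqref{eq:convy} reads $y_i^t/t \to \eta_i$. I would distinguish the two cases $\eta_i = 0$ and $\eta_i \neq 0$: in the first case the claim $\eta_i \lambda_i = 0$ is immediate, so the whole argument reduces to showing that $\eta_i \neq 0$ forces $\lambda_i = 0$.

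Assume therefore $\eta_i \neq 0$. First I would pin down the sign of $\eta_i$: by \eqref{eq:rabl2d} we have $\eta_i q_i \geq 0$, and since $|q_i| = 1$ (so that $q_i = \sign(q_i)$) together with $\eta_i \neq 0$ this sharpens to $\sign(\eta_i) = \sign(q_i)$. Because $y_i^t/t \to \eta_i \neq 0$, there is a $t_0 > 0$ such that $y_i^t$ has the same nonzero sign as $\eta_i$, hence the same sign as $q_i$, for all $0 < t < t_0$. The key step is then to observe that for such $t$ the complementarity $q_i^t y_i^t = |y_i^t|$ with $y_i^t \neq 0$ forces $q_i^t = \sign(y_i^t) = q_i$. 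Consequently $(q_i^t - q_i)/t = 0$ for all $0 < t < t_0$, and passing to the limit via \eqref{eq:convq} yields $\lambda_i = 0$, so that $\eta_i \lambda_i = 0$ in every case.

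The argument is genuinely short, and I expect the only delicate point to be the sign bookkeeping: one must use the biactivity $|q_i| = 1$ jointly with $\eta_i q_i \geq 0$ to conclude that $\sign(y_i^t)$ actually \emph{agrees with} $q_i$ (and not merely that $y_i^t \neq 0$), since it is precisely this agreement that collapses the dual difference quotient to zero. A case split on the sign of $q_i$ (namely $q_i = 1$ versus $q_i = -1$) gives the same conclusion by symmetry and could replace the $\sign$ computation if a more elementary presentation is preferred. No subsequence issues arise, since the convergences \eqref{eq:convy} and \eqref{eq:convq} have already been established for the full sequence.
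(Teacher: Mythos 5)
Your proof is correct and follows essentially the same route as the paper: after dispatching the trivial case $\eta_i = 0$, you use \eqref{eq:rabl2d} to fix the sign of $\eta_i$, deduce from \eqref{eq:convy} that $y_i^t$ is eventually nonzero with sign $q_i$, and then invoke the complementarity $q_i^t y_i^t = |y_i^t|$ to conclude $q_i^t = q_i$ for small $t$, whence $\lambda_i = 0$ via \eqref{eq:convq}. The only cosmetic difference is that the paper argues w.l.o.g.\ with $q_i = 1$, which is precisely the case split you mention as an alternative to the $\sign$ bookkeeping.
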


\begin{proof}
 Let $i\in \{1, ..., n\}$ with $y_i = 0$ and $|q_i| = 1$ be arbitrary. W.l.o.g.\ we assume that $q_i = 1$. 
 The case $q_i = -1$ can be discussed analogously. If $\eta_i = 0$, the assertion is trivially fulfilled. So let $\eta_i \neq 0$. 
 By \eqref{eq:rabl2d} and $q_i = 1$ we then have $\eta_i > 0$. Due to \eqref{eq:convy} this implies
 \begin{equation*}
  \frac{y_i^t - y_i}{t} > 0 \quad \text{for } t > 0 \text{ sufficiently small}
 \end{equation*}
 and thus, due to $y_i = 0$, 
 \begin{equation*}
  y_i^t > 0 \quad \text{for } t > 0 \text{ sufficiently small.}
 \end{equation*}
 Consequently, $q_i^t = \sign(y^t_i) = 1$ for $t>0$ sufficiently small and hence, since $q_i = 1$ by assumption, 
 \begin{equation*}
  \lambda_i = \lim_{t\searrow 0} \frac{q^t_i - q_i}{t} = 0,
 \end{equation*}
 which gives the assertion.
\end{proof}

Now we have everything at hand to prove the main result of this section, i.e., the directional differentiability of $S: u\mapsto y$.

\begin{theorem}\label{thm:rablfinite}
 The solution mapping $S$ of \eqref{eq:vi2} is directionally differentiable at every point $u\in \R^n$ and 
 the directional derivative $\eta = S'(u;h)$ in direction $h\in \R^n$ solves the following \emph{VI of first kind}:
 \begin{equation}\label{eq:vi1}
  \eta \in K(y), \quad \dual{A \eta}{v - \eta} \geq \dual{h}{v-\eta} \quad \forall\, v\in K(y)
 \end{equation}
 where $K(y)$ is the convex cone defined by
 \begin{equation}\label{eq:conefinite}
  K(y) := \{ v\in \R^n: v_i = 0 \text{ if } |q_i| < 1,\; v_i q_i \geq 0 \text{ if } y_i = 0, \, |q_i| = 1 \}.
 \end{equation}
\end{theorem}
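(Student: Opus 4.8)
The plan is to read the variational inequality \eqref{eq:vi1} off the pointwise relations already assembled in \eqref{eq:rablcomplsys2} together with the missing complementarity relation from Lemma \ref{lem:etalam}, and then to invoke the classical Lions--Stampacchia theorem to obtain uniqueness and thereby the directional differentiability.

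First I would check admissibility, i.e.\ $\eta \in K(y)$. This is immediate from the structure of \eqref{eq:rablcomplsys2}: relation \eqref{eq:rabl2c} yields $\eta_i = 0$ whenever $|q_i| < 1$, while \eqref{eq:rabl2d} yields $\eta_i q_i \geq 0$ whenever $y_i = 0$ and $|q_i| = 1$. These are exactly the two defining conditions of the cone $K(y)$ in \eqref{eq:conefinite}, so $\eta$ lies in the feasible set.

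Next, for the inequality itself, I would exploit the identity $\lambda = h - A\eta$ from \eqref{eq:convq}. Testing against an arbitrary $v \in K(y)$ gives
\begin{equation*}
 \dual{A\eta}{v-\eta} - \dual{h}{v-\eta} = -\dual{\lambda}{v-\eta},
\end{equation*}
so that \eqref{eq:vi1} is equivalent to $\dual{\lambda}{v-\eta} \leq 0$ for all $v \in K(y)$. I would prove this componentwise. For indices with $y_i \neq 0$ the contribution vanishes, since $\lambda_i = 0$ by \eqref{eq:rabl2b}. For indices with $y_i = 0$ and $|q_i| < 1$ it also vanishes, because then $v_i = 0$ (as $v \in K(y)$) and $\eta_i = 0$ by \eqref{eq:rabl2c}. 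The only substantial case is $y_i = 0$ with $|q_i| = 1$: here Lemma \ref{lem:etalam} gives $\lambda_i \eta_i = 0$, whence $\lambda_i(v_i - \eta_i) = \lambda_i v_i$, and I would conclude $\lambda_i v_i \leq 0$ from the sign conditions $\lambda_i q_i \leq 0$ (by \eqref{eq:rabl2e}) and $v_i q_i \geq 0$ (membership in $K(y)$), distinguishing the subcases $q_i = 1$ and $q_i = -1$ and using $|q_i| = 1$. Summing over $i$ then yields $\dual{\lambda}{v-\eta} \leq 0$, which is precisely \eqref{eq:vi1}.

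Finally I would settle uniqueness, which also completes the claim of directional differentiability. Since $A$ is positive definite and $K(y)$ is a nonempty closed convex cone, the VI \eqref{eq:vi1} has a unique solution by Lions--Stampacchia. This is what was promised after \eqref{eq:convy}: every accumulation point of the bounded family $\{(y^t - y)/t\}$ satisfies \eqref{eq:rablcomplsys}, hence \eqref{eq:vi1}, and therefore coincides with this unique solution $\eta$; consequently the whole difference quotient converges and $S'(u;h) = \eta$. The only delicate point in the whole argument is the sign bookkeeping in the case $y_i = 0$, $|q_i| = 1$, where the complementarity $\lambda_i \eta_i = 0$ supplied by Lemma \ref{lem:etalam} is indispensable for discarding the $\lambda_i \eta_i$ term; everything else is routine.
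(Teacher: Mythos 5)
Your proposal is correct and follows essentially the same route as the paper's proof: feasibility of $\eta$ via \eqref{eq:rabl2c}--\eqref{eq:rabl2d}, the sign argument $\lambda_i v_i = (\lambda_i q_i)(q_i v_i) \leq 0$ on the biactive set combined with Lemma \ref{lem:etalam}, and uniqueness of the limit to upgrade subsequential to full convergence. The only cosmetic differences are that you verify $\dual{\lambda}{v-\eta}\leq 0$ in one componentwise pass where the paper separately shows $\dual{\lambda}{v}\leq 0$ and $\dual{\lambda}{\eta}=0$, and that you cite Lions--Stampacchia where the paper invokes maximal monotonicity of $A + \partial I_{K(y)}$ --- both equally valid here.
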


\begin{proof}
 Define the biactive set by 
 \begin{equation*}
  \BB := \{i \in \{1, ..., n\} : y_i = 0,\; |q_i| = 1\}.
 \end{equation*}
 Fist we show that the limit $\eta$ solves \eqref{eq:vi1}. We already know that $\eta$ satisfies \eqref{eq:rablcomplsys2} 
 and in addition $\eta_i \lambda_i = 0$ if $y_i = 0$ and $|q_i| = 1$.
 Thus \eqref{eq:rabl2c} and \eqref{eq:rabl2d} imply $\eta \in K(y)$, i.e., feasibility of $\eta$. 
 Now let $v\in K(y)$ be arbitrary. Then \eqref{eq:rabl2b}, $v\in K(y)$, and \eqref{eq:rabl2e} yield
 \begin{equation}\label{eq:lambdasign}
  \dual{\lambda}{v} = \sum_{i\in\BB} \lambda_i v_i = \sum_{i\in\BB} \lambda_i \underbrace{q_i q_i}_{=1} v_i \leq 0.
 \end{equation}
 Similarly, we infer from \eqref{eq:rabl2b}, $\eta \in K(y)$, and  Lemma \ref{lem:etalam} that
 \begin{equation*}
  \dual{\lambda}{\eta} = \sum_{i\in\BB} \lambda_i v_i = 0.
 \end{equation*}  
 Therefore, if we multiply \eqref{eq:rabl2a} with $v-\eta$, then we arrive at
 \begin{equation*}
 \begin{aligned}
  \dual{h}{v-\eta} = \dual{A\eta}{v-\eta} + \dual{\lambda}{v} - \dual{\lambda}{\eta} \leq \dual{A\eta}{v-\eta},
 \end{aligned}
 \end{equation*}
 so that the limit $\eta$ indeed solves \eqref{eq:vi1}.
 
 Since $A$ is positive definite and $K(y)$ is convex and closed, the operator $A + \partial I_{K(y)}(.): \R^n \to 2^{\R^n}$ 
 is maximal monotone, where $I_{K(y)}$ denotes the indicator function of the set $K(y)$. Thus there is a unique solution of \eqref{eq:vi1}.
 Since every accumulation point $\eta$ of the difference quotient $(y^t-y)/t$ solves \eqref{eq:vi1}, the limit is thus unique and 
 consequently a well-known argument gives the convergence of the whole sequence.
\end{proof}

\begin{corollary}
 Let the biactive set have zero cardinality, i.e.\ $y_i = 0$ implies $|q_i| < 1$. 
 Then $S$ is G\^ateaux-differentiable, i.e.\ $S'(u;h)$ is linear and continuous w.r.t.\ $h$, and $\eta = S'(u)h$ is given 
 by the unique solution of the following linear system:
 \begin{align}
  \eta_i &= 0 \quad \text{for all } i\in\{1, ..., n\} \text{ with } y_i = 0 \label{eq:etanull}\\ 
  \sum_{j: y_j \not = 0} A_{ij} \eta_j &= h_i \quad \text{for all } i\in\{1, ..., n\} \text{ with } y_i \neq 0.\label{eq:rableq}
 \end{align}
\end{corollary}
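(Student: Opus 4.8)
The plan is to invoke Theorem~\ref{thm:rablfinite} and show that under the zero-biactivity hypothesis the cone $K(y)$ degenerates into a linear subspace, which turns the VI \eqref{eq:vi1} into a plain linear system. First I would record how the index sets line up. Since $|q_i|\le 1$ always holds, and since $y_i\neq 0$ forces $q_i=\sign(y_i)$ and hence $|q_i|=1$, while the assumption says $y_i=0$ implies $|q_i|<1$, the two partitions agree: $\{i : y_i\neq 0\} = \{i : |q_i|=1\}$ and $\{i : y_i=0\} = \{i : |q_i|<1\}$. In particular the biactive set $\BB=\{i : y_i=0,\ |q_i|=1\}$ is empty, so the second defining condition in \eqref{eq:conefinite} is vacuous and
\[
  K(y) = \{v\in\R^n : v_i = 0 \text{ for all } i \text{ with } y_i = 0\},
\]
which is a linear subspace of $\R^n$.

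Next I would exploit that \eqref{eq:vi1} is now posed over a subspace. For a VI of first kind over a linear subspace one may test with $v=\eta\pm w$ for arbitrary $w\in K(y)$ (both are admissible since $K(y)$ is a subspace); the two resulting inequalities combine to the variational equation $\dual{A\eta}{w}=\dual{h}{w}$ for all $w\in K(y)$. Membership $\eta\in K(y)$ immediately yields \eqref{eq:etanull}. Taking $w=e_i$, the $i$-th canonical basis vector, for each $i$ with $y_i\neq 0$ (these lie in $K(y)$), and using $\eta_j=0$ whenever $y_j=0$ to drop the inactive columns, the variational equation reduces to $\sum_{j:\,y_j\neq0} A_{ij}\eta_j = h_i$, i.e.\ \eqref{eq:rableq}. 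Thus $\eta$ satisfies exactly the announced linear system, and conversely any solution of that system lies in $K(y)$ and fulfills the variational equation, hence solves \eqref{eq:vi1}.

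Finally I would settle uniqueness and linearity. Writing $I_+=\{i : y_i\neq 0\}$, equations \eqref{eq:rableq} form the square system $A_{I_+I_+}\,\eta_{I_+}=h_{I_+}$, whose matrix is a principal submatrix of the positive definite matrix $A$ and is therefore itself positive definite, hence invertible; this gives $\eta_{I_+}=(A_{I_+I_+})^{-1}h_{I_+}$ together with $\eta_i=0$ for $i\notin I_+$, so the system has a unique solution depending linearly and continuously on $h$. Since by Theorem~\ref{thm:rablfinite} this $\eta$ is precisely the directional derivative $S'(u;h)$, and since the map $h\mapsto\eta$ is now linear and bounded, $S$ is G\^ateaux differentiable at $u$ with $S'(u)h=\eta$. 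The only step requiring genuine care is the collapse of the cone to a subspace; once $\BB=\emptyset$ is established, the remaining arguments are routine.
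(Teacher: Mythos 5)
Your proof is correct, but it enters through a slightly different door than the paper. The paper's proof does not go back to the VI \eqref{eq:vi1} at all: it reads \eqref{eq:etanull} directly off the pointwise limit system, namely from \eqref{eq:rabl2c} (since with empty biactive set the index sets $\{i: y_i=0\}$ and $\{i: |q_i|<1\}$ coincide), and obtains \eqref{eq:rableq} immediately from \eqref{eq:rabl2b} together with $A\eta+\lambda=h$. You instead take only the statement of Theorem~\ref{thm:rablfinite} as input, observe that $\BB=\emptyset$ collapses the cone \eqref{eq:conefinite} to a linear subspace, and then use the standard symmetric test $v=\eta\pm w$ to turn the VI into the variational equation $\dual{A\eta}{w}=\dual{h}{w}$ on that subspace, extracting \eqref{eq:rableq} with $w=e_i$. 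Both routes finish identically, with the positive definiteness (and hence invertibility) of the principal submatrix $A_{\II}$ -- and your argument for this is valid even though $A$ is not assumed symmetric, since $x^\top A x>0$ restricts to principal subblocks. What your route buys is self-containedness (it needs only the theorem, not the intermediate system \eqref{eq:rablcomplsys2}) and, as a bonus, the explicit converse direction that any solution of the linear system solves the VI \eqref{eq:vi1}, which the paper leaves implicit; what the paper's route buys is brevity, since \eqref{eq:rablcomplsys2} already encodes the needed relations componentwise.
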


\begin{proof}
 If the biactive set has zero cardinality, then \eqref{eq:rabl2c} implies \eqref{eq:etanull}. 
 Moreover, \eqref{eq:rabl2b} immediately yields \eqref{eq:rableq}. Since $A$ is positive definite, the same holds for 
 $A_\II := (A_{ij})_{i,j\in\II}$ with $\II := \{ i \in \{1, ..., n\}: y_i \neq 0\}$. 
 Thus $A_\II$ is invertible and $\eta_\AA = A_\II^{-1} h_\II$. Together with \eqref{eq:etanull}, 
 i.e.\ $\eta_{\{1, ..., n\}\setminus\II} = 0$, this implies that $\eta$ is uniquely determined by \eqref{eq:etanull} and 
 \eqref{eq:rableq}. Moreover, due to the invertibility of $A_\II$, $\eta$ depends continuously on $h$ as claimed.
\end{proof}

\section{Weak differentiability for a VI of second kind in function space}

Next we extend the result of the preceding section to a VI of second kind in function space.
For this purpose, let $\Omega \subset \mathbb R^d$, $d\geq 1$, be a bounded domain with regular boundary satisfying the cone condition.
We consider the following prototypical VI of second kind:
\begin{equation}\tag{VI2}\label{eq:vi}
 \dual{A y}{v-y} + \int_\Omega |v|\,dx - \int_\Omega |y|\,dx \geq \dual{u}{v-y}
 \quad \forall \, v\in V,
\end{equation}
where we abbreviated $V := H^1_0(\Omega)$. From now on $\dual{.}{.}$ denotes the dual pairing in $V$.
Furthermore $A: V \to V^*$ stands for the following linear second-order elliptic differential operator:
\begin{equation}\label{eq:diffop}
 A y = \sum_{i=1}^d \Big(\sum_{j=1}^d \ddp{}{x_i} a_{ij} \ddp{y}{x_j} + b_i\,\ddp{y}{x_i}\Big) + \gamma\,y,
\end{equation}
where $a_{ij}, b_i, \gamma \in L^\infty(\Omega)$, $i,j=1, .., d$, are such that $A$ is coercive, i.e.
\begin{equation}\label{eq:coer}
 \dual{A y}{y} \geq \alpha \, \|y\|_{V}^2,
\end{equation}
with a constant $\alpha > 0$. In addition, we require 
\begin{equation}\label{eq:gammasign}
 \gamma \geq 0.
\end{equation}
Moreover, $u \in V^*$ is given a inhomogeneity.

The plan of this section is as follows. First we state some well known results for \eqref{eq:vi} concerning existence, uniqueness, 
and an equivalent reformulation by means of a complementarity-like system. Then we introduce a perturbed problem, similar to \eqref{eq:pert}, 
and derive several auxiliary results for the associated difference quotients and their (weak) limits. In order to show an infinite 
dimensional analogon to \eqref{eq:rabl2b}, we unfortunately need to assume some properties of the active set, 
see Assumption \ref{assu:active} below. Based on this assumption, we can derive a weak directional differentiability result, similar 
to Theorem \ref{thm:rablfinite} (see Theorem \ref{thm:ablvi} below).

\begin{lemma}\label{lem:lipschitz}
 For every $u\in V^*$ there exists a unique solution $y\in V$ of \eqref{eq:vi}, which we denote by $y = S(u)$. 
 The associated solution operator $S: V^* \to V$ is globally Lipschitz continuous, i.e., there exists a constant $L > 0$ 
 such that 
 \begin{equation}
  \|S(u_1) - S(u_2)\|_V \leq L \, \|u_1 - u_2\|_{V^*} \quad \forall \, u_1, u_2 \in V^*.
 \end{equation}
\end{lemma}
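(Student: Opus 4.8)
The plan is to handle existence and uniqueness by the standard theory of variational inequalities of the second kind, and then to obtain the Lipschitz estimate by a direct testing argument that exploits the exact cancellation of the nondifferentiable term.

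For existence and uniqueness I would first record that the bilinear form induced by $A$ is bounded, since $a_{ij}, b_i, \gamma \in L^\infty(\Omega)$, and coercive by \eqref{eq:coer}, while $\varphi(v) := \int_\Omega |v|\,\dx$ is proper, convex, and continuous on $V$, the latter following from the embedding $V = H^1_0(\Omega) \embed L^1(\Omega)$ on the bounded domain $\Omega$. Hence \eqref{eq:vi} is a variational inequality of the second kind with a coercive (not necessarily symmetric) form and a proper, convex, lower semicontinuous functional, so the classical Lions–Stampacchia existence and uniqueness result for such problems applies and produces a unique $y = S(u) \in V$ for each $u \in V^*$. Equivalently, one can argue via maximal monotonicity: $A$ is strongly monotone and Lipschitz and $\partial\varphi$ is maximal monotone with full domain, so $A + \partial\varphi$ is maximal monotone and coercive, which gives the same conclusion. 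This step is the part that genuinely requires the correct abstract framework (and the appropriate reference); once it is in place the rest is elementary.

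For the Lipschitz continuity I set $y_1 := S(u_1)$ and $y_2 := S(u_2)$. Since \eqref{eq:vi} holds for all $v \in V$, I would test the inequality for $y_1$ with $v = y_2$ and the inequality for $y_2$ with $v = y_1$. Adding the two resulting inequalities, the terms $\int_\Omega |y_1|\,\dx$ and $\int_\Omega |y_2|\,\dx$ cancel exactly, leaving
\begin{equation*}
 \dual{A y_1}{y_2 - y_1} + \dual{A y_2}{y_1 - y_2} \geq \dual{u_1}{y_2 - y_1} + \dual{u_2}{y_1 - y_2}.
\end{equation*}
By linearity of $A$ this rearranges to $\dual{A(y_1 - y_2)}{y_1 - y_2} \leq \dual{u_1 - u_2}{y_1 - y_2}$.

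Finally I would invoke coercivity \eqref{eq:coer} on the left and the duality estimate on the right to obtain
\begin{equation*}
 \alpha\,\norm{y_1 - y_2}_V^2 \leq \dual{A(y_1 - y_2)}{y_1 - y_2} \leq \norm{u_1 - u_2}_{V^*}\,\norm{y_1 - y_2}_V,
\end{equation*}
whence $\norm{y_1 - y_2}_V \leq \alpha^{-1}\,\norm{u_1 - u_2}_{V^*}$, i.e. the claim with $L = 1/\alpha$. The only non-routine ingredient in this second part is the observation that the nonsmooth $\varphi$-terms cancel upon adding the two tested inequalities, so that no further handling of the nondifferentiable functional is needed.
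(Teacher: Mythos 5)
Your proposal is correct and follows essentially the same route as the paper: existence and uniqueness via maximal monotonicity of $A + \partial\|\cdot\|_{L^1(\Omega)}$, and the Lipschitz estimate via cross-testing the two inequalities, cancellation of the $L^1$-terms, and coercivity, yielding $L = 1/\alpha$.
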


\begin{proof}
 Existence and uniqueness for \eqref{eq:vi} follows by standard arguments from the maximal monotonicity of $A + \partial \|.\|_{L^1(\Omega)}$, 
 see for instance \cite{Barbu1993}.
 To prove the Lipschitz continuity we test the VI for $y_1 = S(u_1)$ with $y_2 = S(u_2)$ and vice versa and add the arising 
 inequalities to obtain
 \begin{equation*}
  \dual{A(y_1 - y_2)}{y_1 - y_2} \leq \dual{u_1 - u_2}{y_1 - y_2}.
 \end{equation*}
 The coercivity of $A$ then yields the result.
\end{proof}

\begin{remark}
Sometimes we will use $S$ with different domains and ranges, which may be inferred from the context.
\end{remark}
By standard arguments based on Fenchel duality or the Hahn-Banach theorem, the VI in \eqref{eq:vi} can be rewritten in terms of 
a complementarity-like system, see e.g. \cite{Delosreyes2009}. In this way the following result is obtained:

\begin{lemma}\label{lem:slack}
 For every $u\in V^*$ there exists a unique function $q\in L^2(\Omega)$ such that the unique solution $y \in V$ of \eqref{eq:vi} fulfills
 the following complementarity-like system:
 \begin{subequations}
 \begin{gather}
  \dual{A y}{v} + \int_\Omega q\, v\,dx = \dual{u}{v} \quad \forall \, v\in V \label{eq:qdef}\\
  q(x) y(x) = |y(x)|,\quad |q(x)| \leq 1 \quad \text{a.e.\ in } \Omega.\label{eq:slacklike}
 \end{gather}
 \end{subequations}  
 The function $q$ is called slack function in all what follows, and we will refer to \eqref{eq:slacklike} as slackness condition 
 in the sequel.
\end{lemma}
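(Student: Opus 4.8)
The plan is to interpret the VI \eqref{eq:vi} as a subgradient inclusion and to identify the slack function as a suitable representative of the subgradient. Introducing $\Phi : V \to \R$, $\Phi(v) := \int_\Omega \abs{v}\,dx = \norm{v}_{L^1(\Omega)}$, which is convex and Lipschitz continuous on $V$ thanks to the embedding $V \embed L^1(\Omega)$, the inequality \eqref{eq:vi} can be rewritten as
\begin{equation*}
 \Phi(v) - \Phi(y) \geq \dual{u - A y}{v - y} \quad \forall\, v\in V,
\end{equation*}
that is, $q^* := u - A y \in \partial\Phi(y) \subset V^*$. It then remains to show that this subgradient is represented by a function $q\in L^2(\Omega)$ with the claimed pointwise properties.

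First I would show that $q^*$ is represented by an essentially bounded function. Inserting $v = y \pm w$ for arbitrary $w\in V$ into the subgradient inequality and using the $1$-Lipschitz continuity of $\Phi$ with respect to the $L^1$-norm yields $\abs{\dual{q^*}{w}} \leq \norm{w}_{L^1(\Omega)}$. Since $C_c^\infty(\Omega) \subset V$ is dense in $L^1(\Omega)$, the functional $q^*$ extends uniquely to a bounded linear functional on $L^1(\Omega)$ of norm at most one, and by the duality $(L^1(\Omega))^* = L^\infty(\Omega)$ there exists a unique $q\in L^\infty(\Omega)$ with $\norm{q}_{L^\infty(\Omega)} \leq 1$ such that $\dual{q^*}{w} = \int_\Omega q\,w\,dx$ for all $w\in V$. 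As $\Omega$ is bounded we have $q\in L^\infty(\Omega) \subset L^2(\Omega)$, and rewriting $q^* = u - A y$ gives precisely \eqref{eq:qdef}, while $\norm{q}_{L^\infty(\Omega)}\leq 1$ delivers the bound $\abs{q(x)}\leq 1$ a.e.\ required in \eqref{eq:slacklike}.

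To establish the complementarity $q(x)\,y(x) = \abs{y(x)}$ a.e., I would test the subgradient inequality with the two admissible choices $v = 0$ and $v = 2y$. These produce $\int_\Omega q\,y\,dx \geq \int_\Omega \abs{y}\,dx$ and $\int_\Omega q\,y\,dx \leq \int_\Omega \abs{y}\,dx$, respectively, hence $\int_\Omega (\abs{y} - q\,y)\,dx = 0$. Since $\abs{q}\leq 1$ a.e.\ forces the integrand $\abs{y} - q\,y$ to be nonnegative pointwise, a nonnegative integrand with vanishing integral must vanish almost everywhere, which yields the desired pointwise identity. Uniqueness of $q$ is immediate from \eqref{eq:qdef}: if $q_1, q_2$ both satisfy it, then $\int_\Omega (q_1 - q_2)\,v\,dx = 0$ for all $v\in V$, and the density of $V$ in $L^2(\Omega)$ forces $q_1 = q_2$.

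The main obstacle is the representation step, namely passing from the abstract subgradient $q^* \in V^* = H^{-1}(\Omega)$ to a genuine function $q \in L^\infty(\Omega)$; this is exactly where the special $L^1$-structure of the nonsmooth term is exploited, via the $L^1$-Lipschitz bound on $\Phi$ combined with the density of $V$ in $L^1(\Omega)$ and the identification $(L^1(\Omega))^* = L^\infty(\Omega)$. Once this representation is secured, the slackness conditions follow from the elementary pointwise argument above.
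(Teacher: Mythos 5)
Your proof is correct, and it follows essentially the route the paper itself indicates: the paper states the lemma without proof, attributing it to ``standard arguments based on Fenchel duality or the Hahn-Banach theorem'' (citing the earlier work of De los Reyes), and your argument is precisely a worked-out instance of that — the subgradient inclusion $u - Ay \in \partial\Phi(y)$, the $L^1$-Lipschitz bound with extension by density and the duality $(L^1)^* = L^\infty$ giving $\abs{q}\leq 1$, and the test functions $v=0$, $v=2y$ yielding the pointwise complementarity. All steps check out, including the uniqueness via density of $V$ in $L^2(\Omega)$.
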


Next let $h\in V^*$ be arbitrary and $\{t_n\} \subset \R^+$ be an arbitrary sequence of positive numbers tending to 0. 
We denote the solutions to the VI associated to $u+t_n h$ by $y_n$, i.e., 
\begin{equation}\label{eq:ynvi}
 \dual{A y_n}{v-y_n} + \int_\Omega |v|\,dx - \int_\Omega |y_n|\,dx \geq \dual{u+t_n h}{v-y_n}
 \quad \forall \, v\in V.
\end{equation}
The associated slack function is analogously denoted by $q_n \in L^2(\Omega)$, i.e.
\begin{equation}\label{eq:qn}
\begin{gathered}
 \dual{A y_n}{v} + \int_\Omega q_n\, v\,dx = \dual{u + t_n h}{v} \quad \forall \, v\in V, \\
 q_n(x) y(x) = |y_n(x)|,\quad |q_n(x)| \leq 1 \quad \text{a.e.\ in } \Omega.
\end{gathered}
\end{equation}
By Lemma \ref{lem:lipschitz} it holds
\begin{equation*}
 \Big\| \frac{y_n - y}{t_n}\Big\|_V \leq \|h\|_{V^*}
\end{equation*}
and thus there is a weakly convergent subsequence, denoted the same, and a limit point $\eta \in V$ such that
\begin{equation}\label{eq:weakV}
 \frac{y_n - y}{t_n} \weak \eta \text{ in } V.
\end{equation}
This simplification of notation will be justified by the uniqueness of the weak limit $\eta$, which implies the 
weak convergence of the whole sequence by a well-known argument (see Theorem \ref{thm:ablvi} below).
For the slack functions we obtain
\begin{equation*}
 \int_\Omega \frac{q_n - q}{t_n} \, v \, dx = \dual{h}{v} - \Bigdual{A \,\frac{y_n - y}{t_n}}{v} 
 \to \dual{h - A\eta}{v} \quad \forall\, v\in V,
\end{equation*}
i.e., 
\begin{equation*}
 \frac{q_n - q}{t_n} \weak \lambda \text{ in } V^*,
\end{equation*}
with $\lambda = h - A\eta$. 
Note that it is in general not possible to show the boundedness of $(q_n - q)/t_n$ in any Lebesgue space so that 
one cannot expect $\lambda$ to be more regular.

Next consider the first equation in the slackness condition \eqref{eq:slacklike} for $y$ and $y_n$. By multiplying these equations 
with $1/t_n$ and an arbitrary $\varphi\in C^\infty_0(\Omega)$, integrating over $\Omega$, and taking the difference, we arrive at
\begin{equation}\label{eq:slackdiff}
 \int_\Omega \frac{q_n - q}{t_n}\, y_n\, \varphi\, dx + \int_\Omega \frac{y_n - y}{t_n}\, q \, \varphi\, dx 
 = \int_\Omega \frac{|y_n| - |y|}{t_n}\,\varphi \, dx, \quad \forall\, \varphi \in C^\infty_0(\Omega).
\end{equation}
In order to pass to the limit in this relation, we have to define the following sets:
\begin{definition}\label{def:sets}
 We define --up to sets of zero measure--
 \begin{equation}\label{eq:defsets}
 \begin{aligned}
  \AA &:= \{x\in\Omega : y(x) = 0\}, & \AA_s &:= \{x\in \Omega: |q(x)| < 1\}\\
  \II &:= \{x\in \Omega: y(x) \neq 0\}, & \BB &:= \{x\in \Omega: |q(x)| = 1, \; y(x) = 0\}\\
  \II^+ &:= \{x\in \Omega: y(x) > 0\}, & 
  \II^- &:= \{x\in \Omega: y(x) < 0\}\\
  \BB^+ &:= \{x\in \Omega: q(x) = 1, \; y(x) = 0\}, & 
  \BB^- &:= \{x\in \Omega: q(x) = -1, \; y(x) = 0\}.
 \end{aligned}
 \end{equation}
 The set $\AA$ is called \emph{active set}, while $\AA_s$ is the \emph{strongly active set}. 
 Moreover, we call $\II$ and $\BB$ \emph{inactive} and \emph{biactive set}, respectively.
\end{definition}

Note that 
\begin{equation*}
 \Omega = \AA \cup \II \quad \text{and} \quad \AA = \AA_s \cup \BB,
\end{equation*}
due to \eqref{eq:slacklike}.
The next lemma covers the directional differentiability of the $L^1$-norm. Its proof is straightforward and therefore postponed to 
Appendix \ref{sec:l1deriv}.

\begin{lemma}\label{lem:l1deriv}
 For every $\varphi \in C^\infty_0(\Omega)$ it holds
 \begin{equation*}
  \int_\Omega \frac{|y_n| - |y|}{t_n}\,\varphi \, dx \to \int_\Omega \absop'(y;\eta)\,\varphi\, dx
 \end{equation*}
 with
 \begin{equation*}
  \absop'(y;\eta) \in L^2(\Omega), \quad \absop'(y;\eta)(x) := 
  \begin{cases}
   \sign\big(y(x)\big) \eta(x), & y(x) \neq 0\\
   |\eta(x)|, & y(x) = 0.
  \end{cases}
 \end{equation*}
\end{lemma}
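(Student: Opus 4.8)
The plan is to establish the stated convergence of the difference quotients for the $L^1$-norm by reducing the function-space claim to the pointwise directional-differentiability computation already performed in Lemma~\ref{lem:rablabs} for the finite-dimensional case, and then justifying the interchange of limit and integral by a dominated-convergence argument. First I would fix $\varphi \in C^\infty_0(\Omega)$ and rewrite the integrand as
\begin{equation*}
 \frac{|y_n(x)| - |y(x)|}{t_n}\,\varphi(x),
\end{equation*}
so that the task is to identify the pointwise a.e.\ limit of $(|y_n|-|y|)/t_n$ and to produce an integrable dominating function. For the pointwise limit, I would split $\Omega$ into the inactive set $\II$ where $y(x) \neq 0$ and the active set $\AA$ where $y(x) = 0$. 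On $\II$, for a.e.\ $x$ the sign of $y(x)$ is locally constant, so for $n$ large the strong triangle inequality and the weak convergence \eqref{eq:weakV} (upgraded to a pointwise a.e.\ convergence of a further subsequence of $(y_n-y)/t_n$) give the limit $\sign(y(x))\eta(x)$; on $\AA$ one instead uses that $y(x)=0$ forces $|y_n(x)|/t_n = |(y_n(x)-y(x))/t_n| \to |\eta(x)|$, matching the definition of $\absop'(y;\eta)$.

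The domination step is the main obstacle, because weak convergence in $V$ does not by itself supply a pointwise dominating function. The clean way around this is to invoke the Lipschitz continuity from Lemma~\ref{lem:lipschitz} together with the strong triangle inequality: pointwise almost everywhere one has
\begin{equation*}
 \Big|\frac{|y_n(x)| - |y(x)|}{t_n}\Big| \leq \Big|\frac{y_n(x) - y(x)}{t_n}\Big|,
\end{equation*}
and the right-hand side is bounded in $V \embed L^2(\Omega)$ by $\|h\|_{V^*}$, hence bounded in $L^2(\Omega)$ uniformly in $n$. Multiplying by the fixed $\varphi \in C^\infty_0(\Omega)$, which is bounded with bounded support, I obtain a sequence of integrands dominated in the sense needed to apply a generalized dominated convergence theorem (or, more robustly, to pass to a weakly convergent subsequence in $L^2(\Omega)$ and identify its limit via the a.e.\ pointwise convergence just established). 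Because the a.e.\ limit is uniquely $\absop'(y;\eta)$ independent of the subsequence, and $\absop'(y;\eta) \in L^2(\Omega)$ by the same Lipschitz bound applied to $\eta$, the whole sequence of integrals converges to $\int_\Omega \absop'(y;\eta)\,\varphi\,\dx$.

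One technical point I would be careful about is extracting the pointwise a.e.\ convergence of $(y_n-y)/t_n$ needed for the integrand's a.e.\ limit: weak convergence in $V$ gives strong convergence in $L^2(\Omega)$ only via the compact embedding $H^1_0(\Omega) \embed\embed L^2(\Omega)$ (valid under the cone condition on $\Omega$), from which a further subsequence converges pointwise a.e. Since all such subsequences share the same limit $\absop'(y;\eta)$, the full-sequence convergence of the integrals follows by the standard subsequence argument. This is exactly the kind of measure-theoretic bookkeeping the paper defers to Appendix~\ref{sec:l1deriv}, so I would relegate the routine estimates there and keep the main line as the splitting into $\II$ and $\AA$, the strong-triangle-inequality domination, and the compact-embedding extraction of a.e.\ convergence.
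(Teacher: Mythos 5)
Your proposal is correct, but it runs along a mechanically different route than the paper's proof. The paper inserts the intermediate term $|y+t_n\eta|$ and splits the error into three pieces: the discrepancy $\bigl||y_n|-|y+t_n\eta|\bigr|/t_n$ is bounded pointwise by $\bigl|(y_n-y)/t_n-\eta\bigr|$ via the strong triangle inequality and killed in $L^1$ by the strong $L^2$ convergence of the difference quotients (the same compact embedding $H^1\embed\embed L^2$ you invoke); the fixed-direction quotient $\bigl(|y+t_n\eta|-|y|\bigr)/t_n$ is then \emph{identically} equal to $|\eta|$ on $\AA$ (since $y=0$ there), so the active-set contribution vanishes exactly for every $n$ with no limiting argument, and on $\Omega\setminus\AA$ classical dominated convergence applies with the genuine fixed dominant $2|\eta|\,|\varphi| \in L^1(\Omega)$. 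You instead work with the actual quotient $(|y_n|-|y|)/t_n$ directly, which forces you to extract a.e.-convergent subsequences and to identify the limit through weak $L^2$ compactness plus a subsequence-uniqueness argument. Your fallback route (uniform $L^2$ bound, weakly convergent subsequence, identification of the weak limit with the a.e.\ limit, testing against $\varphi$, uniqueness of all subsequential limits) is airtight and complete. One small caution: your first-mentioned ``generalized dominated convergence theorem'' is not justified by a uniform $L^2$ bound alone; to use Pratt's lemma you would need the dominators $\bigl|(y_n-y)/t_n\bigr|\,|\varphi|$ to converge in $L^1$, which does hold here because the compact embedding gives \emph{strong} $L^2$ convergence of $(y_n-y)/t_n$ to $\eta$ (the weak limit in $V$ being known, this holds for the whole sequence) --- a point you should state explicitly if you take that branch rather than the weak-compactness one. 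What each approach buys: the paper's intermediate-term trick avoids all subsequence bookkeeping and needs only the classical dominated convergence theorem, while your direct approach dispenses with the algebraic observation that the active-set term is exactly zero, at the price of heavier measure-theoretic scaffolding.
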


Together with Lemma \ref{lem:l1deriv}
the weak convergence of $(q_n-q)/t_n$ in $V^*$ and $(y_n - y)/t_n$ in $V$ 
and the strong convergence of $y_n$ to $y$ in $V$ allow to pass to the limit in \eqref{eq:slackdiff}, which results in
\begin{equation}\label{eq:slackabl}
 \dual{\lambda}{y\,\varphi} + \int_\Omega \eta\, q \, \varphi\, dx = \int_\Omega \absop'(y;\eta)\,\varphi\,dx 
 \quad \forall \, \varphi \in C^\infty_0(\Omega).
\end{equation}
Using this relation, we can prove the following result, which is just the infinite dimensional counterpart to \eqref{eq:rabl2c} and 
\eqref{eq:rabl2d}:

\begin{lemma}\label{lem:etafeas}
 There holds
 \begin{align}
  \eta(x) &= 0 \quad \text{a.e., where } |q(x)| < 1 \label{eq:etaAs}\\
  \eta(x)\, q(x) &\geq 0 \quad \text{a.e., where } |q(x)| = 1 \text{ and } y(x) = 0.\label{eq:etaB}
 \end{align}
\end{lemma}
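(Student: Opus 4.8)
The plan is to reduce \eqref{eq:slackabl} to a pointwise identity on the active set $\AA$ and then read off \eqref{eq:etaAs}--\eqref{eq:etaB} exactly as in the finite-dimensional case. First I would exploit the slackness condition \eqref{eq:slacklike}: on the inactive set $\II$ we have $y \neq 0$ and hence $q = \sign(y)$, so that $\eta\, q = \sign(y)\,\eta = \absop'(y;\eta)$ there, while on $\AA$ one has $\absop'(y;\eta) = |\eta|$. Consequently the difference $\absop'(y;\eta) - \eta\,q$ is concentrated on $\AA$, and \eqref{eq:slackabl} collapses to
\begin{equation*}
 \dual{\lambda}{y\,\varphi} = \int_\AA \big(|\eta| - \eta\, q\big)\,\varphi\,dx \qquad \forall\,\varphi \in C^\infty_0(\Omega).
\end{equation*}
Since $|q| \le 1$ a.e., the integrand satisfies $|\eta| - \eta\,q \ge 0$ pointwise, so for $\varphi \ge 0$ the right-hand side is nonnegative.

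The crux is to control the term $\dual{\lambda}{y\varphi}$, which is delicate because $\lambda$ lives only in $V^*$ and cannot be localized to the merely measurable set $\AA$. Rather than trying to make the distribution $\lambda$ vanish against functions supported on $\AA$, I would instead determine the \emph{sign} of $\dual{\lambda}{y\varphi}$ directly from the difference quotients. Since $(q_n - q)/t_n \weak \lambda$ in $V^*$ and $y\varphi \in V$, and since $(q_n-q)/t_n \in L^2(\Omega)$, the pairing equals $\dual{\lambda}{y\varphi} = \lim_n t_n^{-1}\int_\Omega (q_n - q)\,y\,\varphi\,dx$. Now the slackness conditions for $y$ and $y_n$ yield the pointwise estimate $q_n\,y \le |q_n|\,|y| \le |y| = q\,y$, hence $(q_n - q)\,y \le 0$ a.e. Therefore, for every $\varphi \ge 0$ each integrand is nonpositive, $t_n > 0$, and passing to the limit gives $\dual{\lambda}{y\varphi} \le 0$.

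Combining the two observations, for every nonnegative $\varphi \in C^\infty_0(\Omega)$ we obtain $0 \le \int_\AA (|\eta| - \eta\,q)\,\varphi\,dx = \dual{\lambda}{y\varphi} \le 0$, so the integral vanishes for all such $\varphi$; writing a general test function as the difference of nonnegative ones and invoking the fundamental lemma of the calculus of variations gives $\eta\,q = |\eta|$ a.e. on $\AA$. The two assertions then follow just as in the finite-dimensional derivation of \eqref{eq:rabl2c}--\eqref{eq:rabl2d}: where $|q| < 1$, the identity $\eta\,q = |\eta|$ together with $\eta\,q \le |\eta|\,|q| < |\eta|$ (whenever $\eta \neq 0$) forces $\eta = 0$, proving \eqref{eq:etaAs}; and where $|q| = 1$ and $y = 0$ one simply has $\eta\,q = |\eta| \ge 0$, proving \eqref{eq:etaB}. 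The only genuine obstacle is the $V^*$-term, and the key point is that its \emph{sign}, not its localization, is what the proof actually requires.
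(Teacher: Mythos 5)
Your proof is correct and follows essentially the same route as the paper: both rest on the pointwise sign $(q_n - q)\,y \le 0$ derived from the slackness conditions, the resulting limit $\dual{\lambda}{y\,\varphi} \le 0$ for nonnegative $\varphi$, and the identity $\eta\, q = |\eta|$ a.e.\ on $\AA$ extracted via the fundamental lemma, with your preliminary cancellation of the $\II$-part of \eqref{eq:slackabl} being only a cosmetic rearrangement. The one detail worth making explicit is the inclusion $\{x : |q(x)| < 1\} \subset \AA$ (up to null sets), which follows from \eqref{eq:slacklike} and is needed so that the identity on $\AA$ applies where $|q| < 1$.
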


\begin{proof}
 Let $\varphi \in C^\infty_0(\Omega)$ with $\varphi \geq 0$ a.e.\ in $\Omega$ be arbitrary. 
 The slackness condition \eqref{eq:slacklike} implies for all $n\in \N$ that
 \begin{equation*}
  \frac{q_n(x) - q(x)}{t_n}\, y(x) \leq 0 \quad \text{a.e.\ in }\Omega.
 \end{equation*}
 Therefore we have
 \begin{equation*}
  \dual{\lambda}{y\, \varphi} = \lim_{n\to \infty} \int_\Omega \frac{q_n - q}{t_n}\, y\, \varphi \, dx \leq 0,
 \end{equation*}
 and thus \eqref{eq:slackabl} yields
 \begin{equation*}
  \int_\Omega \eta\, q \, \varphi\, dx \geq \int_\Omega \absop'(y;\eta)\,\varphi\,dx 
 \quad \forall \, \varphi \in C^\infty_0(\Omega) \text{ with } \varphi \geq 0.
 \end{equation*}
 The fundamental lemma of the calculus of variations thus yields
 \begin{equation*}
  \eta(x)\, q(x) \geq \absop'(y;\eta)(x) \quad \text{a.e.\ in } \Omega,
 \end{equation*}
 which by definition of $\absop'(y;\eta)$ in turn gives
 \begin{equation*}
  \eta(x) \, q(x) \geq |\eta(x)| \quad \text{a.e. in } \AA.
 \end{equation*}
 Since $|q(x)| \leq 1$ a.e.\ in $\Omega$, this results in 
 \begin{equation}\label{eq:etaqae}
  \eta(x) \, q(x) = |\eta(x)| \quad \text{a.e. in } \AA.
 \end{equation}
 As the slackness conditions in \eqref{eq:slacklike} implies $\{x\in\Omega: |q(x)| < 1\} \subset \{x\in\Omega: y(x) = 0\}$, 
 the result follows immediately from \eqref{eq:etaqae}.
\end{proof}

\begin{lemma}\label{lem:etalambdanull}
 There holds $\dual{\lambda}{\eta} \geq 0$.
\end{lemma}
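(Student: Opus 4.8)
The plan is to extract a sign-definite product of the two difference quotients from the slackness conditions and then pass to the limit. First I would note that $|q_n|\le 1$ and $|q|\le 1$ a.e.\ give the pointwise bounds $q_n\,y \le |y|$ and $q\,y_n \le |y_n|$, so that, using $q_n y_n = |y_n|$ and $q\,y = |y|$ from \eqref{eq:slacklike} and \eqref{eq:qn},
\begin{equation*}
 \int_\Omega (q_n - q)(y_n - y)\,\dx
 = \int_\Omega \big(|y_n| - q_n\,y - q\,y_n + |y|\big)\,\dx \ge 0 .
\end{equation*}
Dividing by $t_n^2 > 0$ and recalling that subtracting \eqref{eq:qdef} from the first line of \eqref{eq:qn} yields $\tfrac{q_n-q}{t_n} = h - A\,\tfrac{y_n - y}{t_n}$ in $V^*$, the left-hand side equals the dual pairing $\dual{h - A g_n}{g_n}$ with the abbreviation $g_n := (y_n - y)/t_n$, so that the inequality reads
\begin{equation*}
 \dual{h}{g_n} - \dual{A g_n}{g_n} \ge 0 \qquad \forall\, n\in\N .
\end{equation*}

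By \eqref{eq:weakV} we have $g_n \weak \eta$ in $V$, hence $\dual{h}{g_n} \to \dual{h}{\eta}$. The crux is therefore the weak lower semicontinuity estimate $\liminf_{n\to\infty}\dual{A g_n}{g_n} \ge \dual{A\eta}{\eta}$. To obtain it I would split the bilinear form $a(v,w):=\dual{Av}{w}$ into its principal (second-order) part and the lower-order contributions. Since $\Omega$ is bounded with boundary satisfying the cone condition, the embedding $V \embed L^2(\Omega)$ is compact, so that $g_n \to \eta$ strongly in $L^2(\Omega)$ while the gradients converge only weakly in $L^2(\Omega)^d$. The lower-order terms $\int_\Omega b_i\,\ddp{g_n}{x_i}\,g_n\,\dx$ and $\int_\Omega \gamma\,g_n^2\,\dx$ are each a product of a weakly convergent factor with a strongly convergent one (using $b_i,\gamma\in L^\infty(\Omega)$) and therefore converge to the corresponding expressions in $\eta$. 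The principal part $v\mapsto \int_\Omega \sum_{i,j} a_{ij}\,\ddp{v}{x_j}\,\ddp{v}{x_i}\,\dx$ is, by the ellipticity of $A$, a nonnegative quadratic form in $\nabla v$; being convex and continuous on $V$ it is weakly lower semicontinuous, which delivers the claimed estimate.

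Taking the $\limsup$ in the inequality $\dual{h}{g_n} - \dual{A g_n}{g_n}\ge 0$ and inserting the two facts just established gives
\begin{equation*}
 0 \le \limsup_{n\to\infty}\big(\dual{h}{g_n} - \dual{A g_n}{g_n}\big)
 \le \dual{h}{\eta} - \dual{A\eta}{\eta} = \dual{\lambda}{\eta},
\end{equation*}
where the final equality uses $\lambda = h - A\eta$; this is the assertion. I expect the passage to the limit in the quadratic term $\dual{A g_n}{g_n}$ to be the main obstacle: since $\lambda$ is only known to lie in $V^*$ one cannot pair it with $\eta$ through an $L^2$ product, and a product of two merely weakly convergent difference quotients need not converge. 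It is precisely the splitting into a weakly lower semicontinuous principal part and compactly convergent lower-order terms that makes the limit computable and fixes the sign.
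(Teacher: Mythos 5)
Your proposal is correct, and its skeleton is the same as the paper's: establish $\int_\Omega (q_n-q)(y_n-y)\,\dx \geq 0$, rewrite the difference quotient via the subtracted equations as $\dual{h}{g_n}-\dual{Ag_n}{g_n}$, and conclude by weak lower semicontinuity of the quadratic form together with $\lambda = h - A\eta$. The two sub-steps are justified differently, though. For the sign condition, the paper cross-tests the two variational inequalities (inserting $y_n$ into the inequality for $(y,q)$ and $y$ into the one for $(y_n,q_n)$ and adding), whereas you derive the \emph{pointwise} inequality $(q_n-q)(y_n-y)\geq 0$ a.e.\ directly from the slackness conditions \eqref{eq:slacklike} and \eqref{eq:qn}, i.e.\ from the monotonicity of the subdifferential of $r\mapsto\vert r\rvert$; your version is in fact slightly stronger (pointwise rather than integrated) and equally valid. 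For the lower semicontinuity, the paper has a one-line argument: since $\dual{Aw}{w}\geq\alpha\,\norm{w}_V^2\geq 0$ by \eqref{eq:coer}, the full quadratic form $w\mapsto\dual{Aw}{w}$ is a nonnegative, hence convex, continuous functional on $V$ and therefore weakly lower semicontinuous — no decomposition needed. Your splitting into principal part plus compactly convergent lower-order terms also works, but note that the nonnegativity of the principal form $v\mapsto\int_\Omega\sum_{i,j}a_{ij}\,\ddp{v}{x_j}\,\ddp{v}{x_i}\,\dx$, which you invoke ``by ellipticity,'' is not literally among the paper's hypotheses: only coercivity of the \emph{full} operator \eqref{eq:coer} is assumed, and extracting the pointwise Legendre condition $\sum_{i,j}a_{ij}(x)\xi_i\xi_j\geq\alpha\abs{\xi}^2$ a.e.\ from it requires a (standard but nontrivial) oscillating-test-function argument; without pointwise nonnegativity of the symmetric part, the principal form alone need not be weakly lower semicontinuous. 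So your route is correct but does extra work and quietly leans on that standard fact, while the paper's convexity observation sidesteps it entirely.
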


\begin{proof}
 By inserting the definition of the slack variable $q$ into \eqref{eq:vi} one obtains
 \begin{equation}
  \int_\Omega q(v-y)\,dx \leq \int_\Omega |v|\, dx - \int_\Omega |y|\, dx \quad \forall\, v\in V
 \end{equation}
 and an analogous inequality for $q_n$ and $y_n$. 
 Inserting $y_n \in V$ in this inequality and $y$ in the corresponding one for $q_n$ and $y_n$, adding both inequalities 
 and dividing by $t_n^2$ yields
 \begin{equation*}
  \int_\Omega \frac{q_n - q}{t_n}\,\frac{y_n - y}{t_n}\,dx \geq 0.
 \end{equation*}
 Since $A$ is elliptic and bounded, the mapping $V \ni w \mapsto \dual{A w}{w} \in \R$ is convex and continuous and thus 
 weakly lower semicontinuous. The equations for $q$ and $q_n$ and the weak convergence of $(y_n - y)/t_n$ in $V$ therefore imply
 \begin{equation*}
 \begin{aligned}
  0 &\leq \liminf_{n\to\infty} \int_\Omega \frac{q_n - q}{t_n}\,\frac{y_n - y}{t_n}\,dx \\
  &\leq \limsup_{n\to\infty} \int_\Omega \frac{q_n - q}{t_n}\,\frac{y_n - y}{t_n}\,dx \\
  &= \limsup_{n\to\infty} \Big(\Bigdual{h}{\frac{y_n - y}{t_n}} - \Bigdual{A\Big(\frac{y_n - y}{t_n}\Big)}{\frac{y_n - y}{t_n}}\Big)\\
  &\leq \lim_{n\to\infty} \Bigdual{h}{\frac{y_n - y}{t_n}} - \liminf_{n\to\infty} \Bigdual{A\Big(\frac{y_n - y}{t_n}\Big)}{\frac{y_n - y}{t_n}}\\
  &\leq \dual{h}{\eta} - \dual{A \eta}{\eta} = \dual{\lambda}{\eta}.
 \end{aligned}
 \end{equation*}  
\end{proof}

The most delicate issue, when transferring the finite dimensional findings of Section \ref{sec:finite} to the function space setting, 
is to verify the conditions \eqref{eq:rabl2a} and \eqref{eq:rabl2e} on $\lambda$. 
To do so, we first prove that $S$ is Lipschitz continuous in $L^\infty(\Omega)$, provided that the right hand sides in \eqref{eq:vi} are 
more regular. We employ the well-known technique of Stampacchia based on the following lemma, whose proof is presented in Appendix \ref{sec:stam} for convenience of the reader.

\begin{lemma}[Stampacchia]\label{lem:stam}
 For every function $w\in V$ and every $k\geq 0$, the function $w_k$ defined by
 \begin{equation}\label{eq:truncfunc}
  w_k(x) := 
  \begin{cases}
   w(x) - k, & w(x) \geq k\\
   0, & |w(x)| < k\\
   w(x) + k, & w(x) \leq -k
  \end{cases}
 \end{equation}  
 is an element of $V$. Furthermore, if there is a constant $\alpha > 0$ such that 
 \begin{equation}\label{eq:stamest}
  \alpha \|w_k\|_{H^1(\Omega)}^2 \leq \int_\Omega f\,w_k\,dx \quad \forall\, k \geq 0
 \end{equation}
 with a function $f\in L^p(\Omega)$, $p> \max\{d/2, 1\}$,
 then $w$ is essentially bounded and there exists a constant $c>0$ so that 
 \begin{equation}\label{eq:inftybound}
  \|w\|_{L^\infty(\Omega)} \leq c\,\|f\|_{L^p(\Omega)}.
 \end{equation}
\end{lemma}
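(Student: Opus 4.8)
The plan is to prove the two assertions separately: the membership $w_k \in V$ follows from a chain rule for Sobolev functions, while the $L^\infty$-bound is obtained by the classical level-set iteration of Stampacchia.

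First I would observe that $w_k = G_k(w)$, where $G_k(s) := \max\{s-k,0\} + \min\{s+k,0\}$ is Lipschitz continuous with constant $1$ and satisfies $G_k(0) = 0$. The standard chain rule for the composition of an $H^1$-function with such a Lipschitz map then yields $w_k \in H^1(\Omega)$ together with $\nabla w_k = \nabla w\,\chi_{\{\abs{w} > k\}}$ a.e.\ in $\Omega$; since $G_k(0) = 0$ and $w$ has vanishing trace, $w_k$ has vanishing trace as well, so that $w_k \in H^1_0(\Omega) = V$. In particular $w_k$ is supported on the superlevel set $\{\abs{w} > k\}$, a fact that will drive the iteration.

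For the second part I would set $A(k) := \{x\in\Omega : \abs{w(x)} > k\}$ and $\mu(k) := \abs{A(k)}$, which is nonincreasing in $k \ge 0$. Since $w_k$ vanishes off $A(k)$, combining \eqref{eq:stamest} with Hölder's inequality gives $\alpha\,\norm{w_k}_{H^1}^2 \le \norm{f}_{L^p}\,\norm{w_k}_{L^{p'}}$ with $p' = p/(p-1)$. I would then invoke the Sobolev embedding $V \embed L^q(\Omega)$ --- with $q = 2^* = 2d/(d-2)$ for $d \ge 3$ and any fixed large finite $q$ for $d \le 2$ --- and interpolate $\norm{w_k}_{L^{p'}} \le \norm{w_k}_{L^q}\,\mu(k)^{1/p' - 1/q}$ by Hölder (valid since $p' < q$ under the hypothesis). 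This produces an estimate $\norm{w_k}_{L^q} \le c_1\,\norm{f}_{L^p}\,\mu(k)^{1/p' - 1/q}$. On the other hand, for $h > k$ one has $\abs{w_k} \ge h - k$ on $A(h) \subset A(k)$, so that $(h-k)\,\mu(h)^{1/q} \le \norm{w_k}_{L^q}$. Raising to the power $q$ and combining the two bounds yields the recursive inequality
\[
 \mu(h) \le \frac{c_1^q\,\norm{f}_{L^p}^q}{(h-k)^q}\;\mu(k)^{\beta}, \qquad h > k \ge 0, \quad \beta := \frac{q}{p'} - 1.
\]
Finally I would apply the well-known Stampacchia iteration lemma: a nonincreasing $\mu$ satisfying such an inequality with exponent $\beta > 1$ vanishes for all $k$ beyond a threshold $\kappa$ depending on $c_1^q \norm{f}_{L^p}^q$, $\mu(0) = \abs{\Omega}$, $q$ and $\beta$, and which is linear in $\norm{f}_{L^p}$ (since the governing constant enters through its $q$-th root). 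Hence $\mu(\kappa) = 0$, i.e.\ $\norm{w}_{L^\infty(\Omega)} \le \kappa \le c\,\norm{f}_{L^p}$, which is \eqref{eq:inftybound}.

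The crux of the argument is the exponent bookkeeping that secures $\beta > 1$: a direct computation shows $\beta = q/p' - 1 > 1 \iff q(1 - 1/p) > 2 \iff p > d/2$ when $q = 2^*$, so the assumption $p > \max\{d/2, 1\}$ is exactly what the iteration requires --- the factor $\max\{\,\cdot\,,1\}$ (with $p > 1$ ensuring $p' < \infty$) handling the borderline dimensions $d = 1, 2$, where $2^*$ must be replaced by a sufficiently large finite Sobolev exponent $q$. Everything else reduces to routine Sobolev--Hölder manipulations and the invocation of the standard iteration lemma.
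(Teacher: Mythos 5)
Your proposal is correct and follows essentially the same route as the paper: the Stampacchia truncation theorem (Lipschitz chain rule) for $w_k\in V$, then the classical level-set recursion $\mu(h)\le C\,(h-k)^{-q}\,\mu(k)^{\beta}$ with $\beta>1$ precisely when $p>d/2$, closed by the Kinderlehrer--Stampacchia iteration lemma, yielding a threshold linear in $\norm{f}_{L^p(\Omega)}$. The only cosmetic difference is where H\"older is applied --- the paper peels the measure factor off $f$ via $\norm{f}_{L^{m'}(A(k))}$ with $m'$ the conjugate Sobolev exponent, while you peel it off $\norm{w_k}_{L^{p'}}$ by interpolating against $L^q$ --- and your uniform treatment of $d\le 2$ via a large finite $q$ subsumes the case $d=1$ that the paper handles separately through $H^1(\Omega)\embed L^\infty(\Omega)$.
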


\begin{lemma}\label{lem:inftylip}
 There exists a constant $K>0$ such that 
 \begin{equation*}
  \|S(u_1) - S(u_2)\|_{L^\infty(\Omega)} \leq K\,\|u_1 - u_2\|_{L^p(\Omega)}
 \end{equation*}
 for all $u_1, u_2 \in L^p(\Omega)$ with $p> \max\{d/2, 1\}$. Here we identified $u \in L^p(\Omega)$ with an element of $V^*$.
\end{lemma}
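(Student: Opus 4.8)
The plan is to apply Stampacchia's lemma (Lemma \ref{lem:stam}) to the difference $w := S(u_1) - S(u_2) = y_1 - y_2$, with the role of the right-hand side $f$ played by $u_1 - u_2 \in L^p(\Omega)$. To this end I would first subtract the two defining equations \eqref{eq:qdef} for $y_1$ and $y_2$, obtaining
\begin{equation*}
 \dual{A w}{v} + \int_\Omega (q_1 - q_2)\, v \, dx = \int_\Omega (u_1 - u_2)\, v \, dx \quad \forall\, v \in V,
\end{equation*}
where $q_1, q_2$ are the slack functions associated with $y_1, y_2$ via Lemma \ref{lem:slack}, and where I have used the identification of $u_i \in L^p(\Omega)$ with an element of $V^*$. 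Since $w \in V$, the truncation $w_k$ defined in \eqref{eq:truncfunc} again lies in $V$ by the first part of Lemma \ref{lem:stam}, so it is admissible as a test function. Testing with $v = w_k$ gives
\begin{equation*}
 \dual{A w}{w_k} = \int_\Omega (u_1 - u_2)\, w_k \, dx - \int_\Omega (q_1 - q_2)\, w_k\, dx.
\end{equation*}

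The two remaining tasks are to bound $\dual{A w}{w_k}$ from below by $\alpha\,\|w_k\|_{H^1}^2$ and to show that the slack term is nonnegative, so that it may be discarded. For the lower bound I would exploit that $\nabla w_k = \nabla w$ on $\{|w| \geq k\}$ and $w_k = 0$ on $\{|w| < k\}$: this forces the principal and first-order (convection) parts of the bilinear form evaluated at $(w, w_k)$ to coincide with those evaluated at $(w_k, w_k)$, while for the zeroth-order (reaction) term the pointwise inequality $w\,w_k \geq w_k^2$ together with $\gamma \geq 0$ from Assumption \eqref{eq:gammasign} yields $\int_\Omega \gamma\, w\, w_k\,dx \geq \int_\Omega \gamma\, w_k^2\, dx$. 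Summing the three contributions and invoking coercivity \eqref{eq:coer} gives $\dual{A w}{w_k} \geq \dual{A w_k}{w_k} \geq \alpha\,\|w_k\|_{H^1}^2$. For the slack term, the slackness conditions \eqref{eq:slacklike} express that $q_i(x)$ is a subgradient of $|\,.\,|$ at $y_i(x)$, so monotonicity of the subdifferential gives $(q_1 - q_2)\,w = (q_1 - q_2)(y_1 - y_2) \geq 0$ a.e.; since $w_k$ vanishes where $|w|$ is small and otherwise has the same sign as $w$ (indeed $w_k = \rho\, w$ a.e.\ with $0 \leq \rho \leq 1$), this propagates to $(q_1 - q_2)\,w_k \geq 0$ a.e., hence $\int_\Omega (q_1 - q_2)\,w_k\,dx \geq 0$.

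Combining the two estimates produces exactly the hypothesis \eqref{eq:stamest} of Lemma \ref{lem:stam},
\begin{equation*}
 \alpha\,\|w_k\|_{H^1}^2 \leq \int_\Omega (u_1 - u_2)\, w_k\, dx \quad \forall\, k \geq 0,
\end{equation*}
with $f = u_1 - u_2 \in L^p(\Omega)$, $p > \max\{d/2, 1\}$. Lemma \ref{lem:stam} then immediately delivers $\|w\|_{L^\infty(\Omega)} \leq c\,\|u_1 - u_2\|_{L^p(\Omega)}$, which is the assertion with $K := c$. The main obstacle I anticipate is the lower bound $\dual{A w}{w_k} \geq \alpha\,\|w_k\|_{H^1}^2$: the truncation identity disposes of the principal and convection terms cleanly, but the reaction term genuinely requires the sign condition $\gamma \geq 0$, which is precisely why Assumption \eqref{eq:gammasign} was imposed. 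Without it one could not control the zeroth-order contribution, and the Stampacchia estimate would break down.
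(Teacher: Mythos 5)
Your proposal is correct, and it diverges from the paper's proof in one interesting place. Both arguments share the same skeleton: apply Stampacchia's Lemma \ref{lem:stam} to $w = y_1 - y_2$ with $f = u_1 - u_2$, and establish $\alpha\,\norm{w_k}_{H^1(\Omega)}^2 \leq \dual{A w_k}{w_k} \leq \dual{A w}{w_k}$ via the truncation structure of $w_k$ from \eqref{eq:truncfunc} and the sign condition \eqref{eq:gammasign} — your treatment of the principal, convection and reaction parts is exactly the computation displayed in the paper, including the pointwise inequality $w\,w_k \geq w_k^2$ for the zeroth-order term. The difference lies in how the bound $\dual{A w}{w_k} \leq \int_\Omega (u_1-u_2)\,w_k\,dx$ is obtained. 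The paper stays at the primal level: it tests the VI for $y_1$ with $y_1 - w_k$ and the one for $y_2$ with $y_2 + w_k$, adds to get \eqref{eq:diffineq}, and then verifies by an explicit case distinction that $I(x) = \abs{y_1} + \abs{y_2} - \abs{y_1 - w_k} - \abs{y_2 + w_k} \geq 0$ a.e., so the nonsmooth contribution can be dropped. You instead route through the complementarity system of Lemma \ref{lem:slack}: subtracting the two equations \eqref{eq:qdef} and discarding $\int_\Omega (q_1 - q_2)\,w_k\,dx \geq 0$, which follows in one line because \eqref{eq:slacklike} says $q_i(x) \in \partial\abs{\,\cdot\,}\big(y_i(x)\big)$ a.e., so $(q_1-q_2)(y_1-y_2) \geq 0$ by monotonicity; your sign-propagation step is also sound, since $w_k \neq 0$ forces $\abs{w} > k$ and $w_k/w \in (0,1]$, i.e.\ $w_k = \rho\,w$ with $\rho \in [0,1]$ as you claim. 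Your route buys brevity and a degree of generality: the monotonicity argument carries over verbatim to any VI of the second kind whose convex integrand admits a pointwise subgradient (slack) representation, whereas the paper's case distinction is tailored to the absolute value. Conversely, the paper's version in effect reproves the relevant instance of subdifferential monotonicity by hand directly from the VI and so does not lean on Lemma \ref{lem:slack}; since that lemma is established earlier in the paper, your reliance on it is legitimate and your proof is complete.
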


\begin{proof}
 We apply Lemma \ref{lem:stam} to $w:= y_1 - y_2$ with $y_i = S(u_i)$, $i=1,2$. 
 To this end we shall verify \eqref{eq:stamest} with $f = u_1 - u_2$.
 For this purpose we test the VI for $y_1$ with $y_1 - v$ and the one for $y_2$ with $y_2 + v$ and add the arising 
 inequalities to obtain:
 \begin{equation}\label{eq:diffineq}
  \dual{A(y_1 - y_2)}{v} + \int_\Omega\big(|y_1| + |y_2| - |y_1 - v| - |y_2 + v|\big) dx
  \leq \int_\Omega (u_1 - u_2)v \, dx \quad \forall\, v\in V.
 \end{equation}
 Next let $k \geq 0$ be arbitrary and define $w_k = (y_1 - y_2)_k$ as in \eqref{eq:truncfunc}. 
 In the following we will prove that  
 \begin{equation}\label{eq:wksign}
  I(x) := |y_1(x)| + |y_2(x)| - |y_1(x) - w_k(x)| - |y_2(x) + w_k(x)| \geq 0 \quad\text{a.e.\ in } \Omega,
 \end{equation}
 by a simple distinction of cases.

 \emph{1st case: $|y_1(x) - y_2(x)| < k$:}\\
 In this case we have $w_k(x) = 0$ and thus \eqref{eq:wksign} is trivially fulfilled with equality. 

 \emph{2nd case: $y_1(x) - y_2(x) \geq k$:}\\
 Now we obtain $w_k(x) = y_1(x) - y_2(x) - k$ and consequently
 \begin{equation*}
  I(x) = |y_1(x)| + |y_2(x)| - |y_2(x) + k| - |y_1(x) - k|.
 \end{equation*}
 If $y_1(x) \geq k$ and $y_2(x) \leq -k$, then
 \begin{equation*}
  I(x) =  |y_1(x)| + |y_2(x)| + y_2(x) + k - y_1(x) + k \geq 2k \geq 0.
 \end{equation*}
 If $y_1(x) \leq k$ and $y_2(x) \geq -k$, then
 \begin{equation*}
  I(x) =  |y_1(x)| + |y_2(x)| - y_2(x) - k + y_1(x) - k 
  \geq 2\big( y_1(x) - y_2(x) - k \big)\geq 0,
 \end{equation*}
 where we used $y_1(x) - y_2(x) \geq k$ for the last estimate.\\
 If $y_1(x) \geq k$ and $y_2(x) \geq -k$, then
 \begin{equation*}
  I(x) =  |y_1(x)| + |y_2(x)| - y_2(x) - y_1(x) \geq 0.
 \end{equation*}
 If finally $y_1(x) \leq k$ and $y_2(x) \leq -k$, then
 \begin{equation*}
  I(x) =  |y_1(x)| + |y_2(x)| + y_2(x) + y_1(x) \geq 0,
 \end{equation*}
 which gives the assertion of \eqref{eq:wksign} for this case.

 \emph{3rd case: $y_1(x) - y_2(x) \leq -k$:}\\
 In this case we get that $y_2(x) - y_1(x) \geq k$ and thus $I(x) = |y_1(x)| + |y_2(x)| - |y_2(x) - k| - |y_1(x) + k|$.
 Interchanging the roles of $y_1(x)$ and $y_2(x)$ and repeating the arguments for the second case immediately 
 yields \eqref{eq:wksign} in the third case. 
 
 Let us now define $\AA_k := \{x\in \Omega: |w(x)| \geq k\}$.
 From the first part of Lemma \ref{lem:stam} we get that $w_k \in V$ and so we are allowed to insert $w_k$ 
 as test function in \eqref{eq:diffineq}. Owing to the coercivity of $A$, the definition of $w_k$ in \eqref{eq:truncfunc}, 
 \eqref{eq:gammasign}, and \eqref{eq:wksign}, we then obtain
 \begin{equation*}
 \begin{aligned}
  \alpha \, \|w_k\|_{H^1(\Omega)}^2 
  &\leq \dual{A w_k}{w_k}\\
  &= \int_{\AA(k)} \Big[\sum_{i=1}^d \Big( \sum_{j=1}^d a_{ij} \ddp{w_k}{x_j}\,\ddp{w_k}{x_j} dx
  + b_i \,\ddp{w_k}{x_i}\,w_k + \gamma\,\big(|w|-k\big)^2\Big]\dx\\
  &\leq \int_\Omega \Big[\sum_{i=1}^d \Big( \sum_{j=1}^d a_{ij} \ddp{w}{x_j}\,\ddp{w_k}{x_j} dx
  + b_i \,\ddp{w}{x_i}\,w_k + \gamma\,w\,w_k\Big]\dx\\
  &\leq \dual{A w}{w_k}  = \dual{A(y_1 - y_2)}{w_k} \leq \int_\Omega (u_1 - u_2) w_k\,dx,
 \end{aligned}
 \end{equation*}
 which is \eqref{eq:stamest} with $f = u_1 - u_2$.
 Since $k\geq 0$ was aribitrary, all conditions of Lemma \ref{lem:stam} are satisfied so that it can be applied and gives the desired 
 result.
\end{proof}

\begin{remark}\label{rem:infty}
 Since $S(0) = 0$, it immediately follows from Lemma \ref{lem:inftylip} that $$\|S(u)\|_{L^\infty(\Omega)} \leq c \|u\|_{L^p(\Omega)}.$$
\end{remark}

\begin{corollary}
 If $u,h\in L^p(\Omega)$ with $p> \max\{d/2, 1\}$, then 
 \begin{equation*}
  \frac{y_n - y}{t_n} \weak^* \eta \text{ in } L^\infty(\Omega),
 \end{equation*}
 which implies $\eta \in L^\infty(\Omega)$.
\end{corollary}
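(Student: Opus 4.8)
The plan is to combine the $L^\infty$-Lipschitz estimate of Lemma \ref{lem:inftylip} with the weak-* sequential compactness of bounded sets in $L^\infty(\Omega)$, and then to identify the resulting weak-* limit with the weak $V$-limit $\eta$ already fixed in \eqref{eq:weakV}.

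First I would apply Lemma \ref{lem:inftylip} directly to the difference quotient. Since $y = S(u)$ and $y_n = S(u + t_n h)$ with $u,h \in L^p(\Omega)$, the Lipschitz estimate gives
\begin{equation*}
 \norm{y_n - y}_{L^\infty(\Omega)} = \norm{S(u + t_n h) - S(u)}_{L^\infty(\Omega)} \leq K\, t_n\, \norm{h}_{L^p(\Omega)},
\end{equation*}
so that the difference quotients are uniformly bounded in $L^\infty(\Omega)$, namely $\norm{(y_n - y)/t_n}_{L^\infty(\Omega)} \leq K\, \norm{h}_{L^p(\Omega)}$. Because $L^\infty(\Omega) = (L^1(\Omega))^*$ with $L^1(\Omega)$ separable, the Banach--Alaoglu theorem then guarantees that any subsequence of $\{(y_n - y)/t_n\}$ possesses a further subsequence converging weak-* in $L^\infty(\Omega)$ to some $\zeta \in L^\infty(\Omega)$.

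The only step requiring a little care is the identification $\zeta = \eta$. Here I would exploit that $\Omega$ is bounded, whence $L^2(\Omega) \embed L^1(\Omega)$ and every $\varphi \in L^2(\Omega)$ is an admissible test function for weak-* convergence in $L^\infty(\Omega)$. On the other hand, the weak convergence $(y_n - y)/t_n \weak \eta$ in $V$ from \eqref{eq:weakV} implies, via the continuous embedding $V \embed L^2(\Omega)$, weak convergence in $L^2(\Omega)$. Testing against an arbitrary $\varphi \in L^2(\Omega)$ therefore yields $\int_\Omega \zeta\, \varphi\,dx = \lim_{n\to\infty} \int_\Omega \tfrac{y_n - y}{t_n}\, \varphi\,dx = \int_\Omega \eta\, \varphi\,dx$, so that $\zeta = \eta$ almost everywhere.

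Since every subsequence of $\{(y_n - y)/t_n\}$ thus admits a weak-* convergent subsubsequence whose limit is always the same element $\eta$, the standard subsequence argument shows that the whole sequence converges weak-* to $\eta$ in $L^\infty(\Omega)$; in particular $\eta \in L^\infty(\Omega)$, which is the claim. The argument is essentially routine: the sole subtlety is reconciling the two notions of limit (weak-* in $L^\infty$ and weak in $V$), which is resolved precisely by the inclusion $L^2(\Omega) \embed L^1(\Omega)$ on the bounded domain.
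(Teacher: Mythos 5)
Your proof is correct and follows essentially the same route as the paper: the $L^\infty$-Lipschitz estimate of Lemma \ref{lem:inftylip} yields uniform boundedness of the difference quotients, weak-$*$ compactness produces a limit $\tilde\eta \in L^\infty(\Omega)$, and this limit is identified with $\eta$ by testing against $L^2(\Omega)$ functions via the weak $V$-convergence in \eqref{eq:weakV}, whence uniqueness of the limit upgrades subsequential to whole-sequence weak-$*$ convergence. Your explicit invocation of Banach--Alaoglu with separability of $L^1(\Omega)$ and of the embedding $L^2(\Omega) \embed L^1(\Omega)$ merely spells out details the paper leaves implicit; no gap.
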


\begin{proof}
 By Lemma \ref{lem:inftylip} $(y_n - y)/t_n$ is bounded in $L^\infty(\Omega)$. Thus, there is a subsequence converging 
 weakly-$*$ to a element $\tilde\eta \in L^\infty(\Omega)$. 
 This subsequence therefore converges weakly in $L^2(\Omega)$ and in view of \eqref{eq:weakV} we find
 \begin{equation*}
  \int_\Omega \eta \, v \, dx =   \int_\Omega \tilde\eta \, v \, dx, \quad \forall \, v\in L^2(\Omega).
 \end{equation*}
 The fundamental lemma of the calculus of variations implies $\tilde\eta = \eta$ a.e.\ in $\Omega$. 
 Since the weak limit is therefore unique, a standard argument implies weak-$*$ convergence of the whole sequence as claimed.
\end{proof}

Based on the Lipschitz continuity of $S$ in Lemma \ref{lem:inftylip}, we can prove a first result towards an infinite dimensional 
counterpart to \eqref{eq:rabl2a}.

\begin{lemma}\label{lem:omega_rho}
 Assume that $u, h \in L^p(\Omega)$ with $p> \max\{d/2, 1\}$.
 Let moreover $\rho > 0$ be arbitrary and define --up to sets of measure zero--
 \begin{equation*}
  \AA_\rho := \{ x\in \Omega: y(x) \in [-\rho,\rho]\}.
 \end{equation*}
 Then for all $v \in V$ with $v(x) = 0$ a.e.\ in $\AA_\rho$ there holds
 \begin{equation*}
  \dual{\lambda}{v} = 0.
 \end{equation*}
\end{lemma}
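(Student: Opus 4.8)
The plan is to use the $L^\infty$-Lipschitz continuity of $S$ (Lemma \ref{lem:inftylip}) to show that the slack difference quotient $(q_n-q)/t_n$ vanishes a.e.\ on $\Omega\setminus\AA_\rho = \{x : |y(x)| > \rho\}$ for all sufficiently large $n$, and then to combine this with the hypothesis $v = 0$ on $\AA_\rho$. Granting the vanishing, the conclusion follows at once: for such $v$ the product $\frac{q_n-q}{t_n}\,v$ is zero a.e.\ on $\AA_\rho$ (since $v$ is) and zero a.e.\ on $\Omega\setminus\AA_\rho$ (since the difference quotient is), so $\int_\Omega \frac{q_n-q}{t_n}\,v\,\dx = 0$ for all large $n$. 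Passing to the limit along the subsequence that defines $\lambda$ and invoking $\int_\Omega \frac{q_n-q}{t_n}\,v\,\dx \to \dual{\lambda}{v}$, I obtain $\dual{\lambda}{v} = 0$.

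The substance of the argument is thus the sign-preservation step. First I would record, from Lemma \ref{lem:inftylip} with $y_n = S(u+t_n h)$ and $y = S(u)$, the uniform bound $\norm{y_n - y}_{L^\infty(\Omega)} \leq K\,t_n\,\norm{h}_{L^p(\Omega)}$, which tends to $0$. Choosing $n$ large enough that $K\,t_n\,\norm{h}_{L^p(\Omega)} < \rho$ gives $|y_n - y| < \rho$ a.e.; on $\{y > \rho\}$ this forces $y_n > y - \rho > 0$ and on $\{y < -\rho\}$ it forces $y_n < y + \rho < 0$, so $\sign(y_n) = \sign(y)$ a.e.\ on $\Omega\setminus\AA_\rho$. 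Since the slackness condition \eqref{eq:slacklike} yields $q = \sign(y)$ wherever $y\neq 0$ and $q_n = \sign(y_n)$ wherever $y_n\neq 0$, and since both $y$ and $y_n$ are nonzero a.e.\ on $\{|y| > \rho\}$, I conclude $q_n = q$ a.e.\ there, i.e.\ $(q_n-q)/t_n = 0$ a.e.\ on $\Omega\setminus\AA_\rho$.

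I expect the bookkeeping of the a.e.\ identities and the threshold on $n$ to be entirely routine; the real content is carried by Lemma \ref{lem:inftylip}. The essential reason the $L^\infty$-estimate is needed --- rather than only the $V$-Lipschitz continuity of Lemma \ref{lem:lipschitz} --- is that sign preservation on $\{|y| > \rho\}$ is a uniform pointwise statement: one must exclude $y_n$ changing sign on a set of positive measure inside $\{|y| > \rho\}$, which an $L^2$-control of $(y_n - y)/t_n$ cannot prevent but the uniform decay $\norm{y_n - y}_{L^\infty(\Omega)} \to 0$ does. This is also precisely why the hypothesis $u,h \in L^p(\Omega)$ with $p > \max\{d/2,1\}$ enters, as it is exactly the assumption required by Lemma \ref{lem:inftylip}.
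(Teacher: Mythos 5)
Your proposal is correct and follows essentially the same route as the paper's own proof: both use the $L^\infty$-Lipschitz estimate of Lemma \ref{lem:inftylip} to obtain a uniform (in $x$) threshold on $n$ beyond which $y_n$ keeps the sign of $y$ on $\{|y|>\rho\}$, deduce $q_n=q$ there via the slackness condition, and then pass to the limit in $\int_\Omega \frac{q_n-q}{t_n}\,v\,\dx=0$ using the weak convergence $(q_n-q)/t_n \weak \lambda$ in $V^*$. Your closing remark on why $V$-Lipschitz continuity alone would not suffice is accurate and consistent with the paper's reasoning.
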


\begin{proof}
 Let $\rho>0$ and $v\in V$ with $v(x) = 0$ a.e.\ in $\AA_\rho$ be arbitrary. 
 Thanks to Lemma \ref{lem:inftylip} we have
 \begin{equation}\label{eq:inftyconv}
  \|y_n - y\|_{L^\infty(\Omega)} \leq K\, t_n\, \|h\|_{L^p(\Omega)} \to 0. 
 \end{equation}
 Therefore, for almost all $x\in \Omega$ with $y(x) > \rho$, it follows that
 \begin{equation*}
  y_n(x) \geq y(x) - |y(x) - y_n(x)| \geq \rho - \|y - y_n\|_{L^\infty(\Omega)}  
  \geq \frac{\rho}{2} > 0, \quad \forall \, n \geq N_1,
 \end{equation*}
 with $N_1 \in \N$ depending on $\rho$ but not on $x$. Therefore, thanks to \eqref{eq:slacklike}, we have 
 for all $n \geq N_1$ that
 \begin{equation}\label{eq:qdiffnull}
  q_n(x) = \frac{y_n(x)}{|y_n(x)|} = 1 
  \quad \Rightarrow \quad 
  \frac{q_n(x) - q(x)}{t_n} = 0 \quad \text{f.a.a.\ } x \in \Omega \text{ with } y(x) > \rho,
 \end{equation}
 where we used that $q(x) = 1$ due to $y(x) > \rho > 0$.
 Completely analogously one can show the existence of $N_2 \in \N$, only depending on $\rho$, such that
 \begin{equation*}
  \frac{q_n(x) - q(x)}{t_n} = 0 \quad \text{f.a.a.\ } x \in \Omega \text{ with } y(x) < - \rho
 \end{equation*}
 for all $n \geq N_2$. Therefore, since $v(x) = 0$ a.e., where $y(x) \in [-\rho,\rho]$, we obtain
 \begin{equation*}
  \int_\Omega \frac{q_n - q}{t_n}\, v \, dx = 0 \quad \forall \, n \geq \max\{N_1, N_2\}. 
 \end{equation*}
 The convergence $(q_n - q)/t_n \weak \lambda$ in $V^*$ thus implies the assertion.
\end{proof}

The aim is now to drive $\rho$ in Lemma \ref{lem:omega_rho} to zero.
This however requires several additional assumptions. The first one covers the regularity of $y$ and $q$.

\begin{assumption}\label{assu:ycont}
 \begin{enumerate}
  \item\label{assu:ycont1} We assume that the solution $y = S(u)$ is continuous.
  \item\label{assu:ycont2} The slack function is continuous, i.e.\ $q\in C(\bar\Omega)$.
 \end{enumerate}
\end{assumption}

\begin{remark}
 Let us point out that Assumption \ref{assu:ycont}\eqref{assu:ycont1} is not restrictive at all. 
 Indeed, Lemma \ref{lem:slack} implies that $y$ solves $A y = u - q$ and, if $u\in L^2(\Omega)$, then $y$ thus solves a 
 second-order elliptic equation with right hand side in $L^2(\Omega)$. For problems of this type, standard 
 regularity theory yields continuity of the solution under mild assumptions on the data, see for instance \cite{Evans}.
 In contrast to this, Assumption \ref{assu:ycont}\eqref{assu:ycont2} cannot be guaranteed in general. 
 Nevertheless, multiple numerical observations indicate that $q$ is often continuous.
\end{remark}

If Assumption \ref{assu:ycont} is satisfied, i.e.\ if $y$ and $q$ have continuous representatives, then 
we can define the sets in Definition \ref{def:sets} in a pointwise manner, i.e., not only up to sets of zero measure. The sets arising in this way are denoted by the same symbols, and we 
always mean these sets in all what follows when writing $\AA$, $\II$, $\BB$ etc. 

\begin{lemma}\label{lem:inactivedist}
 Under Assumption \ref{assu:ycont} the sets $\II^+$ and $\II^-$ are strictly separated, i.e., there exists $\delta > 0$ such that
 \begin{equation*}
  \dist(\II^+, \II^-) := 
  \min\big\{|x-z|_{\R^d} : x\in \clos{\II^+}, z\in \clos{\II^-}\big\} > \delta. 
 \end{equation*}
\end{lemma}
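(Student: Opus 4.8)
The plan is to exploit the continuity of the slack function $q$ rather than that of $y$ alone. The decisive observation is that $q$ equals $+1$ on all of $\clos{\II^+}$ and $-1$ on all of $\clos{\II^-}$; hence these two closed sets cannot meet, and a compactness argument then upgrades disjointness into a strictly positive distance. Note that continuity of $y$ by itself is not enough, since $\clos{\II^+}$ and $\clos{\II^-}$ could a priori touch along the zero level set of $y$; it is the jump of $q$ from $+1$ to $-1$ that forbids this.

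First I would pin down the values of $q$ on the inactive components. By the slackness condition \eqref{eq:slacklike}, the identity $q(x)\,y(x) = |y(x)|$ with $|q(x)| \leq 1$ holds a.e.\ in $\Omega$; since under Assumption \ref{assu:ycont} both $y$ and $q$ are represented by continuous functions and two continuous functions agreeing a.e.\ agree everywhere, this identity holds pointwise throughout $\Omega$. Consequently, at every $x$ with $y(x) > 0$ one has $q(x) = |y(x)|/y(x) = 1$, that is, $q \equiv 1$ on $\II^+$, and symmetrically $q \equiv -1$ on $\II^-$. I would then pass to the closures: given $x \in \clos{\II^+}$, pick a sequence $x_k \in \II^+$ with $x_k \to x$, so that continuity of $q$ (up to the boundary, since $q \in C(\clos\Omega)$) gives $q(x) = \lim_k q(x_k) = 1$. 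Hence $q \equiv 1$ on $\clos{\II^+}$ and, by the same reasoning, $q \equiv -1$ on $\clos{\II^-}$. In particular the closures are disjoint, for any common point would force $1 = q(x) = -1$.

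Finally I would convert disjointness into a uniform gap. If one of $\II^+$, $\II^-$ is empty the asserted distance is $+\infty$ and the statement is trivial, so assume both are nonempty. Since $\Omega$ is bounded, $\clos{\II^+}$ and $\clos{\II^-}$ are closed and bounded subsets of $\R^d$, hence compact, and they are nonempty and disjoint. The map $(x,z) \mapsto |x - z|_{\R^d}$ is continuous on the compact set $\clos{\II^+} \times \clos{\II^-}$, so it attains its minimum there, and this minimum is strictly positive precisely because the two sets do not intersect. Therefore $\dist(\II^+, \II^-) > 0$, and any $\delta \in (0, \dist(\II^+, \II^-))$ yields the claim. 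The only genuinely substantive step is the identification of $q$ on the closures via the slackness condition and its continuity; the concluding compactness step is the standard fact that disjoint nonempty compact sets lie at positive distance.
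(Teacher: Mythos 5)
Your proof is correct and follows essentially the same route as the paper: both arguments rest on the slackness condition \eqref{eq:slacklike} combined with the continuity of $q$ on the compact set $\clos{\Omega}$, which forces $q \equiv 1$ on $\II^+$ and $q \equiv -1$ on $\II^-$. The only difference is in the final step and is cosmetic: the paper uses uniform continuity of $q$ to obtain $q \geq 1/2$ on a $\delta$-tube around $\II^+$ (a quantitative form it reuses later as \eqref{eq:IplusB}), whereas you pass to the closures and invoke the standard fact that disjoint nonempty compact sets lie at positive distance --- both validly yield the claimed $\delta > 0$.
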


\begin{proof}
 Since $\bar\Omega$ is compact, Assumption \ref{assu:ycont}\eqref{assu:ycont2} implies that $q$ is uniformly continuous. 
 From the slackness condition \eqref{eq:slacklike} we infer $q=1$ in $\II^+$ so that the uniform continuity of $q$ yields
 the existence of $\delta > 0$ with 
 \begin{equation}\label{eq:IplusB}
  q(x) \geq 1/2 \quad  \text{for all } x \in \II^+ + B(0,\delta).
 \end{equation}  
 Hence, due to $q = -1$ on $\II^-$ by \eqref{eq:slacklike}, this gives the assertion.
\end{proof}

In addition to Assumption \ref{assu:ycont}, we need the following rather restrictive assumption on the active set.

\begin{assumption}\label{assu:active}
 The active set $\AA = \{ x\in \Omega: y(x) = 0\}$ satisfies the following conditions:
 \begin{enumerate}
  \item\label{assu:active1} $\AA = \AA_1 \cup \AA_0$, where $\AA_1$ has positive measure and $\AA_0$ has zero capacity.
  \item\label{assu:active2} $\AA_1$ is closed and possesses non-empty interior. Moreover, it holds $\AA_1 = \clos{\interior(\AA_1)}$. 
  \item\label{assu:active3} For the set $\JJ:= \Omega\setminus \AA_1$ it holds
  \begin{equation}\label{eq:innererrand}
   \partial\JJ \setminus (\partial\JJ\cap\partial\Omega) = \partial\AA_1\setminus(\partial\AA_1\cap\partial\Omega),
  \end{equation}
  and both $\AA_1$ and $\JJ$ are supposed to have regular boundaries. 
  That is the connected components of $\JJ$ and $\AA_1$ have positive distance from each other and 
  the boundaries of each of them satisfies the cone condition.  
 \end{enumerate}
\end{assumption}

Figures \ref{fig:cap0} and \ref{fig:cap1} illustrate Assumption \ref{assu:active} in the two-dimensional case.
\begin{figure}[h!]
\centering
\begin{minipage}[t]{0.48\linewidth}
\centering
 \includegraphics[scale=0.5]{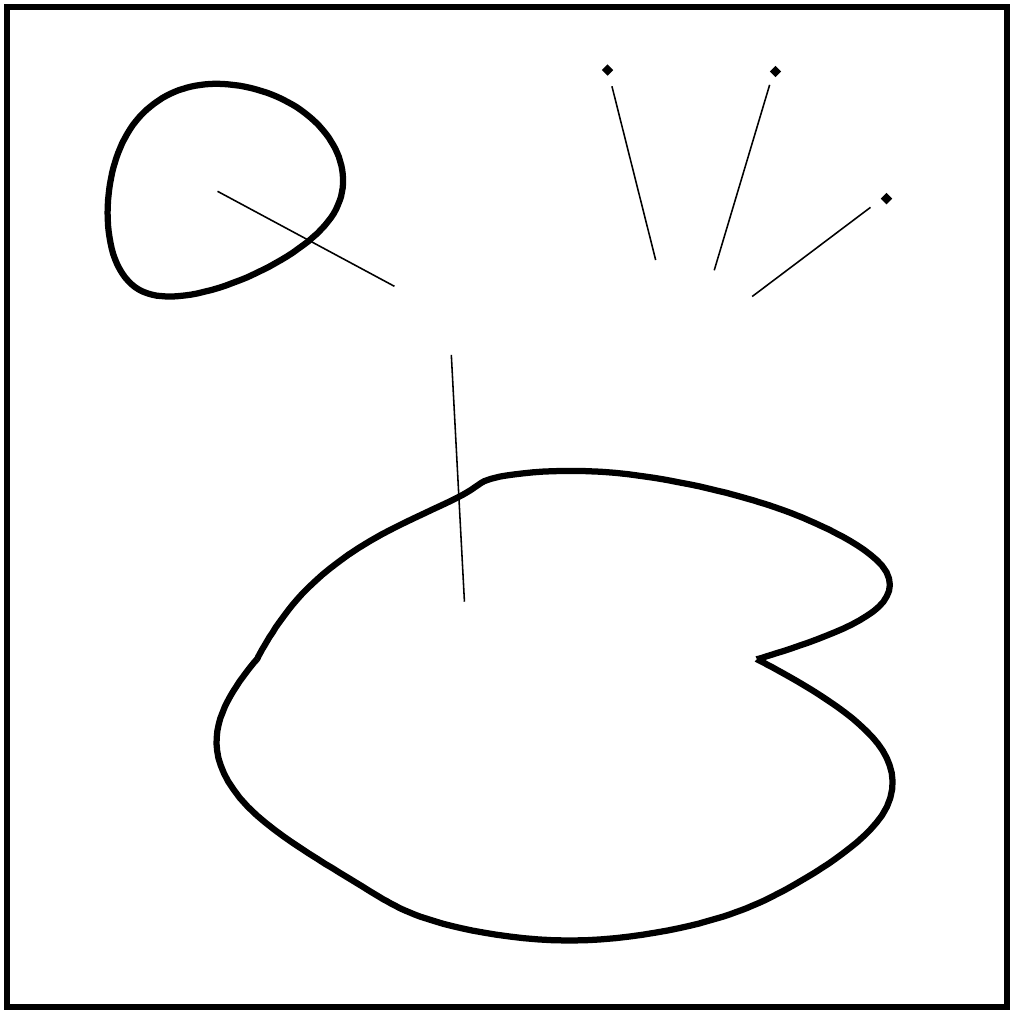}
 \put(-12,5){$\Omega$}
 \put(-90,100){$\AA_1$}
 \put(-53,97){$\AA_0$}
 \caption{Active set satistying Assumption \ref{assu:active}}\label{fig:cap0}
\end{minipage} 
\quad
\begin{minipage}[t]{0.48\linewidth}
\centering
 \includegraphics[scale=0.5]{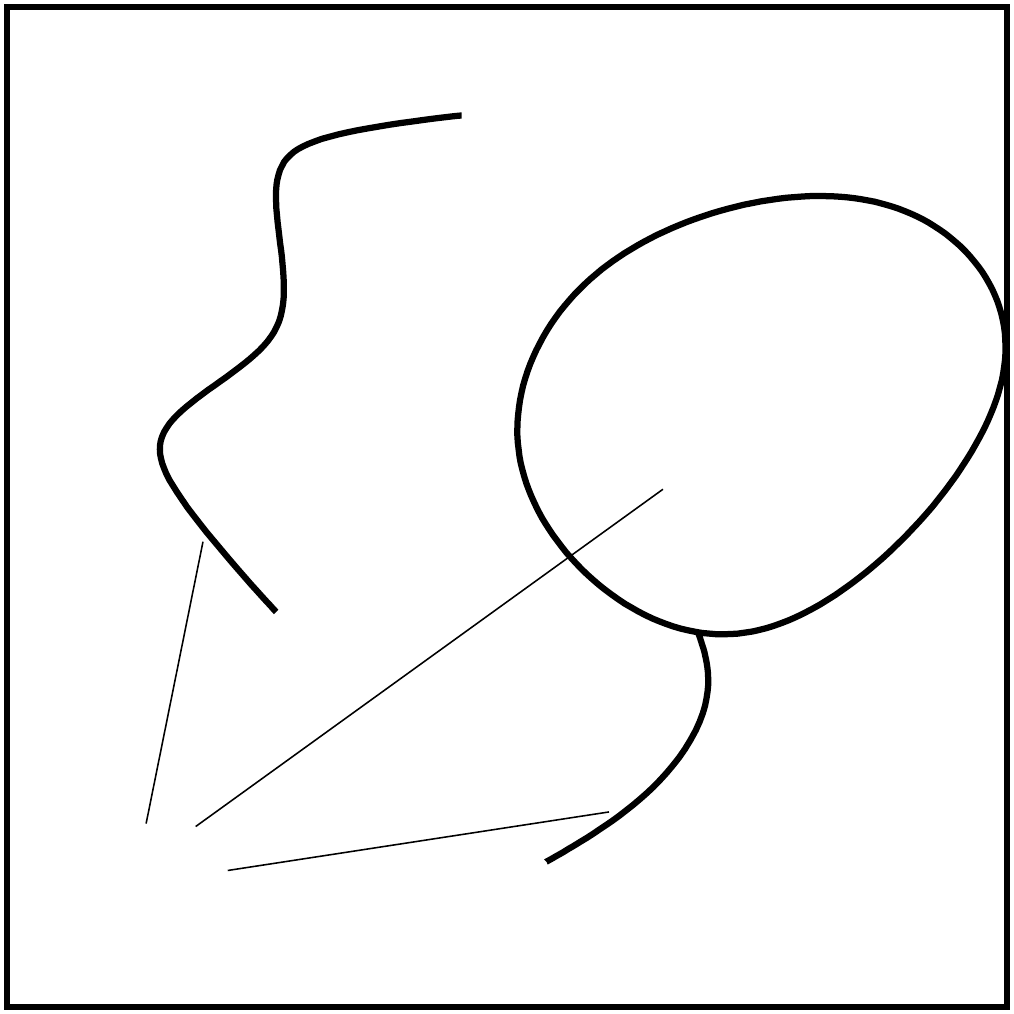}
 \put(-12,5){$\Omega$}
 \put(-130,17){$\AA$}
 \caption{Active set not feasible for Assumption \ref{assu:active}}\label{fig:cap1}
\end{minipage}
\end{figure}

With the help of Assumption \ref{assu:ycont} and \ref{assu:active} we can now prove the following 
infinite dimensional counterpart to \eqref{eq:rabl2a}:

\begin{lemma}\label{lem:lambdanull}
 Let $u, h \in L^p(\Omega)$, $p > \max\{d/2,1\}$, be given.
 Assume that $u$ is such that Assumptions \ref{assu:ycont} and \ref{assu:active} are fulfilled. Then
 \begin{equation*}
  \dual{\lambda}{v} = 0 \quad \text{for all } v \in V \text{ with } v(x) = 0 \text{ a.e.\ in }\AA
 \end{equation*}  
 holds true.
\end{lemma}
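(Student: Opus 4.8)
The plan is to reinterpret the claim as an annihilator statement and then reduce it to a density result in $V$. Writing $V_E := \{v \in V : v = 0 \text{ a.e.\ in } E\}$, Lemma \ref{lem:omega_rho} says precisely that $\lambda$ annihilates $V_{\AA_\rho}$ for every $\rho > 0$, hence $\lambda$ annihilates the closed subspace $W := \clos{\bigcup_{\rho>0} V_{\AA_\rho}}$ (closure taken in $V$). Since $\AA_\rho \supseteq \AA$ gives $V_{\AA_\rho}\subseteq V_\AA$, one has $W \subseteq V_\AA$, and the assertion $\dual{\lambda}{v}=0$ for all $v\in V_\AA$ will follow once I show the reverse inclusion, i.e.\ that every $v\in V$ with $v=0$ a.e.\ in $\AA$ can be approximated in $V$ by functions that vanish a.e.\ on some $\AA_\rho$. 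Because $\AA_0$ has zero capacity and therefore zero measure, the condition $v=0$ a.e.\ in $\AA$ is the same as $v=0$ a.e.\ in $\AA_1$; so it is really the thick part $\AA_1$ together with the sublevel layer $\{0<|y|\le\rho\}$ that has to be handled.

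I would carry out the approximation in three steps. First, by truncating $v$ at height $M$ and letting $M\to\infty$, and using that truncation preserves the vanishing on $\AA_1$ and converges in $V$, I reduce to $v\in V\cap L^\infty(\Omega)$. Second, I remove the zero-capacity part: since $\capa(\AA_0)=0$, there are capacitary cut-offs $\phi_k\in V$ with $0\le\phi_k\le 1$, $\phi_k=1$ on a neighbourhood of $\AA_0$ and $\norm{\phi_k}_V\to 0$; replacing $v$ by $(1-\phi_k)v$ and exploiting $v\in L^\infty$ to bound $\norm{v\,\nabla\phi_k}_{L^2}\le\norm{v}_{L^\infty}\norm{\phi_k}_V$, one obtains $(1-\phi_k)v\to v$ in $V$, with $(1-\phi_k)v$ vanishing on a neighbourhood of $\AA_0$ and still vanishing a.e.\ on $\AA_1$. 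Interleaving $k$ with $\rho$, this lets me assume that $v$ vanishes near $\AA_0$, so that for small $\rho$ the only remaining portion of $\AA_\rho$ on which $v$ may fail to vanish is the collar $\{0<|y|\le\rho\}$ adjacent to $\partial\AA_1$ and bounded away from $\AA_0$.

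The third step, and the main obstacle, is the collar cut-off. Here I would set $v_\rho := \psi_\rho(y)\,v$, where $\psi_\rho:\R\to[0,1]$ is smooth with $\psi_\rho(s)=0$ for $|s|\le\rho$, $\psi_\rho(s)=1$ for $|s|\ge 2\rho$ and $|\psi_\rho'|\le C/\rho$; then $v_\rho$ vanishes a.e.\ on $\AA_\rho$ by construction, and $v_\rho\in V$ since $y$ is bounded. The term $\psi_\rho(y)\nabla v$ converges to $\nabla v$ in $L^2$ by dominated convergence, using $\psi_\rho(y)\to\mathbf{1}_{\{y\ne 0\}}$ together with the fact that $v=0$, hence $\nabla v=0$, a.e.\ on $\{y=0\}$. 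The delicate term is $\psi_\rho'(y)\,v\,\nabla y$, supported in $\{\rho\le|y|\le 2\rho\}$, whose squared $L^2$-norm is controlled by $\rho^{-2}\int_{\{\rho\le|y|\le 2\rho\}}|\nabla y|^2\,|v|^2\,\dx$. To drive this to zero, at least along a subsequence $\rho_k\searrow 0$, I would invoke the regularity of the active set: on the collar $\partial\AA_1$ is a regular boundary satisfying the cone condition and, by Lemma \ref{lem:inactivedist}, one may argue separately on the $\{y>0\}$- and $\{y<0\}$-sides, where $y$ has a definite sign and, by elliptic regularity for $Ay=u-q$ with $u\in L^p$ and $|q|\le 1$, enjoys enough local regularity to compare the sublevel sets $\{|y|\le\rho\}$ with $\dist(\cdot,\AA_1)$. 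Combining this comparison with a Hardy inequality for $v$ on the regular domain $\JJ$, available because $v$ has zero trace on $\partial\AA_1$, I expect $\rho^{-2}\int_{\{\rho\le|y|\le 2\rho\}}|\nabla y|^2|v|^2\,\dx\to 0$ along a suitable sequence, so that $v_{\rho_k}\to v$ in $V$. Since each $v_{\rho_k}$ vanishes a.e.\ on $\AA_{\rho_k}$, Lemma \ref{lem:omega_rho} gives $\dual{\lambda}{v_{\rho_k}}=0$, and continuity of $\lambda\in V^*$ yields $\dual{\lambda}{v}=0$. I anticipate that reconciling the $y$-sublevel geometry with the distance-to-$\AA_1$ geometry, and securing the requisite local regularity of $y$ from only $u\in L^p$ with $p>\max\{d/2,1\}$, is the technically hardest point, and precisely the reason the strong structural Assumption \ref{assu:active} is imposed.
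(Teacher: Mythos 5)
Your reduction via Lemma \ref{lem:omega_rho} and the continuity of $\lambda\in V^*$ is sensible, and your first two steps (truncation, capacitary cut-off of $\AA_0$) are sound and in fact mirror a device in the paper's proof (the functions $w_m$ with $w_m=1$ near $\AA_0$, $\norm{w_m}_{H^1}\to 0$). The genuine gap is your third step, the collar cut-off $v_\rho=\psi_\rho(y)v$, and you correctly sense it: the estimate $\rho^{-2}\int_{\{\rho\le|y|\le 2\rho\}}|\nabla y|^2|v|^2\,\dx\to 0$ requires two ingredients that the hypotheses do not supply. First, comparing the sublevel set $\{|y|\le 2\rho\}$ with a distance collar $\{\dist(\cdot,\AA_1)\le C\rho\}$ (so that Hardy's inequality can absorb the factor $\rho^{-2}$) amounts to a linear nondegeneracy bound $|y(x)|\gtrsim\dist(x,\AA_1)$ near $\partial\AA_1$. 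Nothing in Assumptions \ref{assu:ycont} or \ref{assu:active} forbids $y$ from vanishing to arbitrarily high order at the free boundary (e.g.\ $y\sim\dist^2$), in which case $\{|y|\le\rho\}$ has width $\sim\rho^{1/2}\gg\rho$ and the weight $\rho^{-2}|v|^2$ is not dominated by $|v|^2/\dist^2$; Assumption \ref{assu:active} constrains the \emph{geometry} of $\AA$, not the rate at which $y$ leaves zero. Second, even granting the set comparison, you need $|\nabla y|$ to be locally bounded on the collar to invoke Hardy, but with $a_{ij},b_i,\gamma\in L^\infty$ and $u\in L^p$, $p>\max\{d/2,1\}$, De Giorgi--Nash theory gives only $y\in C^{0,\alpha}$; $\nabla y$ is merely an $L^2$ function, and no elliptic estimate under these hypotheses upgrades it. So step 3 cannot be completed as proposed, and with it the density of $\bigcup_{\rho>0}V_{\AA_\rho}$ in $V_\AA$ remains unproven.

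The paper avoids the collar altogether, and this is the idea you are missing: one should not ask the approximants to vanish on the fat set $\AA_\rho$. Since $v=0$ a.e.\ in $\AA_1$ and $\partial\JJ$ is regular (Assumption \ref{assu:active}\eqref{assu:active3}), $v$ has zero trace on $\partial\JJ$ and can be approximated in $V$ by $\varphi_k\in C^\infty_0(\JJ)$ with \emph{compact} support $\omega_k\subset\subset\JJ$. On the compact set $\omega_k\setminus\UU_m$ ($\UU_m$ a capacitary neighborhood of $\AA_0$) one has $\omega_k\setminus\UU_m\subset\II$, so by continuity of $y$ and Lemma \ref{lem:inactivedist} the function $y$ is bounded away from zero there; the mechanism of Lemma \ref{lem:omega_rho} then shows the difference quotients $(q_n-q)/t_n$ vanish \emph{identically} on $\omega_k\setminus\UU_m$ for $n$ large — no cut-off of $v$ in the region $\{0<|y|\le\rho\}$ is ever needed. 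The residual contribution over $\UU_m$ is controlled not by a density argument but by testing the difference of the state equations \eqref{eq:qdef} and \eqref{eq:qn} with $\varphi_k w_m\in V$, which yields $\dual{\lambda}{\varphi_k}\le c\,\norm{w_m}_{H^1(\Omega)}\norm{\varphi_k}_{W^{1,\infty}(\Omega)}$; letting $m\to\infty$ at fixed $k$ (here the smoothness of $\varphi_k$ is essential, since its $W^{1,\infty}$-norm multiplies the vanishing factor), then applying the same to $-\varphi_k$ and finally letting $k\to\infty$, gives $\dual{\lambda}{v}=0$. If you want to salvage your scheme, you would have to either prove the nondegeneracy and gradient bounds under extra hypotheses, or abandon the cut-off of $v$ in favor of this compact-support-plus-equation-testing argument.
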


\begin{proof}
 Let $v \in V$ with $v(x) = 0$ a.e.\ in $\AA$ be arbitrary. 
 By Assumption \ref{assu:active}\eqref{assu:active3} there are linear and continuous trace operators $\tau_j: H^1(\Omega) \to L^2(\partial\JJ)$ 
 and $\tau_a: H^1(\Omega) \to L^2(\partial\AA_1)$. 
 Due to $v = 0$ a.e.\ in $\AA_1$, we have $\tau_a v = 0$ and, by \eqref{eq:innererrand} and $v\in V$, thus $\tau_j v = 0$.
 Since $\partial\JJ$ is regular, there exists a sequence
 $\{\varphi_k\}_{k\in \N} \subset C^\infty_0(\JJ)$ with $\varphi_k \to v$ in $H^1(\JJ)$, see e.g.\ \cite[Lemma 1.33]{GajewskiGroegerZacharias}.
 In particular it holds
 \begin{equation*}
  \omega_k := \supp(\varphi_k) \subset\subset \JJ.
 \end{equation*}
 We extend $\varphi_k$ by zero outside $\JJ$ to obtain a function in $C^\infty_0(\Omega)$, 
 which we denote by the same symbol for simplicity. Because of $v = 0$ a.e.\ in $\AA_1$ it follows that
 \begin{equation}\label{eq:vconv}
  \varphi_k \stackrel{k\to\infty}{\longrightarrow} v \quad \text{in }V. 
 \end{equation}
 By construction we have $\JJ \subset \II \cup \AA_0$.
 Since $\AA_0$ has zero capacity, there is a sequence $\{w_m\}_{m\in \N} \subset V$ and 
 a sequence of open neighborhoods of $\AA_0$, denoted by $\{\UU_m\}_{m\in \N}\subset \Omega$, such that 
 \begin{gather*}
  w_m\geq 0 \text{ a.e.\ in } \Omega, \quad w_m = 1 \text{ a.e.\ in }\UU_m, \quad
  w_m \stackrel{m\to\infty}{\longrightarrow} 0 \text{ in } H^1(\Omega).
 \end{gather*}
 Now let $k,m\in \N$ be fixed but arbitrary and define 
 \begin{equation*}
  \II_m^+ := (\omega_k \setminus \UU_m) \cap \II^+, \quad   \II_m^- := (\omega_k \setminus \UU_m) \cap \II^-.
 \end{equation*}
 Since $\UU_m$ is open, $\omega\setminus\UU_m$ is closed. Moreover, in view of $\JJ = \II \cup \AA_0$, 
 it holds $\omega_k\setminus\UU_m \subset \II$.
 Thus, Lemma \ref{lem:inactivedist} and the boundedness of $\Omega$ yield that $\II_m^+$ and $\II_m^-$ are compact.
 The continuity of $y$ therefore implies that there is $\xi \in \II_m^+$ such that
 \begin{equation*}
  y(\xi) = \min_{x\in \II_m^+} y(x)
 \end{equation*}
 and, due to $\xi \in \II^+$, one obtains $\rho_m^+ := y(\xi) > 0$. Analogously one derives 
 $\rho_m^- := \max_{x\in \II_m^-} y(x) < 0$. As in the proof of Lemma \ref{lem:omega_rho} one proves the existence of $N^+_m\in \N$ 
 such that for all $n\geq N^+_m$ there holds
 \begin{equation*}
  \frac{q_n(x) - q(x)}{t_n} = 0 \quad \text{f.a.a.\ $x\in \Omega$ with } y(x)\geq \rho_m^+, 
 \end{equation*}
 see \eqref{eq:qdiffnull}. Clearly, there is $N_m^- \in \N$ so that the same equation holds for every $n\geq N_m^-$ and almost all 
 $x\in \Omega$ with $y(x) \leq \rho_m^-$. Consequently, we obtain
 \begin{equation}\label{eq:diffqsupp}
  \frac{q_n - q}{t_n} = 0 \quad \text{a.e.\ in }\omega_k \setminus\UU_m = \II_m^+ \cup \II_m^-,
 \end{equation}
 provided that $n\geq N_m := \max\{N_m^+, N_m^-\}$.
 
 Thanks to \eqref{eq:diffqsupp} and $w_m = 1$ a.e.\ in $\UU_m$, it follows
 \begin{equation}\label{eq:umomega}
 \begin{aligned}
  \int_\Omega \frac{q_n - q}{t_n}\, \varphi_k \, w_m\, dx 
  &= \int_{\omega_k \setminus \UU_m} \frac{q_n - q}{t_n}\, \varphi_k\, w_m \,dx 
  + \int_{\UU_m} \frac{q_n - q}{t_n}\, \varphi_k \, w_m dx\\
  &= \int_{\UU_m} \frac{q_n - q}{t_n}\, \varphi_k \,dx 
  \qquad \forall\, n \geq N_m.
 \end{aligned}
 \end{equation}
 On the other hand $\varphi_k w_m \in V$ is a feasible test function for \eqref{eq:qdef} and \eqref{eq:qn}. 
 If we insert this test function and subtract the arising equation, then \eqref{eq:umomega} together with  
 H\"older's inequality and Sobolev embeddings yield
 \begin{equation*}
 \begin{aligned}
  &\int_{\UU_m} \frac{q_n - q}{t_n}\, \varphi_k \,dx 
  = \int_\Omega \frac{q_n - q}{t_n}\, \varphi_k \, w_m\, dx \\
  &\quad = - \int_\Omega \nabla \Big(\frac{y_n - y}{t_n}\Big) \cdot \nabla(\varphi_k w_m)\,dx + \int_\Omega h\,\varphi_k\,w_m\,dx\\
  &\quad \leq 2 \Big\|\frac{y_n - y}{t_n}\Big\|_{H^1(\Omega)} \|w_m\|_{H^1(\Omega)} \|\varphi_k\|_{W^{1,\infty}(\Omega)} 
  + c\,\|h\|_{L^2(\Omega)} \|w_m\|_{H^1(\Omega)} \|\varphi_k\|_{H^1(\Omega)}
 \end{aligned}
 \end{equation*}
 for all $n\geq N_m$. Therefore, in view of \eqref{eq:diffqsupp}, the weak convergence (and thus boundedness) of $(y_n - y)/t_n$ gives
 \begin{equation*}
  \int_\Omega \frac{q_n - q}{t_n}\, \varphi_k \,dx 
  = \int_{\UU_m} \frac{q_n - q}{t_n}\, \varphi_k \,dx 
  \leq c\,\|w_m\|_{H^1(\Omega)} \|\varphi_k\|_{W^{1,\infty}(\Omega)} 
 \end{equation*}
 for all $n\geq N_m$ and thus 
 \begin{equation*}
  \dual{\lambda}{\varphi_k} = \lim_{n\to \infty} \int_\Omega \frac{q_n - q}{t_n}\, \varphi_k \,dx 
  \leq c\,\|w_m\|_{H^1(\Omega)} \|\varphi_k\|_{W^{1,\infty}(\Omega)}.
 \end{equation*}  
 Due to $w_m \to 0$ in $H^1(\Omega)$, passing to the limit $m\to\infty$ yields $\dual{\lambda}{\varphi_k} \leq 0$. 
 The above arguments also apply to $-\varphi_k$ so that $\dual{\lambda}{\varphi_k} = 0$. 
 Since $k\in \N$ was arbitary, this equation holds for every $k\in \N$ and thus we can pass to the limit 
 $k\to \infty$. The convergence in \eqref{eq:vconv} then gives the assertion.
\end{proof}

Similarly to \eqref{eq:conefinite}, we define 
\begin{equation}\label{eq:KKae}
\begin{aligned}
 \KK(y) &:= \{v\in V: \;\, v(x) = 0 \text{ a.e.\ in } \AA_s,\; v(x)q(x) \geq 0 \text{ a.e.\ in } \BB\}\\
 &=
 \begin{aligned}[t]
  \{v\in V: \;\, & v(x) = 0 \text{ a.e., where } |q(x)| < 1,\\ 
  & v(x)q(x) \geq 0 \text{ a.e., where } |q(x)| = 1 \text{ and } y(x) = 0\}
 \end{aligned} 
\end{aligned}
\end{equation}
This set will be the feasible set of the VI belonging to the directional derivative of $S$ (see Theorem \ref{thm:ablvi} below).
As seen in the proof of Theorem \ref{thm:rablfinite}, in the finite dimensional setting, there holds $\lambda^\top v \leq 0$ for all $v\in K(y)$, 
see \eqref{eq:lambdasign}. 
The infinite dimensional analogon is also true, provided that Assumptions \ref{assu:ycont} and \ref{assu:active} hold, 
as the following lemma shows.

\begin{lemma}\label{lem:signlambda}
 Let $u,h \in L^p(\Omega)$ with $p > \max\{d/2,1\}$ be given, and assume that $u$ is such that 
 Assumptions \ref{assu:ycont} and \ref{assu:active} are fulfilled. Then there holds 
 \begin{equation*}
  \dual{\lambda}{v} \leq 0 \quad \text{for all } v\in \KK(y).
 \end{equation*}
\end{lemma}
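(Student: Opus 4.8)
The plan is to pass to the limit in the relation $\int_\Omega \frac{q_n-q}{t_n}\,v\,dx \to \dual{\lambda}{v}$ and to combine the sign of the slack difference quotient with the structure of $\KK(y)$. First I would record the pointwise information contained in the slackness condition \eqref{eq:slacklike}: since $|q_n|\le 1$ and, by Definition \ref{def:sets}, $q=1$ on $\II^+\cup\BB^+$ while $q=-1$ on $\II^-\cup\BB^-$, we have
\[
 \frac{q_n-q}{t_n}\le 0 \ \text{ a.e.\ where } q=1, \qquad \frac{q_n-q}{t_n}\ge 0 \ \text{ a.e.\ where } q=-1 .
\]
For $v\in\KK(y)$ this already makes the integrand nonpositive on the whole active set $\AA=\AA_s\cup\BB$: on $\AA_s$ it vanishes because $v=0$ there, and on $\BB=\BB^+\cup\BB^-$ the sign condition $v\,q\ge 0$ forces $\frac{q_n-q}{t_n}\,v\le 0$. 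Decomposing $\Omega=\AA_s\cup\BB\cup\II^+\cup\II^-$, it therefore only remains to control the two inactive contributions $\int_{\II^\pm}\frac{q_n-q}{t_n}\,v\,dx$, where the sign of $v$ is not prescribed.

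The decisive step is to show that the ``wrong-sign'' pieces of these inactive integrals vanish in the limit. Writing $v=v^+-v^-$ with $v^\pm=\max\{\pm v,0\}\in V$, I note that $v^-$ vanishes a.e.\ on $\AA_s\cup\BB^+$ (there $v\ge 0$) and, symmetrically, $v^+$ vanishes a.e.\ on $\AA_s\cup\BB^-$. Using Lemma \ref{lem:inactivedist} together with \eqref{eq:IplusB}, I would choose a Lipschitz cut-off $\zeta^+$ with $\zeta^+\equiv 1$ on $\clos{\II^+}$ and $\supp\zeta^+\subset\{x:q(x)\ge 1/2\}$, so that $\supp\zeta^+$ is disjoint from $\BB^-$ and from $\clos{\II^-}$ (on both of which $q=-1$); the cut-off $\zeta^-$ is chosen analogously. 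Then $\zeta^+v^-\in V$ vanishes a.e.\ on $\AA$ — on $\AA_s\cup\BB^+$ because $v^-=0$, on $\BB^-$ because $\zeta^+=0$ — so Lemma \ref{lem:lambdanull} applies and gives $\dual{\lambda}{\zeta^+v^-}=0$.

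Because $\zeta^+\equiv 1$ on $\II^+$, $\zeta^+\equiv 0$ on $\II^-$, and $\zeta^+v^-=0$ a.e.\ on $\AA$, this test function collapses the integral precisely to the inactive phase $\II^+$:
\[
 \int_{\II^+}\frac{q_n-q}{t_n}\,v^-\,dx = \int_\Omega \frac{q_n-q}{t_n}\,\zeta^+v^-\,dx \longrightarrow \dual{\lambda}{\zeta^+v^-}=0,
\]
where the convergence is just the weak-$*$ convergence $(q_n-q)/t_n\weak\lambda$ in $V^*$ tested against $\zeta^+v^-\in V$. The same argument with $\zeta^-$ yields $\int_{\II^-}\frac{q_n-q}{t_n}\,v^+\,dx\to 0$. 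Assembling the pieces, on $\II^+$ the factor $\frac{q_n-q}{t_n}\le 0$ gives $\int_{\II^+}\frac{q_n-q}{t_n}\,v^+\,dx\le 0$ while the $v^-$-part tends to $0$; on $\II^-$ the factor $\frac{q_n-q}{t_n}\ge 0$ gives $\int_{\II^-}\frac{q_n-q}{t_n}\,v^-\,dx\ge 0$ while the $v^+$-part tends to $0$. Together with the nonpositive active contribution this yields $\limsup_n\int_\Omega\frac{q_n-q}{t_n}\,v\,dx\le 0$, hence $\dual{\lambda}{v}\le 0$.

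The hard part is exactly this inactive boundary layer. Since $(q_n-q)/t_n$ is only bounded in $V^*$ and unbounded in every Lebesgue space, the integrals over $\II^\pm$ do not vanish by weak convergence alone; the layer $\{0<|y|\lesssim t_n\}$ adjacent to $\partial\AA_1$ carries an $\OO(1)$ mass whose pairing with $v$ must be controlled by the \emph{sign} of the trace of $v$ on $\BB$. Making this rigorous is what forces Assumptions \ref{assu:ycont} and \ref{assu:active}: continuity of $y$ and $q$, the regular bulk set $\AA_1$, the zero-capacity remainder $\AA_0$, and above all the strict separation of $\II^+$ and $\II^-$ from Lemma \ref{lem:inactivedist}, which is what allows the cut-offs $\zeta^\pm$ to isolate each inactive phase so that Lemma \ref{lem:lambdanull} can absorb the wrong-sign contribution.
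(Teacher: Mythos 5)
Your proof is correct, and it shares the paper's overall skeleton --- exploit the a.e.\ signs $(q_n-q)/t_n \leq 0$ where $q = 1$ and $(q_n-q)/t_n \geq 0$ where $q = -1$, dispose of $\AA_s$ and $\BB$ directly from the definition of $\KK(y)$, and annihilate the wrong-sign inactive contributions through Lemma \ref{lem:lambdanull} --- but the device at the crucial step is genuinely different. The paper routes the argument through the sets $\JJ^\pm$: it proves $\clos{\JJ}=\clos{\II}$, splits $\JJ$ into $\JJ^+\cup\JJ^-$ with regular boundaries (this is where Assumption \ref{assu:active}\eqref{assu:active3} enters directly), shows $\min\{0,v\}\in H^1_0(\JJ^+)$ via the zero trace forced by \eqref{eq:incl}, and then extends by zero to obtain $\chi_{\JJ^+}\min\{0,v\}\in V$ as an admissible test function for Lemma \ref{lem:lambdanull}. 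You instead multiply by Lipschitz cut-offs $\zeta^\pm$ supported in the $\delta$-neighborhoods furnished by Lemma \ref{lem:inactivedist} and \eqref{eq:IplusB}: since $\zeta^+ v^-\in V$ automatically (bounded Lipschitz function times $H^1_0$-function) and vanishes a.e.\ on $\AA$ (because $v^-=0$ a.e.\ on $\AA_s\cup\BB^+$ and $\zeta^+=0$ on $\BB^-$, where $q=-1$ while $q\geq 1/2$ on $\supp\zeta^+$), Lemma \ref{lem:lambdanull} applies directly, and the same containment $\supp\zeta^+\subset\{q\geq 1/2\}$ makes $\zeta^+$ vanish on $\II^-$, so the pairing collapses exactly to $\int_{\II^+}\frac{q_n-q}{t_n}\,v^-\,dx$. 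This buys you a shorter proof: no construction of $\JJ^\pm$, no trace/zero-extension argument, and no explicit handling of $\AA_0$ inside this lemma (it is entirely absorbed into Lemma \ref{lem:lambdanull}); you also only need $q=\pm1$ on $\II^\pm$, which is immediate from \eqref{eq:slacklike} and continuity, rather than a.e.\ on $\JJ^\pm$. Nothing is gained in generality --- Assumptions \ref{assu:ycont} and \ref{assu:active} still enter through Lemmas \ref{lem:inactivedist} and \ref{lem:lambdanull}, exactly as your closing paragraph observes --- but the delicate geometric bookkeeping of the paper's proof is avoided. One cosmetic point: the convergence you invoke at the end is weak convergence of $(q_n-q)/t_n$ in $V^*$ (equivalently weak-$*$, since $V$ is reflexive), as established after \eqref{eq:weakV}.
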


\begin{proof}
 Let $v\in \KK(y)$ be fixed but arbitrary.
 Due to $\AA_s \cup \BB \cup \II = \Omega$ and $v(x) = 0$ a.e.\ in $\AA_s$, we obtain
 \begin{equation}\label{eq:intest}
  \int_\Omega \frac{q_n - q}{t_n}\,v\,dx
  = \int_{\BB^+} \frac{q_n - 1}{t_n}\,v\,dx 
  + \int_{\BB^-} \frac{q_n + 1}{t_n}\,v\,dx 
  + \int_{\II} \frac{q_n - q}{t_n}\,v\,dx,
 \end{equation}
 Since $q_n \in [-1,1]$ a.e.\ in $\Omega$ and $q \,v \geq 0$ a.e.\ in $\BB$, which implies $v \geq 0$ a.e.\ in $\BB^+$ and  
 $v \leq 0$ a.e.\ in $\BB^-$, we can further estimate
 \begin{equation*}
  \int_\Omega \frac{q_n - q}{t_n}\,v\,dx \leq \int_{\II} \frac{q_n - q}{t_n}\,v\,dx = \int_{\JJ} \frac{q_n - q}{t_n}\,v\,dx, 
 \end{equation*}
 where $\JJ$ is the set from Assumption \ref{assu:active}\eqref{assu:active3}. For the last equality we used that 
 $\JJ = \II \cup \AA_0$ and $\AA_0$ has zero capacity, thus zero Lebesgue-measure. 

 We now prove that $\JJ = \JJ^+ \cup \JJ^-$, where $\JJ^+$ and $\JJ^-$ possess regular boundaries and coincide with 
 $\II^+$ and $\II^-$ up to sets of zero capacity.
 For this purpose, we show $\clos{\JJ} = \clos{\II}$. Due to $\II \subset \JJ$, we clearly have $\clos{\II}\subseteq\clos{\JJ}$. 
 Let $\xi \in \clos{\JJ}$ be arbitrary. Then there is a sequence $\{x_k\}_{k\in \N}\subset \JJ$ so that $x_k \to \xi$. 
 If $\{x_k\}$ contains a subsequence in $\II$, we immediately obtain $\xi \in \clos{\II}$. So assume the contrary, 
 i.e., in view of $\JJ = \II \cup \AA_0$, $x_k\in \AA_0$ for all $k\in \N$ sufficiently large. W.l.o.g.\ we assume $\{x_k\}\subset \AA_0$ 
 for the whole sequence. Since $\AA_0$ has zero capacity, thus zero measure, there is, for each $x_k$, 
 a sequence $\{x_k^{(m)}\}_{m\in \N} \subset \Omega \setminus \AA_0$ with $x^{(m)}_k \to x_k$ for $m\to \infty$. 
 Since $x_k^{(m)} \notin \AA_0$, we have either $x^{(m)}_k \in \AA_1$ or  $x^{(m)}_k \in \II$. If $\{x_k^{(m)}\}$ would contain a 
 subsequence in $\AA_1$, then the closedness of $\AA_1$ would imply $x_k \in \AA_1$ in contradiction to $x_k \in \AA_0$. 
 Thus we may w.l.o.g.\ assume that $\{x_k^{(m)}\}\subset \II$. Therefore, there is a diagonal sequence $\{x_k^{(m(k))}\} \subset \II$ 
 converging to $\xi$, which gives $\xi \in \clos{\II}$. Hence we have shown
 \begin{equation*}
  \clos{\JJ}=\clos{\II} = \clos{\II^+} \cup \clos{\II^-}
 \end{equation*}  
 with $\II^+$ and $\II^-$ as defined in \eqref{eq:defsets}. Since $\clos{\II^+}$ and $\clos{\II^-}$ have positive distance from 
 each other by Lemma \ref{lem:inactivedist}, there exist sets $\JJ^+, \JJ^-$ such that $\JJ^+ \cup \JJ^- = \JJ$ and 
 $\dist(\JJ^+, \JJ^-) > \delta$. Moreover, thanks to Lemma \ref{lem:inactivedist} and 
 $\JJ = \II \cup \AA_0$ with $\capa(\AA_0) = 0$, $\JJ^+$ differs from $\II^+$ only by a set of zero capacity and the same holds for 
 $\JJ^-$ and $\II^-$. Finally, because of $\dist(\JJ^+, \JJ^-) > \delta$, Assumption \ref{assu:active}\eqref{assu:active3} yields that 
 $\JJ^+$, $\JJ^-$, $\Omega \setminus \JJ^+$, and $\Omega\setminus\JJ^-$ possess regular boundaries. 
 (This actually implies that $\JJ^\pm = \interior(\clos{\II^\pm}).$)
 
 Since $\JJ^+$ differs from $\II^+$ only on a set of zero measure, the definition of $\II^+$ and the slackness condition \eqref{eq:slacklike} 
 imply $q = 1$ a.e.\ in $\JJ^+$, and analogously $q = -1$ a.e.\ in $\JJ^-$. Thus \eqref{eq:intest} can be further estimated by
 \begin{align}
  \int_\Omega \frac{q_n - q}{t_n}\,v\,dx 
  &\leq \int_{\JJ^+} \underbrace{\frac{q_n - 1}{t_n}}_{\leq 0}\,\underbrace{\max\{0,v\}}_{\geq 0}\,dx 
  + \int_{\JJ^+} \frac{q_n - q}{t_n}\,\min\{0,v\}\,dx \nonumber\\
  &\quad + \int_{\JJ^-} \underbrace{\frac{q_n + 1}{t_n}}_{\geq 0}\,\underbrace{\min\{0,v\}}_{\geq 0}\,dx 
  + \int_{\JJ^-} \frac{q_n - q}{t_n}\,\max\{0,v\}\,dx \nonumber\\
  & \leq \int_{\JJ^+} \frac{q_n - q}{t_n}\,\min\{0,v\}\,dx + \int_{\JJ^-} \frac{q_n - q}{t_n}\,\max\{0,v\}\,dx. \label{eq:intest2}
 \end{align}
 Next we show that $\min\{0,v\} \in H^1_0(\JJ^+)$ and $\max\{0,v\} \in H^1_0(\JJ^-)$. The proof of Lemma \ref{lem:inactivedist} shows 
 \begin{equation}\label{eq:incl}
  \big(\II^+ + B(0,\varepsilon)\big) \setminus \II^+ 
  \subset \{x\in \Omega: q(x) \geq 1/2,\; y(x) = 0\} \subset \AA_s \cup \BB^+,
 \end{equation}
 see \eqref{eq:IplusB}.
 Because of $v\in \KK(y)$ we have $q\,v \geq 0$ a.e.\ in $\AA_s \cup \BB^+$ and thus \eqref{eq:incl} gives 
 $v \geq 0$ a.e.\ in $(\II^+ + B(0,\varepsilon)\big) \setminus \II^+$. Since $\II^+$ and $\JJ^+$ differ only up to a set of zero 
 measure, we thus get
 \begin{equation*}
  \min\{0,v\} = 0 \quad \text{a.e.\ in } (\JJ^+ + B(0,\varepsilon)) \setminus \JJ^+.
 \end{equation*}
 The regularity of $\partial\JJ^+$ and $\partial(\Omega\setminus\JJ^+)$ therefore gives
 \begin{equation*}
  \min\{0,v(x)\} = 0\quad \text{a.e.\ on }\partial\JJ^+,
 \end{equation*}
 and thus $\min\{0,v\} \in H^1_0(\JJ^+)$. An analogous argument shows that $\max\{0,v\} \in H^1_0(\JJ^-)$.
 Due to the zero trace and the regularity of $\partial\JJ^+$ by Assumption \ref{assu:active}\eqref{assu:active3}, 
 we can extend $\min\{0,v\}$ by zero outside $\JJ^+$ to obtain a function in $V$, 
 i.e., $\chi_{\JJ^+}\min\{0,v\} \in V$, where $\chi_{\JJ^+}$ denotes the characteristic function of $\JJ^+$.
 Thus the weak convergence $(q_n - q)/t_n \weak \lambda$ in $V^*$ gives
 \begin{equation*}
  \int_{\JJ^+} \frac{q_n - q}{t_n}\,\min\{0,v\}\,dx 
  = \int_{\Omega} \frac{q_n - q}{t_n}\,\chi_{\JJ^+}\min\{0,v\}\,dx 
  \to \dual{\lambda}{\chi_{\JJ^+}\min\{0,v\}}.
 \end{equation*}  
 Since $\chi_{\JJ^+}\min\{0,v\} = 0$ a.e.\ in $\AA \subset \Omega\setminus\JJ^+$, Lemma \ref{lem:lambdanull} yields 
 $\dual{\lambda}{\chi_{\JJ^+}\min\{0,v\}} = 0$. Analogously
 \begin{equation*}
  \int_{\JJ^-} \frac{q_n - q}{t_n}\,\max\{0,v\}\,dx 
  \to \dual{\lambda}{\chi_{\JJ^-}\max\{0,v\}} = 0
 \end{equation*}  
 is obtained. Therefore, in view of \eqref{eq:intest2}, we finally arrive at $\dual{\lambda}{v} \leq 0$
 and, since $v\in \KK(y)$ was arbitrary, this proves the assertion.
\end{proof}

Now we are finally in the position to prove the main result of this section covering the ''weak directional differentiability`` of 
the solution operator associated with the VI in \eqref{eq:vi}.

\begin{theorem}\label{thm:ablvi}
 Let $u,h \in L^p(\Omega)$ with $p > \max\{d/2,1\}$ be given. Suppose further that Assumptions \ref{assu:ycont} 
 and \ref{assu:active} are fulfilled by $y = S(u)$ and the associated slack variable $q$. Then there holds 
 \begin{equation}\label{eq:weaklim}
  \frac{S(u + t\,h) - S(u)}{t} \weak \eta \quad \text{in } V, \quad \text{as } t \searrow 0,
 \end{equation}
 where $\eta \in V$ solves the following VI of first kind:
 \begin{equation}\label{eq:ablvi}
 \begin{aligned}
  \eta \in \KK(y),\quad \dual{A\eta}{v-\eta} \geq \dual{h}{v-\eta} \quad \forall\, v\in \KK(y)
 \end{aligned}
 \end{equation}
 with $\KK(y)$ as defined in \eqref{eq:KKae}.
\end{theorem}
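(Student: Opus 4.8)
The plan is to follow the blueprint of the finite-dimensional Theorem~\ref{thm:rablfinite}, since all the genuinely hard work has already been carried out in the preceding lemmas. The identity $\lambda = h - A\eta$, established when passing to the limit in the slack equation, plays the role of the finite-dimensional relation \eqref{eq:rabl2a}, while the three sign/feasibility statements in Lemmas~\ref{lem:etafeas}, \ref{lem:etalambdanull}, and \ref{lem:signlambda} replace the componentwise relations used in the finite-dimensional argument. I would begin by recording feasibility: Lemma~\ref{lem:etafeas} shows $\eta = 0$ a.e.\ on $\AA_s$ and $\eta\,q \geq 0$ a.e.\ on $\BB$, which is precisely membership $\eta \in \KK(y)$ by the definition \eqref{eq:KKae}.

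The second step is to pin down the value $\dual{\lambda}{\eta}$. Lemma~\ref{lem:etalambdanull} gives $\dual{\lambda}{\eta} \geq 0$, whereas applying Lemma~\ref{lem:signlambda} to the admissible direction $v = \eta \in \KK(y)$ gives $\dual{\lambda}{\eta} \leq 0$; together these force $\dual{\lambda}{\eta} = 0$. With this at hand, the variational inequality follows by a one-line computation: for arbitrary $v \in \KK(y)$, testing $\lambda = h - A\eta$ against $v - \eta$ yields
\begin{equation*}
 \dual{h}{v-\eta} = \dual{A\eta}{v-\eta} + \dual{\lambda}{v} - \dual{\lambda}{\eta} = \dual{A\eta}{v-\eta} + \dual{\lambda}{v} \leq \dual{A\eta}{v-\eta},
\end{equation*}
where the final inequality is exactly the sign condition $\dual{\lambda}{v} \leq 0$ from Lemma~\ref{lem:signlambda}. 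This is precisely \eqref{eq:ablvi}.

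It then remains to upgrade the subsequential weak limit to convergence of the whole difference quotient. Since $A$ is coercive, hence strongly monotone, and $\KK(y)$ is a nonempty closed convex cone, the operator $A + \partial I_{\KK(y)}$ is maximal monotone, so \eqref{eq:ablvi} admits a unique solution. As every weak accumulation point $\eta$ of the bounded family $(y_n - y)/t_n$ solves the same VI, the limit is unique, and the standard subsequence-uniqueness argument upgrades \eqref{eq:weakV} to weak convergence of the entire sequence, thereby justifying the a priori reduction to a subsequence.

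The proof of the theorem itself is thus essentially an assembly step, and the only subtle bookkeeping point I anticipate is ensuring that $v = \eta$ is genuinely admissible in Lemma~\ref{lem:signlambda} (guaranteed by the feasibility established in the first step), so that the sharp identity $\dual{\lambda}{\eta} = 0$ can be extracted rather than merely the one-sided bound $\dual{\lambda}{\eta} \geq 0$. The real obstacles — controlling $\lambda$ away from and on the biactive set and proving the sign condition $\dual{\lambda}{v} \leq 0$ — have already been absorbed into Lemmas~\ref{lem:lambdanull} and \ref{lem:signlambda}, which is where the structural Assumptions~\ref{assu:ycont} and \ref{assu:active} on the active set are actually consumed; the theorem merely packages their consequences into the VI of first kind.
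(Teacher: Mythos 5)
Your proposal is correct and follows essentially the same route as the paper: feasibility $\eta \in \KK(y)$ from Lemma~\ref{lem:etafeas}, the variational inequality from $\lambda = h - A\eta$ combined with the sign information of Lemmas~\ref{lem:etalambdanull} and \ref{lem:signlambda}, and uniqueness for the VI of first kind to upgrade subsequential to full weak convergence. The only cosmetic difference is that you extract the sharp identity $\dual{\lambda}{\eta} = 0$ by testing Lemma~\ref{lem:signlambda} with $v = \eta$, whereas the paper simply combines the two one-sided bounds $\dual{\lambda}{\eta} \geq 0$ and $\dual{\lambda}{v} \leq 0$ directly, which already suffices.
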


\begin{proof}
 Lemma \ref{lem:etafeas} yields $\eta \in \KK(y)$. Furthermore, since $A \eta + \lambda = h$, 
 Lemmas \ref{lem:etalambdanull} and \ref{lem:signlambda} give
 \begin{equation*}
  \dual{A\eta}{v-\eta} - \dual{h}{v-\eta} 
  = \dual{\lambda}{\eta} - \dual{\lambda}{v} \geq 0
 \end{equation*}
 for all $v\in \KK(y)$, which is just the VI in \eqref{eq:ablvi}.

 Since $\KK(y)$ is nonempty, convex, and closed and $A$ is bounded and coercive, 
 standard arguments yields existence and uniqueness for this VI of first kind. 
 Thus the weak limit $\eta$ is unique, which implies the weak convergence of the whole sequence.
\end{proof}

\begin{definition}
 With a little abuse of notation we call the weak limit $\eta$ in \eqref{eq:weaklim} \emph{weak directional derivative} 
 and denote it by $\eta = S_w'(u;h)$.
\end{definition}

\begin{remark}
 If $\BB$ has zero measure, then $\KK(y)$ turns into
 \begin{equation*}
  \KK(y) =\{v\in V: \;\, v(x) = 0 \text{ a.e.\ in } \AA_s\},
 \end{equation*}
 i.e., a linear and closed subspace of $V$. Thus, in this case, \eqref{eq:ablvi} becomes an equation. 
 If $\AA_s$ possesses a regular boundary, then this equation is equivalent to 
 \begin{equation*}
  A \eta  = h \quad \text{in } \II
  \quad\text{and}\quad \eta = 0\quad \text{a.e.\ in }\AA = \AA_s.
 \end{equation*}
\end{remark}

\begin{remark}
 It is very likely that Theorem \ref{thm:ablvi} could be proven without the restrictive Assumption \ref{assu:active}, 
 if the weak limit $\eta$ would satisfy the conditions in \eqref{eq:etaAs} and \eqref{eq:etaB} not only almost everywhere, 
 but \emph{quasi-everywhere} in $\Omega$. In this case, the feasible set of \eqref{eq:ablvi} would read
 \begin{equation*}
 \begin{aligned}
  \KK := \{v\in V: \;\, & v(x) = 0 \text{ q.e., where } |q(x)| < 1,\\ 
  & v(x)q(x) \geq 0 \text{ q.e., where } |q(x)| = 1 \text{ and } y(x) = 0\}.
 \end{aligned}
 \end{equation*}
 However, unfortunately, so far we have neither been able to show that \eqref{eq:etaqae} holds quasi everywhere, nor to establish a 
 counterexample which demonstrates that this is wrong in general. This question gives rise to future research.
\end{remark}

\section{Bouligand stationarity}\label{sec:bouli}

With the differentiability result of Theorem \ref{thm:ablvi} at hand, it is now straightforward to establish 
first-order optimality conditions in purely primal form for optimization problems governed by \eqref{eq:vi}. 
To be more precise, we consider an optimization problem of the form
\begin{equation}\label{eq:optcontrol}
 \left.
 \begin{aligned}
  \min \quad & J(y,u)\\
  \text{s.t.} \quad &
  \dual{A y}{v-y} + \int_\Omega |v|\,dx - \int_\Omega |y|\,dx \geq \dual{u}{v-y}
  \quad \forall \, v\in V\\
  \text{and}\quad & u \in \Uad,
 \end{aligned}
 \qquad \right\}
\end{equation}
where $\Uad\subset L^p(\Omega)$, $p > \max\{d/2,1\}$,  is nonempty, closed, and convex.

As shown in \cite[Lemma 3.9]{HerzogMeyerWachsmuth}, weak convergence of the difference quotient associated with the 
control-to-state mapping $S: u \mapsto y$ is sufficient to prove that the reduced objective, defined by
\begin{equation*}
 j: L^p(\Omega) \to \R, \quad j(u) := J(S(u),u),
\end{equation*}
is directionally differentiable. This allows us to formulate the following purely primal optimality conditions, 
which, in case of optimal control of VIs of first kind, are known as Bouligand stationarity conditions.

\begin{theorem}\label{thm:bstat}
 Let $p > \max\{d/2,1\}$ and assume that $J$ is Fr\'echet-differentiable from $V\times L^p(\Omega)$ to $\R$. 
 Suppose moreover that $\bar u \in \Uad$ is a local optimal solution of \eqref{eq:optcontrol}, such that 
 $\bar y = S(\bar u)$ and the associated slack variable $\bar q$ satisfy Assumptions \ref{assu:ycont} and \ref{assu:active}.  
 Then the following primal stationarity conditions are fulfilled:
 \begin{equation}\label{eq:noc1}
  \partial_y J(\bar y, \bar u) \eta + \partial_u J(\bar y, \bar u)(u - \bar u) \geq 0
  \quad \forall \, u \in \Uad,
 \end{equation}
 where $\eta\in V$ solves \eqref{eq:ablvi} with $\KK(y) = \KK(\bar y)$ and $h = u - \bar u$.
\end{theorem}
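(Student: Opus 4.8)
The plan is to derive \eqref{eq:noc1} as a variational inequality for the directional derivative of the reduced objective $j(u) := J(S(u), u)$, exploiting the local optimality of $\bar u$ together with the convexity of $\Uad$. Fix an arbitrary $u \in \Uad$ and set $h := u - \bar u$. By convexity of $\Uad$, the convex combinations $\bar u + t\,h = (1-t)\,\bar u + t\,u$ belong to $\Uad$ for every $t \in (0,1]$ and converge to $\bar u$ in $L^p(\Omega)$ as $t \searrow 0$. Hence for all sufficiently small $t>0$ they lie in the neighborhood of $\bar u$ on which $\bar u$ is optimal, so that $j(\bar u + t\,h) \geq j(\bar u)$. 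Dividing by $t>0$ and passing to the limit $t \searrow 0$ then yields $j'(\bar u; h) \geq 0$, once the directional differentiability of $j$ is established.

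The main work is therefore to show that $j$ is directionally differentiable at $\bar u$ and to identify its derivative. I would combine the Fr\'echet differentiability of $J$ with the weak directional differentiability of $S$ provided by Theorem \ref{thm:ablvi}. Writing $y_t := S(\bar u + t\,h)$ and $\bar y := S(\bar u)$, a Taylor expansion of $J$ around $(\bar y, \bar u)$ gives
\begin{equation*}
 \frac{j(\bar u + t\,h) - j(\bar u)}{t}
 = \partial_y J(\bar y, \bar u)\,\frac{y_t - \bar y}{t}
 + \partial_u J(\bar y, \bar u)\,h + \frac{r_t}{t},
\end{equation*}
where $r_t$ collects the higher-order terms. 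By Theorem \ref{thm:ablvi} the difference quotient $(y_t - \bar y)/t$ converges weakly in $V$ to $\eta = S_w'(\bar u; h)$, the unique solution of \eqref{eq:ablvi} with $h = u - \bar u$. Since $\partial_y J(\bar y, \bar u)$ is a bounded linear functional on $V$, weak convergence suffices to pass to the limit in the first term, yielding $\partial_y J(\bar y, \bar u)\,\eta$. The remainder $r_t/t$ vanishes because the Lipschitz continuity of $S$ (Lemma \ref{lem:inftylip}) ensures $\|y_t - \bar y\|_V = \OO(t)$, so that $r_t = o(t)$. This is precisely the mechanism behind \cite[Lemma 3.9]{HerzogMeyerWachsmuth}, which I would invoke to make the argument rigorous. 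Consequently,
\begin{equation*}
 j'(\bar u; h) = \partial_y J(\bar y, \bar u)\,\eta + \partial_u J(\bar y, \bar u)\,h.
\end{equation*}

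Combining the two steps with $h = u - \bar u$ gives exactly \eqref{eq:noc1}, and since $u \in \Uad$ was arbitrary, the inequality holds for all $u \in \Uad$. The only delicate point is the passage to the limit in the $\partial_y J$ term, where the difference quotient converges merely \emph{weakly} in $V$: here it is essential that $\partial_y J(\bar y, \bar u)$ acts linearly and continuously on $V$, so that no strong convergence of the difference quotient is required. I expect this to be the conceptual crux, all remaining steps being routine once Theorem \ref{thm:ablvi} and the Fr\'echet differentiability of $J$ are in place.
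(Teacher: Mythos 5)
Your proof is correct and takes essentially the same route as the paper, which likewise combines \cite[Lemma 3.9]{HerzogMeyerWachsmuth} with Theorem \ref{thm:ablvi} to get the chain-rule formula $j'(\bar u;h)=\partial_y J(\bar y,\bar u)S_w'(\bar u;h)+\partial_u J(\bar y,\bar u)h$ and then invokes local optimality along the feasible directions $h=u-\bar u$ afforded by the convexity of $\Uad$. One small citation slip: the bound $\|y_t-\bar y\|_V=\OO(t)$ needed for the remainder estimate comes from the $V^*\to V$ Lipschitz continuity of $S$ in Lemma \ref{lem:lipschitz}, not from the $L^p\to L^\infty$ result of Lemma \ref{lem:inftylip}.
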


\begin{proof}
 As mentioned above, \cite[Lemma 3.9]{HerzogMeyerWachsmuth} and Theorem \ref{thm:ablvi} imply that $u \mapsto j(u)$ 
 is directionally differentiable in every direction $h \in L^p(\Omega)$ with directional derivative 
 $j'(\bar u;h) = \partial_y J(\bar y, \bar u)S_w'(\bar u;h) + \partial_u J(\bar y, \bar u)h$. 
 Local optimality of $\bar u$ yields $j'(\bar u; u - \bar u)\geq 0$, which is the assertion.
\end{proof}

Next we derive a variant of the above optimality condition based on the cone tangent to the admissble set of \eqref{eq:optcontrol}. 
As a result, we obtain an optimality condition which can be interpreted as the counterpart of the implicit programming 
approach in the discussion of finite dimensional MPECs, see \cite[Section 3.3]{LuoPangRalph}. 
Note that such similarities have already been observed in \cite{HerzogMeyerWachsmuth}.

\begin{lemma}\label{lem:convrabl}
 Assume that $\bar u\in L^p(\Omega)$, $p > \max\{d/2,1\}$, is such that Assumptions \ref{assu:ycont} and \ref{assu:active} 
 are fulfilled. Suppose moreover that the sequences $\{u_n\}\subset L^p(\Omega)$ and $\{t_n\}\subset \R^+$ satisfy
 \begin{equation*}
  t_n \searrow 0, \quad \frac{u_n - \bar u}{t_n} \weak h \quad \text{in } L^p(\Omega).
 \end{equation*}
 Then 
 \begin{equation*}
  \frac{S(u_n) - S(\bar u)}{t_n} \weak S'_w(\bar u;h) \quad \text{in } V.
 \end{equation*}
\end{lemma}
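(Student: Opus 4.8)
The plan is to show that the conclusion of Lemma~\ref{lem:convrabl} follows by essentially rerunning the analysis that led to Theorem~\ref{thm:ablvi}, but now with the perturbation direction varying along the sequence instead of being held fixed. I would begin by setting $y_n := S(u_n)$ and letting $q_n$ denote the associated slack variable, so that $(y_n - \bar y)/t_n$ and $(q_n - q)/t_n$ play exactly the roles of the difference quotients in the preceding section. The first step is to establish boundedness: testing the VI for $y_n$ with $\bar y$ and the VI for $\bar y$ with $y_n$, adding, and using coercivity of $A$ exactly as in Lemma~\ref{lem:lipschitz} yields
\begin{equation*}
 \alpha\,\Big\|\frac{y_n - \bar y}{t_n}\Big\|_V^2 \leq \Bigdual{\frac{u_n - \bar u}{t_n}}{\frac{y_n - \bar y}{t_n}}.
\end{equation*}
Since $(u_n - \bar u)/t_n \weak h$ in $L^p(\Omega)$ it is bounded there, hence bounded in $V^*$, so the right-hand side is controlled and $(y_n-\bar y)/t_n$ is bounded in $V$. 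Extracting a weakly convergent subsequence gives a limit $\tilde\eta \in V$, and correspondingly $(q_n - q)/t_n \weak \tilde\lambda = h - A\tilde\eta$ in $V^*$, just as in the derivation of \eqref{eq:weakV} and the line defining $\lambda$.

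The second step is to verify that $\tilde\eta$ satisfies the same VI \eqref{eq:ablvi} that characterizes $S_w'(\bar u;h)$. Here I would recheck that each auxiliary lemma of Section~3 survives the replacement of the constant direction $h$ by the sequence $(u_n-\bar u)/t_n$. The feasibility relations \eqref{eq:etaAs}--\eqref{eq:etaB} in Lemma~\ref{lem:etafeas} use only the sign of $(q_n - q)\,\bar y \leq 0$ coming from the slackness condition, which is insensitive to the direction, so $\tilde\eta \in \KK(\bar y)$ carries over. Lemma~\ref{lem:etalambdanull}, giving $\dual{\tilde\lambda}{\tilde\eta}\geq 0$, relies on the complementarity inequalities for $q,q_n$ together with weak lower semicontinuity of $w\mapsto\dual{Aw}{w}$; the only place the direction enters is through the term $\dual{h}{\cdot}$, and since $(u_n-\bar u)/t_n \weak h$ while $(y_n-\bar y)/t_n \weak \tilde\eta$, the relevant product $\langle (u_n-\bar u)/t_n,\,(y_n-\bar y)/t_n\rangle$ still has the right liminf/limsup behaviour, giving $\dual{\tilde\lambda}{\tilde\eta}\geq 0$ as before. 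The sign condition $\dual{\tilde\lambda}{v}\leq 0$ for $v\in\KK(\bar y)$ from Lemma~\ref{lem:signlambda}, and the crucial vanishing $\dual{\tilde\lambda}{v}=0$ for $v=0$ on $\AA$ from Lemma~\ref{lem:lambdanull}, depend on Assumptions~\ref{assu:ycont} and \ref{assu:active}, which are hypotheses here, and on the $L^\infty$-convergence $\|y_n - \bar y\|_{L^\infty}\to 0$ used to localize the support of $(q_n-q)/t_n$ away from $\AA$; this $L^\infty$-convergence follows from Lemma~\ref{lem:inftylip} together with the $L^p$-boundedness of $u_n - \bar u = t_n\cdot(u_n-\bar u)/t_n$, so it is preserved. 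Combining these, the identity $A\tilde\eta + \tilde\lambda = h$ yields $\dual{A\tilde\eta}{v-\tilde\eta} - \dual{h}{v-\tilde\eta} = \dual{\tilde\lambda}{\tilde\eta} - \dual{\tilde\lambda}{v}\geq 0$ for all $v\in\KK(\bar y)$, exactly \eqref{eq:ablvi}.

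The final step is uniqueness and passage to the full sequence. Since $A$ is coercive and $\KK(\bar y)$ is nonempty, closed, and convex, the VI \eqref{eq:ablvi} has a unique solution, namely $S_w'(\bar u;h)$ by definition. Thus every weakly convergent subsequence of $(y_n-\bar y)/t_n$ has the same limit $S_w'(\bar u;h)$, and the standard subsequence argument upgrades this to weak convergence of the whole sequence. I expect the main obstacle to be precisely the bookkeeping in step two: confirming that the Stampacchia-based $L^\infty$ estimate and the capacity/localization arguments of Lemmas~\ref{lem:lambdanull} and \ref{lem:signlambda} genuinely go through verbatim when $h$ is replaced by the varying quotients $(u_n-\bar u)/t_n$. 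The key technical point making this work is that those arguments only ever invoke the $L^p$-boundedness of the perturbation (to get uniform $L^\infty$-smallness of $y_n - \bar y$) and the weak $V^*$-limit of $(q_n-q)/t_n$, both of which are available under the weaker hypothesis $(u_n-\bar u)/t_n \weak h$ rather than strong convergence; no step secretly requires $u_n - \bar u = t_n h$ exactly.
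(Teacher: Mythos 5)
Your proposal is correct, but it takes a genuinely different and considerably longer route than the paper. The paper's proof is a three-line reduction: write
\begin{equation*}
 \frac{S(u_n) - S(\bar u)}{t_n}
 = \frac{S(u_n) - S(\bar u + t_n h)}{t_n} + \frac{S(\bar u + t_n h) - S(\bar u)}{t_n},
\end{equation*}
observe that the second addend converges weakly to $S_w'(\bar u;h)$ by Theorem \ref{thm:ablvi} applied with the \emph{fixed} direction $h$, and kill the first addend in norm via the global Lipschitz continuity of $S\colon V^* \to V$ from Lemma \ref{lem:lipschitz}, since $\|(u_n - \bar u)/t_n - h\|_{V^*} \to 0$ by the compactness of the embedding $L^p(\Omega) \embed V^*$ (weak $L^p$-convergence becomes strong $V^*$-convergence). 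No subsequence extraction, no uniqueness argument, and no re-examination of the Section~3 machinery is needed. You instead rerun the entire limit analysis with the varying directions $h_n := (u_n - \bar u)/t_n$, and your bookkeeping is in fact sound: the slackness-based steps (Lemmas \ref{lem:etafeas} and \ref{lem:signlambda}) never see the direction; the product $\dual{h_n}{(y_n - \bar y)/t_n}$ in Lemma \ref{lem:etalambdanull} converges because $h_n \to h$ strongly in $V^*$ paired against weak convergence in $V$ --- the same compact-embedding fact the paper exploits; and the Stampacchia/capacity arguments of Lemmas \ref{lem:omega_rho} and \ref{lem:lambdanull} only require the uniform $L^p$-bound $\sup_n \|h_n\|_{L^p(\Omega)} < \infty$, so $\|y_n - \bar y\|_{L^\infty(\Omega)} \leq K t_n \|h_n\|_{L^p(\Omega)} \to 0$ still localizes $(q_n - q)/t_n$ away from the active set. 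What your route buys is a slightly stronger statement in spirit --- a direct Hadamard-type differentiability showing Theorem \ref{thm:ablvi} is stable under weakly convergent directions --- at the cost of reverifying every delicate lemma; what the paper's add-and-subtract trick buys is that the already-proven theorem plus one Lipschitz estimate does all of that work for free. Had you noticed that Lemma \ref{lem:lipschitz} holds for arbitrary $V^*$-inhomogeneities, you could have collapsed your second step entirely.
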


\begin{proof}
 By adding a zero we obtain
 \begin{equation*}
 \begin{aligned}
  \frac{S(u_n) - S(\bar u)}{t_n} 
  = \frac{S(u_n) - S(\bar u + t_n\, h)}{t_n} + \frac{S(\bar u + t_n\, h) - S(\bar u)}{t_n}.
 \end{aligned}
 \end{equation*}
 While the latter addend converges weakly to $S'_w(\bar u;h)$ by Theorem \ref{thm:ablvi}, 
 the Lipschitz continuity of $S$ by Lemma \ref{lem:lipschitz} yields for the first addend that
 \begin{equation*}
  \Big\|\frac{S(u_n) - S(\bar u + t_n\, h)}{t_n}\Big\|_V
  \leq L\,\Big\|\frac{u_n - \bar u}{t_n} - h\Big\|_{V^*} \to 0,
 \end{equation*}
 where we used the compactness of the embedding $L^p(\Omega) \embed V^*$.
\end{proof}

We define the tangent cone to the admissible set of \eqref{eq:optcontrol} as follows:

\begin{definition}[Tangent cone]
 For given $u \in \Uad$ we define the tangent cone at $u$ by
 \begin{equation*}
  \TT(u) := 
  \begin{aligned}[t]
   &\Big\{(\eta,h) \subset V \times L^p(\Omega):
   \exists \; \{u_n\}_{n\in\N} \subset \Uad, \, \{t_n\} \subset \R^+ \text{ such that }\\
   &\qquad\qquad \frac{u_n - u}{t_n} \weak h \text{ in } L^p(\Omega) \quad\text{and}\quad
   \frac{S(u_n) - S(u)}{t_n} \weak \eta \text{ in } V\Big\}. 
  \end{aligned}   
 \end{equation*}
\end{definition}

Since the VI in \eqref{eq:optcontrol} is uniquely solvable such that $y$ is determined by $u$, this cone coincides with 
the standard tangent cone in finite dimensions, except that we replace strong by weak convergence.
Next consider the VI in \eqref{eq:ablvi} associated with the directional derivative of $S$ at $\bar u$. 
Due to the coercivity of $A$, this VI does clearly not only admit a unique solution for right hand sides in $L^p(\Omega)$, 
but also for inhomogeneities in $V^*$. We denote the associated solution operator by $G: V^* \to V$, i.e.
\begin{equation}\label{eq:Gdef}
 \eta = G(h) \quad :\Longleftrightarrow\quad  \eta \in \KK(\bar y), \quad
 \dual{A \eta}{v - \eta} \geq \dual{h}{v - \eta} \quad \forall\,v\in \KK(\bar y).
\end{equation}
Furthermore, owing again to the coercivity of $A$ this operator is Lipschitz continuous, i.e.
\begin{equation}\label{eq:Glip}
 \|G(h_1) - G(h_2)\|_V \leq \frac{1}{\alpha}\, \|h_1 - h_2\|_{V^*} \quad \forall\, h_1, h_2\in V^*,
\end{equation}
where $\alpha$ is the coercivity constant of $A$. This enables us to show the following

\begin{theorem}
 Suppose that the assumptions of Theorem \ref{thm:bstat} are fulfilled with a local optimum $\bar u\in \Uad$ of \eqref{eq:optcontrol}. 
 Then there holds
 \begin{equation}\label{eq:noc2}
  \partial_y J(\bar y, \bar u) \eta + \partial_u J(\bar y, \bar u)h \geq 0
  \quad \forall \, (\eta,h) \in \TT(\bar u).
 \end{equation}
\end{theorem}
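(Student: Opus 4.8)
The plan is to test local optimality along the very sequences that define the tangent cone and then pass to the limit using the Fr\'echet differentiability of $J$. First I would fix an arbitrary pair $(\eta,h)\in\TT(\bar u)$ and choose sequences $\{u_n\}\subset\Uad$, $\{t_n\}\subset\R^+$ with $t_n\searrow 0$ realizing the defining convergences, i.e.
\[
 \frac{u_n-\bar u}{t_n}\weak h \text{ in } L^p(\Omega),
 \qquad
 \frac{S(u_n)-S(\bar u)}{t_n}\weak \eta \text{ in } V.
\]
Writing $y_n:=S(u_n)$ and $\bar y:=S(\bar u)$, I would first record that both difference quotients, being weakly convergent, are bounded, so that $\|u_n-\bar u\|_{L^p(\Omega)}=\OO(t_n)$ and $\|y_n-\bar y\|_V=\OO(t_n)$; in particular $u_n\to\bar u$ in $L^p(\Omega)$ and $y_n\to\bar y$ in $V$. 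Since $\bar u$ is a local minimizer and $u_n\in\Uad$ with $u_n\to\bar u$, local optimality yields $J(y_n,u_n)\geq J(\bar y,\bar u)$ for all sufficiently large $n$.

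Next I would Taylor-expand $J$ about $(\bar y,\bar u)$. Fr\'echet differentiability gives
\[
 J(y_n,u_n)-J(\bar y,\bar u)
 = \partial_y J(\bar y,\bar u)(y_n-\bar y) + \partial_u J(\bar y,\bar u)(u_n-\bar u) + r_n,
\]
with a remainder satisfying $|r_n|\leq \varepsilon_n\big(\|y_n-\bar y\|_V+\|u_n-\bar u\|_{L^p(\Omega)}\big)$ and $\varepsilon_n\to 0$, since $(y_n,u_n)\to(\bar y,\bar u)$. Dividing the optimality inequality by $t_n>0$ produces
\[
 0 \leq \partial_y J(\bar y,\bar u)\frac{y_n-\bar y}{t_n}
 + \partial_u J(\bar y,\bar u)\frac{u_n-\bar u}{t_n}
 + \frac{r_n}{t_n}.
\]
I would then pass to the limit $n\to\infty$. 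As $\partial_y J(\bar y,\bar u)\in V^*$ and $\partial_u J(\bar y,\bar u)\in L^p(\Omega)^*$ are continuous linear functionals, the two weak convergences force the first two terms to converge to $\partial_y J(\bar y,\bar u)\eta$ and $\partial_u J(\bar y,\bar u)h$, respectively. For the remainder I would invoke the $\OO(t_n)$ bounds to estimate $|r_n|/t_n \leq \varepsilon_n\big(\|y_n-\bar y\|_V+\|u_n-\bar u\|_{L^p(\Omega)}\big)/t_n \leq c\,\varepsilon_n \to 0$. Hence $0\leq \partial_y J(\bar y,\bar u)\eta+\partial_u J(\bar y,\bar u)h$, and since $(\eta,h)\in\TT(\bar u)$ was arbitrary, the claim \eqref{eq:noc2} follows.

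The only genuinely delicate point is the control of the Fr\'echet remainder $r_n/t_n$: one must observe that the \emph{weak} convergence of the difference quotients already forces $\|y_n-\bar y\|_V$ and $\|u_n-\bar u\|_{L^p(\Omega)}$ to be of order $t_n$, so that the little-$o$ term survives division by $t_n$ and still vanishes. Beyond this, no structural hypotheses on the active set are used in the final step itself; Assumptions \ref{assu:ycont} and \ref{assu:active} enter only indirectly, through Lemma \ref{lem:convrabl} and Theorem \ref{thm:ablvi}, to guarantee that $\TT(\bar u)$ is nonempty and tied to the weak directional derivative $S_w'$.
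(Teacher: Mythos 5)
Your proof is correct, but it takes a genuinely different route from the paper's. You argue directly from first principles: along the sequences defining a tangent pair $(\eta,h)$, the $\OO(t_n)$ bounds furnished by the (necessarily bounded) weakly convergent difference quotients give $u_n \to \bar u$ in $L^p(\Omega)$ and $y_n \to \bar y$ in $V$, so local optimality yields $J(y_n,u_n)\geq J(\bar y,\bar u)$, and a Taylor expansion of $J$ -- with the Fr\'echet remainder controlled by $\abs{r_n}/t_n \leq c\,\varepsilon_n \to 0$, which you correctly flag as the delicate step -- produces \eqref{eq:noc2} in the limit. The paper instead derives the result as a consequence of the already-proved Bouligand condition: it applies Theorem \ref{thm:bstat} to the admissible directions $(u_n-\bar u)/t_n$ (using positive homogeneity of $S_w'(\bar u;\cdot)$), shows via the Lipschitz estimate \eqref{eq:Glip} and the compact embedding $L^p(\Omega)\embed V^*$ that $h\mapsto S_w'(\bar u;h)$ is completely continuous, and invokes Lemma \ref{lem:convrabl} to identify $\eta = S_w'(\bar u;h)$ before passing to the limit in \eqref{eq:gradineq}. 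Your approach buys elementarity and generality: the limiting argument itself never uses Theorem \ref{thm:ablvi} nor Assumptions \ref{assu:ycont} and \ref{assu:active} -- as you note, these enter only to guarantee that $\TT(\bar u)$ contains the nontrivial pairs $(S_w'(\bar u;h),h)$ -- so the inequality as you prove it holds for arbitrary tangent pairs, with no link between $\eta$ and $h$ required. The paper's route buys structural information your proof does not produce: under the assumptions, every $(\eta,h)\in\TT(\bar u)$ necessarily satisfies $\eta=S_w'(\bar u;h)$ (by Lemma \ref{lem:convrabl} and uniqueness of weak limits), tying the tangent cone to the derivative VI \eqref{eq:Gdef}, which is what gives \eqref{eq:noc2} its implicit-programming interpretation. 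One shared caveat: both arguments require $t_n\searrow 0$, which the printed definition of $\TT(u)$ omits but which you state explicitly and the paper assumes implicitly through its appeal to Lemma \ref{lem:convrabl}.
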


\begin{proof}
 If $h_n \weak h$ in $L^p(\Omega)$ and consequently $h_n \to h$ in $V^*$, then \eqref{eq:Glip} gives $G(h_n) \to G(h)$ in $V$.
 Since $G(h) = S_w'(\bar u; h)$ for $h\in L^p(\Omega)$, this implies that $L^p(\Omega) \ni h \mapsto S'_w(\bar u;h) \in V$ 
 is completely continuous.
 Now let $(\eta,h) \in \TT(\bar u)$ be arbitrary. Hence there is $\{u_n\}\in\Uad$ so that $(u_n - \bar u)/t_n \weak h$ in $L^p(\Omega)$. 
 As seen above, $S_w'(\bar u; .)$ is the solution operator of a VI of first kind with the cone $\KK(\bar y)$ as feasible set. 
 Hence, $S_w'(\bar u; .)$ is positively homogeneous such that Theorem \ref{thm:bstat} yields
 \begin{equation}\label{eq:gradineq}
  \partial_y J(\bar y, \bar u) S_w'\Big(\bar u;\frac{u_n - \bar u}{t_n}\Big) 
  + \partial_u J(\bar y, \bar u)\Big(\frac{u_n - \bar u}{t_n}\Big) \geq 0.
 \end{equation}   
 The complete continuity of $S_w'(\bar u; .)$ together with Lemma \ref{lem:convrabl} implies
 \begin{equation*}
  S_w'\Big(\bar u;\frac{u_n - \bar u}{t_n}\Big) \to S_w'(\bar u; h) = \eta \quad \text{in } V.
 \end{equation*}
 Due to the weak continuity of $\partial_u J(\bar y, \bar u)$ the second addend in \eqref{eq:gradineq} converges to 
 $\partial_u J(\bar y, \bar u)h$, which completes the proof.
\end{proof}

\section{Strong stationarity}

In this section we aim at deriving optimality conditions which, in contrast to the ones presented in Section \ref{sec:bouli},
also involve dual variables. Given the differentiability result and the Bouligand stationarity conditions in Theorem \ref{thm:bstat}, 
we can follow the lines of \cite{MignotPuel1984}. For this purpose we have to require the following assumptions concerning the 
quantities in the optimal control problem \ref{eq:optcontrol}:

\begin{assumption}\label{assu:strong1}
 We suppose that $U_{\textup{ad}} = L^2(\Omega)$. Moreover, $J$ is continously Fr\'echet-differentiable from $V\times L^2(\Omega)$ 
 to $\R$.
\end{assumption}

In order to be able to utilize our differentiability result we furthermore assume the following:

\begin{assumption}\label{assu:strong2}
 Assume that $\bar u$ is a local optimum such that the associated state $\bar y$ and the associated slack variable $\bar q$ 
 satisfy Assumptions \ref{assu:ycont} and \ref{assu:active}. 
\end{assumption}

\begin{lemma}\label{lem:bstatV}
 Under Assumptions \ref{assu:strong1} and \ref{assu:strong2} there exists a $\bar p\in V$ such that
 \begin{equation*}
  \partial_y J(\bar y, \bar u)G(h) - \dual{\bar p}{h} \geq 0 \quad \forall\, h\in V^*
 \end{equation*}
 with $G$ as defined in \eqref{eq:Gdef}.
\end{lemma}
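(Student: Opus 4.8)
The plan is to adapt the method of \cite{MignotPuel1984}, converting the purely primal Bouligand condition of Theorem~\ref{thm:bstat} into a statement involving a dual variable $\bar p\in V$. Since Assumption~\ref{assu:strong1} stipulates $\Uad = L^2(\Omega)$, every $h\in L^2(\Omega)$ is an admissible direction, so Theorem~\ref{thm:bstat} applied with $u = \bar u + h$ gives
\begin{equation*}
 \partial_y J(\bar y,\bar u)\,G(h) + \partial_u J(\bar y,\bar u)\,h \geq 0 \quad \forall\, h\in L^2(\Omega).
\end{equation*}
In other words, the positively homogeneous functional $\phi(h) := \partial_y J(\bar y,\bar u)\,G(h)$ admits the linear minorant $h\mapsto -\partial_u J(\bar y,\bar u)\,h$ on $L^2(\Omega)$. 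By \eqref{eq:Glip} and $\partial_y J(\bar y,\bar u)\in V^*$, the map $\phi$ is Lipschitz on $V^*$, so once a candidate $\bar p\in V$ is found, the asserted inequality is continuous in $h$ and need only be checked on the dense subspace $L^2(\Omega)\subset V^*$. The genuine content is therefore a \emph{regularity lift}: in finite dimensions one could simply take $\bar p = -\partial_u J$, whereas here $-\partial_u J(\bar y,\bar u)$ is in general only an element of $L^2(\Omega)$, i.e.\ a functional continuous on $L^2(\Omega)$ but not on the larger space $V^*$, and we must instead produce a minorant represented by some $\bar p\in V$.

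To construct $\bar p$ I would exploit the conical structure of $\KK(\bar y)$ in \eqref{eq:KKae}. Let $\mathcal S := \{v\in V : v = 0 \text{ a.e.\ in } \AA\}$ be its lineality space, i.e.\ the largest linear subspace contained in $\KK(\bar y)$ (if $v,-v\in\KK(\bar y)$ then $v=0$ on $\AA_s$ and, since $|q|=1$ on $\BB$, also $v=0$ on $\BB$). I would then define $\bar p\in\mathcal S$ as the unique solution -- by Lax--Milgram, using the coercivity of $A$ on the closed subspace $\mathcal S$ -- of the adjoint equation $\dual{A v}{\bar p} = \partial_y J(\bar y,\bar u)\,v$ for all $v\in\mathcal S$, which yields $\bar p\in V$. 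Because $\bar p$ lies in the lineality space, $\eta\pm\bar p\in\KK(\bar y)$ for every $\eta\in\KK(\bar y)$, so testing \eqref{eq:Gdef} with $v=\eta+\bar p$ and $v=\eta-\bar p$ produces the key identity $\dual{h}{\bar p} = \dual{A\,G(h)}{\bar p}$ for all $h\in V^*$ (here $\dual{h}{\bar p}$ is the duality pairing written $\dual{\bar p}{h}$ in the statement). Combining this with the surjectivity of $G$ onto $\KK(\bar y)$ -- note that $G(A\eta)=\eta$ for every $\eta\in\KK(\bar y)$, since then \eqref{eq:Gdef} holds trivially with equality -- reduces the claim to the single dual-cone inequality
\begin{equation*}
 \partial_y J(\bar y,\bar u)\,\eta \geq \dual{A\eta}{\bar p} \quad \forall\, \eta\in\KK(\bar y).
\end{equation*}

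The crux, and the step I expect to be the main obstacle, is to establish this last inequality, equivalently that $r := \partial_y J(\bar y,\bar u) - A^*\bar p$ lies in the dual cone of $\KK(\bar y)$. By construction $r$ annihilates $\mathcal S$, so on the inactive and strongly active parts the inequality holds with equality, and only the biactive set $\BB$ -- where $\KK(\bar y)$ is genuinely conical and $G$ is nonlinear -- remains to be controlled. Here local optimality must enter: I would insert into the Bouligand inequality of the first paragraph test directions $h$ whose weak derivatives $\eta = G(h)$ probe the admissible directions generated by $\BB^+$ and $\BB^-$ (using the sign structure $v(x)q(x)\geq 0$) and read off the sign of $r$ on $\BB$. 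Making this rigorous in the function-space setting -- ensuring the constructed directions are admissible and that the relevant quantities survive the passage to the limit under only weak convergence of the difference quotients -- is the delicate point, and it is precisely where Assumptions~\ref{assu:strong2}, \ref{assu:ycont}, and \ref{assu:active} (continuity of $\bar y$ and $\bar q$ and the structural hypotheses on the active and biactive sets) are needed; should the lineality-space adjoint prove insufficient, one augments $\bar p$ by a component supported on $\BB$ carrying the sign information obtained from $-\partial_u J(\bar y,\bar u)$.
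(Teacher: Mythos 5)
There is a genuine gap, and it sits precisely where you deferred the work: your first paragraph already contains every ingredient of the paper's actual proof, which you then abandoned in favor of a construction that stalls at its decisive step. The paper's argument is short: since $0\in\KK(\bar y)$ gives $G(0)=0$, the Lipschitz estimate \eqref{eq:Glip} yields $\norm{G(h)}_V \leq \alpha^{-1}\norm{h}_{V^*}$, and because $\Uad = L^2(\Omega)$ is a \emph{linear} space, the Bouligand inequality of Theorem \ref{thm:bstat} may be applied to both $h$ and $-h$, giving
\begin{equation*}
 \abs{\partial_u J(\bar y,\bar u)h} \;\leq\; \tfrac{1}{\alpha}\,\norm{\partial_y J(\bar y,\bar u)}_{V^*}\,\norm{h}_{V^*}
 \quad \forall\, h\in L^2(\Omega).
\end{equation*}
So the linear functional $-\partial_u J(\bar y,\bar u)$, a priori only $L^2$-continuous, is in fact bounded in the $V^*$-norm on the dense subspace $L^2(\Omega)\subset V^*$; by Hahn--Banach it extends to a bounded functional on $V^*$, represented (reflexivity of $V$) by some $\bar p\in V$ with $\dual{\bar p}{h} = -\partial_u J(\bar y,\bar u)h$ on $L^2(\Omega)$. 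The asserted inequality for all $h\in V^*$ then follows by exactly the density-and-continuity observation you made yourself. In short: a linear functional minorizing a positively homogeneous, norm-bounded functional on a linear space is itself norm-bounded --- your ``regularity lift'' is a two-line consequence of \eqref{eq:Glip}, not a separate construction problem. Note also that the specific identity $\bar p = -\partial_u J(\bar y,\bar u)$ on $L^2(\Omega)$ is what later produces the gradient equation \eqref{eq:gradeq}, so the intended $\bar p$ carries precisely the $\partial_u J$ information your candidate discards.

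Your alternative route cannot be completed as sketched. The reductions in your second paragraph are individually correct (the lineality space is indeed $\{v\in V: v=0 \text{ a.e.\ in }\AA\}$, the two-sided testing gives $\dual{\bar p}{h}=\dual{A\,G(h)}{\bar p}$, and $G(A\eta)=\eta$ gives surjectivity onto $\KK(\bar y)$), but the residual claim --- that $r=\partial_y J(\bar y,\bar u)-A^*\bar p$ is nonnegative on $\KK(\bar y)$ for your lineality-space adjoint $\bar p$ --- is essentially the adjoint half of the strong stationarity system \eqref{eq:adjoint}--\eqref{eq:compl}, which the paper derives \emph{from} this lemma; your route is therefore at risk of circularity. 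Concretely, your $\bar p$ is built from $\partial_y J$ and the geometry alone, with no reference to $\partial_u J$, yet the only optimality information available is the Bouligand inequality, in which the biactive-set sign information is encoded through $-\partial_u J(\bar y,\bar u)h$; since $\partial_u J$ is merely $L^2$-continuous while your identity converts everything into $V^*$-pairings against $\bar p$, comparing the two requires a $V$-representative of $-\partial_u J$ --- which is exactly the paper's $\bar p$, i.e., the thing to be constructed. Your closing fallback (``augment $\bar p$ by a component supported on $\BB$ carrying the sign information obtained from $-\partial_u J$'') names the missing content rather than supplying it, and is moreover problematic in function space, where a $V$-function ``supported on $\BB$'' need not make sense. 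Finally, insisting on $\bar p$ in the lineality space (vanishing on all of $\AA$, including $\BB$) trades away exactly the signed degrees of freedom on $\BB$ that the correct multiplier, known only to lie in the cone $\KK(\bar y)$ of \eqref{eq:KKae}, generally uses.
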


\begin{proof}
 By Theorem \ref{thm:bstat} and $S'_w(\bar u;h) = G(h)$ for $h\in L^2(\Omega)$, there holds
 \begin{equation}\label{eq:gradvi}
  \partial_y J(\bar y,\bar u)G(h) + \partial_u J(\bar y,\bar u)h \geq 0 \quad \forall\, h\in L^2(\Omega).
 \end{equation}
 which, together with \eqref{eq:Glip}, gives in turn
 \begin{equation*}
  \partial_u J(\bar y,\bar u)h \leq \|\partial_y J(\bar y,\bar u)\|_{V^*} \,\frac{1}{\alpha}\, \|h\|_{V^*} 
  \quad \forall\, h\in L^2(\Omega).
 \end{equation*}  
 Therefore, by the Hahn-Banach theorem, the linear functional $\partial_u J(\bar y, \bar u): L^2(\Omega) \to \R$ 
 can be extended to a linear and bounded functional on $V^*$, which we identify with a function $\bar p\in V$, i.e.
 \begin{equation*}
  \dual{\bar p}{h} = - \partial_u J(\bar y, \bar u) h \quad \forall\, h \in L^2(\Omega).
 \end{equation*}
 The density of $L^2(\Omega) \embed V^*$ in combination with \eqref{eq:gradvi} then gives the assertion.
\end{proof}

Next define $q\in V$ as solution of 
\begin{equation*}
 \dual{A^* q}{v} = \dual{\partial_y J(\bar y,\bar u)}{v} \quad \forall\, v\in V,
\end{equation*}
which is well defined because of the coercivity of $A$. Furthermore, we introduce the operator $\Pi: V \to \KK(\bar y)$ by
\begin{equation*}
 \Pi := G \circ A.
\end{equation*}
Note that $\Pi$ can be interpreted as $A$-projection on $\KK(\bar y)$. It is straightforward to see the following properties of $\Pi$:
\begin{equation}\label{eq:idem}
\begin{aligned}
 & \text{$\Pi$ as well as $I - \Pi$ are idempotent,}\\
 & \Pi \circ (I-\Pi) = (I-\Pi) \circ \Pi = 0,
\end{aligned}
\end{equation}
and, as $\KK(\bar y)$ is a convex cone, 
\begin{equation}\label{eq:QPsenkrecht}
 \dual{A(I-\Pi)\xi}{\Pi(\xi)} = 0 \quad \forall\, \xi \in V.
\end{equation}
Moreover by construction, we find $G = \Pi \circ A^{-1}$. Thus Lemma \ref{lem:bstatV} implies for every $h \in V^*$ that
\begin{equation}\label{eq:gradineqmod}
\begin{aligned}
 0 &\leq \partial_y J(\bar y,\bar u)G(h) - \dual{\bar p}{h}\\
 &= \dual{G(h)}{A^*q} - \dual{A A^{-1} h}{\bar p}\\
 &= \dual{A\Pi(A^{-1}h)}{q - \bar p} - \dual{A(I - \Pi)(A^{-1}h)}{\bar p}\\
 &= 
 \begin{aligned}[t]
  & \dual{\Pi(A^{-1} h)}{A^*(q - \bar p)}\\ 
  & - \dual{A(I - \Pi)(A^{-1} h)}{\Pi(\bar p)} - \dual{A(I - \Pi)(A^{-1} h)}{(I-\Pi)(\bar p)}.
 \end{aligned}
\end{aligned}
\end{equation}
If we insert $h = A(I-\Pi)\bar p \in V^*$, then \eqref{eq:idem} and \eqref{eq:QPsenkrecht} yield
\begin{equation*}
 \dual{A(I-\Pi)\bar p}{(I-\Pi)\bar p} \leq 0.
\end{equation*}
The coercivity of $A$ then implies $\bar p = \Pi(\bar p)$ and thus $\bar p\in \KK(\bar y)$, i.e.
\begin{equation*}
\begin{aligned}
 \bar p(x) &= 0 & & \text{a.e., where } |\bar q(x)| < 1,\\ 
 \bar p(x)\bar q(x) &\geq 0 & & \text{a.e., where } |\bar q(x)| = 1 \text{ and } \bar y(x) = 0.
\end{aligned} 
\end{equation*}
Next we define $z\in V$ by 
\begin{equation}\label{eq:zgl}
 \dual{A z}{v} = \dual{v}{A^*(\bar p - q)}\quad \forall\, v\in V
\end{equation}
and insert $h = A\Pi(z)\in V$ in \eqref{eq:gradineqmod}. Together with \eqref{eq:idem}, \eqref{eq:zgl}, and \eqref{eq:QPsenkrecht}, 
we obtain in this way
\begin{equation*}
 0\leq \dual{\Pi(z)}{A^*(q - \bar p)} = -\dual{Az}{\Pi(z)} = -\dual{A\Pi(z)}{\Pi(z)}
\end{equation*}
so that $\Pi(z) = G(Az)= 0$ by the coercivity of $A$. Consequently the definition of $G$ in \eqref{eq:Gdef} leads to 
\begin{equation*}
 \dual{A z}{v} \leq 0 \quad \forall\, v\in \KK(\bar y)
 \quad \Longrightarrow \quad 
 \dual{A^*\bar p}{v} \leq \dual{A^* q}{v} = \dual{\partial_y J(\bar y,\bar u)}{v} \quad \forall\, v\in \KK(\bar y).
\end{equation*}
By defining $\bar\mu := g'(\bar y) - A^*\bar p \in V^*$ we therefore arrive at
\begin{equation*}
\begin{aligned}
 A^* \bar p &= \partial_y J(\bar y,\bar u) - \mu \quad \text{in } V^*\\
 \dual{\bar\mu}{v} &\geq 0 \quad \forall\, v\in \KK(\bar y).
\end{aligned}
\end{equation*}
All in all we have thus proven the following:

\begin{theorem}
 Assume that Assumption \ref{assu:strong1} holds. Suppose moreover that $\bar u$ is a local optimum which satisfies 
 Assumption \ref{assu:strong2}. Then there exists an adjoint state $\bar p\in V$ and a multiplier $\mu \in V^*$ such that 
 the following \emph{strong stationarity system} is fulfilled:
 \begin{subequations}\label{eq:strongstat}
 \begin{gather}
  A \bar y + \bar q = \bar u \quad \text{in } V^* \label{eq:state1}\\
  \bar q(x) \,\bar y(x) = |\bar y(x)|, \quad |\bar q(x)| \leq 1\quad \text{a.e.\ in }\Omega \label{eq:state2}\\[1mm]
  A^* \bar p = \partial_y J(\bar y,\bar u) - \mu \quad \text{in } V^* \label{eq:adjoint}\\
  \bar p\in \KK(\bar y),\quad \dual{\bar\mu}{v} \geq 0 \quad \forall\, v\in \KK(\bar y) \label{eq:compl}\\[1mm]
  \bar p + \partial_u J(\bar y,\bar u) = 0\label{eq:gradeq}
 \end{gather} 
 \end{subequations}
 with $\KK(\bar y)$ as defined in \eqref{eq:KKae}.
\end{theorem}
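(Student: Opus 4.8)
The plan is to upgrade the purely primal Bouligand condition of Theorem \ref{thm:bstat} to a dual system by following the projection technique of Mignot and Puel. The natural starting point is Lemma \ref{lem:bstatV}, which already furnishes an adjoint state $\bar p\in V$ with $\partial_y J(\bar y,\bar u)G(h) - \dual{\bar p}{h}\geq 0$ for all $h\in V^*$ and, by its very construction, the gradient equation $\bar p + \partial_u J(\bar y,\bar u) = 0$, i.e.\ \eqref{eq:gradeq}. The state equation \eqref{eq:state1} and the slackness \eqref{eq:state2} are nothing but the complementarity reformulation of the governing inequality from Lemma \ref{lem:slack}. Hence the only genuine work is to extract the multiplier $\bar\mu$ together with the sign conditions \eqref{eq:adjoint}--\eqref{eq:compl}.

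First I would introduce the auxiliary state $q\in V$ via $A^*q = \partial_y J(\bar y,\bar u)$ and the $A$-projection $\Pi := G\circ A$ onto the cone $\KK(\bar y)$. Because $\KK(\bar y)$ is a closed convex cone, $\Pi$ and $I-\Pi$ are idempotent and $A$-orthogonal in the sense of \eqref{eq:idem} and \eqref{eq:QPsenkrecht}; these identities are the algebraic engine of the whole argument. Writing $G = \Pi\circ A^{-1}$, I would expand the inequality of Lemma \ref{lem:bstatV} into the three-term form \eqref{eq:gradineqmod}, thereby separating the tangential component $\Pi(A^{-1}h)$ from the normal component $(I-\Pi)(A^{-1}h)$.

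The heart of the proof is then two judicious choices of test direction. Inserting $h = A(I-\Pi)\bar p$ collapses \eqref{eq:gradineqmod}, via idempotency and orthogonality, to $\dual{A(I-\Pi)\bar p}{(I-\Pi)\bar p}\leq 0$; coercivity of $A$ forces $(I-\Pi)\bar p = 0$, i.e.\ $\bar p = \Pi(\bar p)\in\KK(\bar y)$, which is the membership assertion in \eqref{eq:compl}. Next, defining $z$ through \eqref{eq:zgl} and inserting $h = A\Pi(z)$ yields $\dual{A\Pi(z)}{\Pi(z)}\leq 0$, hence $\Pi(z) = G(Az) = 0$ again by coercivity. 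Unravelling the VI that defines $G$ at the argument $Az$ then gives $\dual{Az}{v}\leq 0$ for every $v\in\KK(\bar y)$, which, upon substituting \eqref{eq:zgl}, becomes $\dual{A^*\bar p}{v}\leq \dual{\partial_y J(\bar y,\bar u)}{v}$ on $\KK(\bar y)$. Setting $\bar\mu := \partial_y J(\bar y,\bar u) - A^*\bar p\in V^*$ turns this into the adjoint equation \eqref{eq:adjoint} together with $\dual{\bar\mu}{v}\geq 0$ for all $v\in\KK(\bar y)$, completing \eqref{eq:compl}.

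I expect the main obstacle to be the correct bookkeeping of the projection identities rather than any deep analytic difficulty: one must verify that the chosen perturbations $A(I-\Pi)\bar p$ and $A\Pi(z)$ indeed lie in $V^*$ (they do, since $A\colon V\to V^*$ and both $(I-\Pi)\bar p$ and $\Pi(z)$ belong to $V$), and that every cancellation in \eqref{eq:gradineqmod} is licensed by \eqref{eq:idem}--\eqref{eq:QPsenkrecht}. The one step requiring real care is the passage from $\Pi(z)=0$ to the cone inequality $\dual{Az}{v}\leq 0$: this is precisely the statement that $Az$ lies in the polar cone of $\KK(\bar y)$, which I would read off directly from the defining relation \eqref{eq:Gdef} of $G$ evaluated with $\eta = G(Az) = 0$.
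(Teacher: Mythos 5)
Your proposal is correct and reproduces the paper's own argument essentially verbatim: the same Hahn--Banach adjoint $\bar p$ from Lemma \ref{lem:bstatV}, the same auxiliary state $q$ and $A$-projection $\Pi = G\circ A$ with the identities \eqref{eq:idem}--\eqref{eq:QPsenkrecht}, the same two test directions $h = A(I-\Pi)\bar p$ and $h = A\Pi(z)$, and the same reading of $\Pi(z)=G(Az)=0$ in \eqref{eq:Gdef} to place $Az$ in the polar cone and define $\bar\mu := \partial_y J(\bar y,\bar u) - A^*\bar p$. Nothing is missing, and no genuinely different route is taken.
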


\begin{remark}
 A comparable result for optimal control problems governed by VIs of the first kind is known as strong stationarity conditions, 
 see \cite{hintermuller2009mathematical}. This is why we have chosen the same terminology here.  
\end{remark}

\begin{remark}
 We compare the optimality system \eqref{eq:strongstat} with results from \cite{Delosreyes2009} obtained via regularization and 
 subsequent limit analysis. The optimality system obtained in \cite{Delosreyes2009} coincides with \eqref{eq:strongstat} 
 except that \eqref{eq:compl} is replaced by 
 \begin{equation}\label{eq:cstat}
  \dual{\bar\mu}{\bar p} \geq 0, \quad \dual{\bar\mu}{\bar y} = 0.
 \end{equation}
 However, thanks to the definition of $\KK(\bar y)$ and $\pm\bar y \in \KK(\bar y)$, these relations are an immediate 
 consequence of \eqref{eq:compl}. The optimality system in \eqref{eq:strongstat} is therefore sharper compared 
 to the one obtained via regularization. 
 We point out however that the analysis in \cite{Delosreyes2009} does not require the restrictive Assumptions \ref{assu:active} 
 and \ref{assu:ycont} and in addition applies to more general VIs of the second kind.
\end{remark}

\section{An inexact trust-region algorithm}
In this section we propose an inexact trust-region algorithm for the solution of the finite-dimensional optimization problem:
\begin{align}
\min ~& J(y,u)\\
\text{subject to: }& \langle Ay,v-y \rangle +g|v|_1- g|y|_1 \geq \langle u, v-y\rangle,  \text{ for all } v \in \mathbb R^n,
\end{align}
with $g>0$. The main difficulty of the method consists in computing a descent direction along which the algorithm has to perform the next step.
In the case of an empty biactive set, the derivative information is given by \eqref{eq:etanull}-\eqref{eq:rableq}. From the latter, existence of an adjoint state can be proved and an adjoint calculus may be performed. 

Since the information so obtained does not necessarily correspond to an element of the subdifferential, in case of a non-empty biactive set, we apply a trust-region scheme to provide robust iterates. In this context the adjoint related gradient is considered as an inexact version of a descent direction. Since in the applications we focus on, the biactive set is either empty or very small, such an approach is justified from the numerical point of view.

Indeed, by assuming that the biactive set $$B=\{ i: y_i=0, |q_i|=1 \},$$ is empty, the solution operator is G\^ateaux differentiable and the directional derivative $\eta=S'(u)h$ corresponds to the solution of the following system of equations:
\begin{align*}
\eta_i=0 &\text{ for }i:y_i=0,\\
\sum_{j:y_j \not =0} A_{i,j} \eta_j =h_i &\text{ for }i:y_i \not =0.
\end{align*}

To simplify the description of the algorithm, 
we confine ourselves to a quadratic cost functional of the form $J(y,u) = 1/2\, \|y - z\|^2 + \alpha/2\, \|u\|^2$, where $\|\,.\,\|$ denotes the 
Euclidian norm and $z\in \R^n$ is a given desired state.
Considering the reduced cost functional $$j(u)= \frac{1}{2} \|S(u)-z\|^2 + \frac{\alpha}{2} \|u\|^2,$$ the directional derivative is given by $$j'(u)h= (S(u)-z, S'(u)h)+ \alpha (u,h)= \sum_i (y_i-z_i)\eta_i + \alpha \sum_i u_i h_i.$$

Let us recall that the inactive set is given by $\mathcal I :=\{ i \in \{1, \dots, n \}: y_i \not =0 \}$. By reordering the indices such that the active and inactive ones occur in consecutive order, and defining the adjoint state $p \in \mathbb R^n$ as the solution to the system:
$$\begin{pmatrix}
I &0\\ 0 & A_{\mathcal I}^T
\end{pmatrix} p = y-z,
$$
where $A_{\mathcal I}$ corresponds to the block of $A$ with indexes $i, j$ such that $y_i \not = 0,y_j \not = 0$, we obtain that
$$j'(u)h= \sum_{i \in \mathcal I}p_i h_i + \alpha \sum_i u_i h_i$$ or, equivalently, $j'(u)=\begin{cases}
\alpha u_i & \text{ if } i \not \in \mathcal I\\ p_i+ \alpha u_i & \text{ if } i \in \mathcal I.
\end{cases}
$

Before stating the trust-region algorithm, let us introduce some notation to be used. The quadratic model of the reduced cost function is given by $$q_k(s)=j(u_k)+ g_k^T s+ \frac{1}{2} s^T H_k s,$$
where $g_k=j'(u_k)$ and $H_k$ is a matrix with second order information, obtained with the BFGS method. The trust region radius is denoted by $\Delta_k$ and the actual and predicted reductions are given by
$$ared_k(s^k):=j(u_k)-j(u_k+s^k) \text{ and }pred_k(s^k)=j(u_k)-q_k(s^k), \text{ respectively.}$$ The quality indicator is computed by $$\rho_k(s^k)=\frac{ared_k(s^k)}{pred_k(s^k)}.$$

The resulting trust region algorithm (of dogleg type) is given through the following steps:
\paragraph{\textbf{Trust region algorithm}}
\begin{enumerate}
\item Choose the parameter values $0<\eta_1<\eta_2<1$, $0<\gamma_0<\gamma_1<1<\gamma_2$, $\Delta_{min}\geq0.$
\item Choose the initial iterate $x_0\in\mathbb{R}^n$ and the trust region radius $\Delta_0>0,\ \Delta_0\geq\Delta_{min}\geq0.$
\item Compute the Cauchy step $s_c^k=-t^* g_k,$ where 
$$t^*=\left\{
\begin{matrix}
\displaystyle\frac{\Delta_k}{||g_k||},\ \ \text{ if }\ g_k^\top H_kg_k\leq0\\
\\
\min\left(\displaystyle\frac{||g_k||^2}{g_k^\top H_kg_k},\displaystyle\frac{\Delta_k}{||g_k||}\right),\ \ \text{ if }\ g_k^\top H_k g_k>0
\end{matrix}
\right.$$
and the Newton step $s^k_n= -H_k^{-1} g_k$. If $s^k_n$ satisfies the fraction of Cauchy decrease: 
$$\exists \delta \in (0,1] \text{ and } \beta \geq 1 \text{ such that }\|s^k\| \leq \beta \Delta_k \text{ and }pred_k(s^k) \geq \delta ~pred_k(s_c^k).$$
then $s^k=s^k_n$, else $s^k=s^k_c$.
\item If $\varrho_k(s^k)>\eta_2$, then
\[
u_{k+1}=u_k+s_k,\ \ \ \Delta_{k+1}\in\left[\Delta_k,\gamma_2\Delta_k\right]
\]
Else if $\varrho_k(s^k)\in(\eta_1,\eta_2)$, then
\[
u_{k+1}=u_k+s_k,\ \ \ \Delta_{k+1}\in\left[\max(\Delta_{min},\gamma_1\Delta_k),\Delta_k\right]
\]
Else if $\varrho_k(s^k)\leq\eta_1$, then
\[
u_{k+1}=u_k,\ \ \ \Delta_{k+1}\in\left[\gamma_0\Delta_k,\gamma_1\Delta_k\right]
\]
Repeat until stopping criteria.
\end{enumerate}

\subsection{Example}
We consider as test example the following finite-dimensional optimization problem:
\begin{align}
\min ~& J(y,u)=\frac{1}{2} \|y-z\|^2 + \frac{\alpha}{2} \|u\|^2\\
\text{subject to: }& \langle Ay,v-y \rangle +g|v|_1- g|y|_1 \geq \langle u, v-y\rangle,  \text{ for all } v \in \mathbb R^n, \label{numerics VI}
\end{align}
where $A$ corresponds to the finite differences discretization matrix of the negative Laplace operator in the two dimensional domain $\Omega=]0,1[^2$, $z =10 \sin(5 x_1) \cos(4 x_2)$ stands for the desired state and $\alpha$ and $g$ are positive constants. It is expected that as $g$ becomes larger the solution becomes sparser.

For solving \eqref{numerics VI} within the trust region algorithm a semismooth Newton method is used. The method is built upon a huberization of the $l_1$ norm and the use of dual information. Specifically,
we consider the solution of the regularized inequality:
\begin{align}
& Ay +q=u\\
& q-h_\gamma(y)=0,
\end{align}
where $\left( h_\gamma(y) \right)_i= g\frac{\gamma y_i}{\max(g, \gamma |y_i|)}$. Considering a generalized derivative of the max function, the following system has to be solved in each semismooth Newton iteration:
\begin{align}
& A \delta_y +\delta_q=u-Ay -q\\
& \delta_q-\frac{\gamma \delta_y}{\max(g,\gamma|y|)}+ \diag(\chi_{\mathcal I_\gamma}) \frac{\gamma^2 y^T \delta_y}{\max(g,\gamma|y|)^2} \frac{y}{|y|}=-q+h_\gamma(y), \label{eq:ssn2}
\end{align}
where $\left( \chi_{\mathcal I_\gamma} \right)_i:=\begin{cases}1 &\text{ if }\gamma |y_i| \geq g,\\ 0 &\text{ if not.} \end{cases}$, $\max(g,\gamma |y|):=\left( \max(g,\gamma |y_1|),\dots,\max(g,\gamma |y_n|) \right)^T$ and the division is to be understood componentwise. By using dual information in the iteration matrix (as in \cite{HintermuellerStadler2007}, \cite{dlRe2010}) the following modified version of \eqref{eq:ssn2} is obtained:
\begin{equation}
\delta_q-\frac{\gamma \delta_y}{\max(g,\gamma|y|)}+ \diag(\chi_{\mathcal I_\gamma}) \frac{\gamma^2 y^T \delta_y}{\max(g,\gamma|y|)^2} \frac{q}{\max(g,|q|)}=-q+h_\gamma(y). \label{eq:ssn2.1}
\end{equation}
This leads to a globally convergent iterative algorithm, which converges locally with superlinear rate.

The used trust region parameter values are $\eta_1=0.25$, $\eta_2=0.75$, $\gamma_1=0.5$, $\gamma_2=1.5$ and $\beta=1$. For the parameter values $\alpha=0.0001$ and $g=15$, and the mesh size step $h=1/80$, the algorithm requires a total number of 35 iterations to converge, for a stopping criteria given by $\|u_{k+1}-u_k\| \leq 1e-4$. The optimized state is shown in Figure \ref{Bsp1E}, where a large zone where the state takes value zero can be observed.
\begin{figure}[ht!]
\centering
\includegraphics[height=6cm]{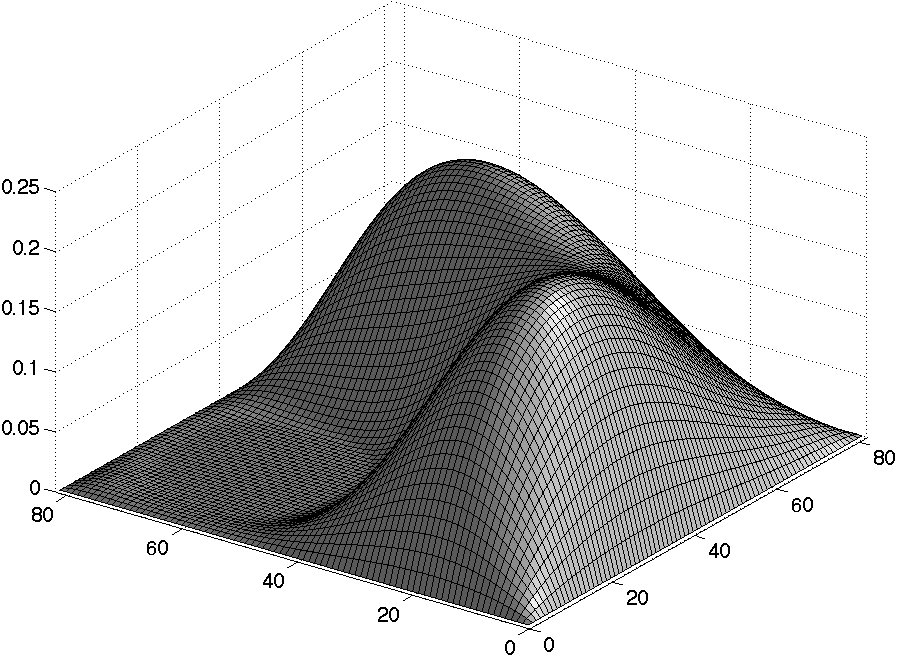}
\caption{Optimized state: on the left corner the sparse structure of the solution can be observed.}\label{Bsp1E}
\end{figure}

The algorithm was also tested for other values of the parameters $\alpha$ and $g$, yielding the convergence behaviour registered in Table \ref{table1}. Although the considered derivative information was inexact, the trust-region approach yields convergence in a relatively small number of iterations.
\begin{table}
\centering
\begin{tabular}{|l|r|r|r|r|}
\backslashbox{$\alpha$}{$g$}	&1	&5	&10	&15\\ \hline
0,1	&20	&28	&53	&- \\
0,01	 &23 	&24	&27	&32\\
0,001	&33	&48	&54	&31\\
0,0001	&69	&70	&62	&34
\end{tabular} \caption{Number of trust-region iterations for different $\alpha$ and $g$ values. Mesh size step $h=1/40$.}\label{table1}
\end{table} 

Further descent type directions to be used in the context of the trust-region methodology, as well as the convergence theory of the combined approach, will be investigated in future work.

\begin{appendix}

\section{Directional derivative of the $L^1$-norm}\label{sec:l1deriv}

\begin{proofof}{Lemma \ref{lem:l1deriv}}
 The definition of $\AA$ and $\absop'$ imply
 \begin{equation}\label{eq:l1deriv1}
 \begin{aligned}
  & \Big|\int_\Omega \Big(\frac{|y_n| - |y|}{t_n} - \absop'(y;\eta)\Big) \varphi\, dx\Big|\\
  &\quad \leq \Big|\int_\Omega \frac{|y_n| - |y + t_n\eta|}{t_n} \, \varphi\, dx\Big|
  + \underbrace{\Big|\int_{\AA} \Big(\frac{|y + t_n\eta| - |y|}{t_n} - |\eta|\Big) \varphi\, dx\Big|}_{\displaystyle{= 0}}\\
  &\qquad + \Big|\int_{\Omega\setminus\AA} \Big(\frac{|y + t_n \eta| - |y|}{t_n} - \sign(y)\eta\Big) \varphi\, dx\Big|.
 \end{aligned}
 \end{equation}
 By the compact embedding $H^1(\Omega)\embed\embed L^2(\Omega)$ we have 
 \begin{equation*}
  \frac{y_n - y}{t_n} \to \eta \text{ in } L^2(\Omega)
 \end{equation*}
 and thus
 \begin{equation}\label{eq:l1deriv2}
 \begin{aligned}
  \Big|\int_\Omega \frac{|y_n| - |y + t_n\eta|}{t_n} \, \varphi\, dx\Big|
  &\leq \int_\Omega \Big|\frac{y_n - y}{t_n} + \eta\Big| \, |\varphi|\, dx\\
  &\leq \Big\|\frac{y_n - y}{t_n} + \eta\Big\|_{L^1(\Omega)} \, \|\varphi\|_{L^\infty(\Omega)} \to 0.
 \end{aligned}
 \end{equation}
 Let $x \in \Omega\setminus\AA$ be an arbitrary common Lebesgue point of $y$ and $\eta$. 
 Then the directional differentiability of $\R \ni r \mapsto |r| \in \R$ yields
 \begin{equation*}
  \frac{|y(x) + t_n \eta(x)| - |y(x)|}{t_n} - \sign\big(y(x)\big)\eta(x) \to 0,
 \end{equation*}
 and, since almost all points in $\Omega$ are common Lebesgue points of $y$ and $\eta$, this pointwise convergence 
 holds almost everywhere in $\Omega\setminus\AA$. 
 Due to
 \begin{equation*}
  - 2 |\eta(x)|\leq \frac{|y(x) + t_n \eta(x)| - |y(x)|}{t_n} - \sign\big(y(x)\big)\eta(x) 
  \leq 2 |\eta(x)| \quad \text{a.e.\ in }\Omega,
 \end{equation*}
 Lebesgue dominated convergence theorem thus gives 
 \begin{equation*}
  \frac{|y + t_n \eta| - |y|}{t_n} - \sign(y)\eta \to 0 \text{ in } L^1(\Omega\setminus\AA).
 \end{equation*}  
 Therefore, we arrive at
 \begin{multline}\label{eq:l1deriv3}
  \Big|\int_{\Omega\setminus\AA} \Big(\frac{|y + t_n \eta| - |y|}{t_n} - \sign(y)\eta\Big) \varphi\, dx\Big|\\
  \leq \Big\|\frac{|y + t_n \eta| - |y|}{t_n} - \sign(y)\eta\Big\|_{L^1(\Omega\setminus\AA)} \, \|\varphi\|_{L^\infty(\Omega)} 
  \to 0.
 \end{multline}
 Inserting \eqref{eq:l1deriv2} and \eqref{eq:l1deriv3} in \eqref{eq:l1deriv1} yields the assertion.
\end{proofof}

\section{Boundedness for functions in $H^1(\Omega)$}\label{sec:stam}

For convenience of the reader, we prove Lemma \ref{lem:stam}. The arguments are classical and go back to \cite{Stampacchia/Kinderlehrer}.

\begin{proofof}{Lemma \ref{lem:stam}}
The truncated function defined in \eqref{eq:truncfunc} is equivalent to
\begin{equation*}
 w_k(x) = w(x) - \min\big((\max(w(x),-k),k\big)
\end{equation*}
and therefore \cite[Theorem A.1]{Stampacchia/Kinderlehrer} implies $w_k \in V$.

It remains to verify the $L^\infty$-bound in \eqref{eq:inftybound}. 
If $d=1$, then the assertion follows directly from \eqref{eq:stamest} and the Sobolev embedding $H^1(\Omega) \embed L^\infty(\Omega)$.

So assume that $d\geq 2$.
Then let $k \geq 0$ be given and set $A(k) :=\{x\in \Omega\;|\;|w(x)|\geq k\}$. Note that $w_k(x) = 0$ a.e.\ in $\Omega\setminus A(k)$.
Next let $h \geq k$ be arbitrary so that $w(x) \geq h \geq k$ a.e.\ in $A(h)$. 
Then Sobolev embeddings give that
\begin{equation}\label{eq:stam1}
\begin{aligned}
 \|w_k\|_{H^1(\Omega)}^2 \geq c\, \|w_k\|_{L^m(\Omega)}^2 &= c\Big(\int_{A(k)} \big| |w| - k \big|^m dx\Big)^{2/m}\\
 &\geq c\int_{A(h)}(h-k)^m dx^{2/m} =  c\,(h-k)^2 |A(h)|^{2/m},
\end{aligned}
\end{equation}
where $m = 2d/(d-2)$, see e.g.\ ... On the other hand, \eqref{eq:stamest} implies
\begin{equation*}
 \alpha\,\|w_k\|^2 \leq \int_{A(k)} f\,w_k\,dx \leq \|f\|_{L^{m'}(A(k))} \, \|w_k\|_{L^m(A(k))} 
 \leq c\,\|f\|_{L^{m'}(A(k))} \, \|w_k\|_{H^1(\Omega)},
\end{equation*}
where $m'$ is the conjugate exponent to $m$, i.e.\ $1/m + 1/m' = 1$. Note that 
\begin{equation*}
 m' = \frac{m}{m-1} = \frac{d}{d/2 + 1} \leq \frac{d}{2} < p, \quad \text{if } d \geq 2,
\end{equation*}
and thus $f\in L^{m'}(\Omega)$ by the assumption on $f$ in Lemma \ref{lem:stam}. 
Together with Young's inequality, then H\"older's inequality yields
\begin{equation}\label{eq:stam2}
 \|w_k\|^2 \leq c\Big(\int_{A(k)} |f|^{m'}\,dx\Big)^{2/m'} \leq c\,\|f\|_{L^p(\Omega)}^2\,|A(k)|^{2r/m'}
\end{equation}
with $r = p/(p-m') \geq 1$ so that $r' = r/(r-1) = p/m'$. By setting
\begin{equation}\label{eq:expo}
 s = \frac{m}{m'}\, r = \frac{p}{(m'-1)(p-m')}
\end{equation}
we infer from \eqref{eq:stam1} and \eqref{eq:stam2} that 
\begin{equation}\label{eq:stam3}
 |A(h)|^{2/m} \leq c\,\|f\|_{L^p(\Omega)}^2\,\frac{1}{(h-k)^2} \, \big(|A(h)|^{2/m}\big)^s\quad  \text{for all } h > k \geq 0.
\end{equation}
Since $m > 2$, we have $m'< 2$ and therefore $(m'-1)(p-m') < p-m' < p$ such that \eqref{eq:expo} gives in turn $s>1$. 
In this case, according to \cite[Lemma B.1]{Stampacchia/Kinderlehrer}, it follows from \eqref{eq:stam3} that the nonnegative and non-increasing 
function $\R\ni h \mapsto |A(h)|^{2/m} \in \R$ admits a zero at 
\begin{equation*}
 h^* = 2^{s/(s-1)} \sqrt{c \,|\Omega|^{2(s-1)/m}}\, \|f\|_{L^p(\Omega)}.
\end{equation*}
By definition, $|A(h^*)| = 0$ is equivalent to $|w(x)| \leq h^*$ a.e.\ in $\Omega$, which yields the assertion.
\end{proofof}

\end{appendix}

\section*{Acknowledgement}

The authors would like to thank Gerd Wachsmuth (TU Chemnitz) for his hint concerning strong stationarity.

This work was supported by a DFG grant within the Collaborative Research Center SFB 708 
(\emph{3D-Surface Engineering of Tools for Sheet Metal Forming – Manufacturing, Modeling, Machining}), 
which is gratefully acknowledged.

\bibliographystyle{plain}
\bibliography{biblio}
\end{document}